\documentclass[a4paper,10pt, reqno]{amsart}

\makeatletter
\@namedef{subjclassname@2020}{%
  \textup{2020} Mathematics Subject Classification}
\makeatother

\usepackage[
  margin=30mm,
  marginparwidth=25mm,     
  marginparsep=2mm,       
  bottom=25mm,
  ]{geometry}

\usepackage[bbgreekl]{mathbbol}
\usepackage{amsfonts}
\usepackage{latexsym,amssymb,amsthm,mathrsfs,amsmath,amscd,enumerate,enumitem, amscd,color}

\usepackage{tikz}

\DeclareSymbolFontAlphabet{\mathbb}{AMSb}
\DeclareSymbolFontAlphabet{\mathbbl}{bbold}

\newtheorem{thm}{Theorem}[section]
 
 \newtheorem{lem}[thm]{Lemma}
 
\theoremstyle{definition}

 \theoremstyle{remark}

\usepackage{hyperref}

\newcommand{\supp}{\mathop{\mathrm{supp}}}

\numberwithin{equation}{section}
\allowdisplaybreaks

\begin{document}


\title[]{The logarithmic p-Laplacian on hyperbolic spaces}
\author[J. J. Betancor]{J. J. Betancor}

\author[L. Rodr\'{\i}guez-Mesa] {L. Rodr\'{\i}guez-Mesa}

\address{
	Departamento de An\'alisis Matem\'atico, Universidad de La Laguna,\newline
	Campus de Anchieta, Avda. Astrof\'isico S\'anchez, s/n,\newline
	38721 La Laguna (Sta. Cruz de Tenerife), Spain}
\email{jbetanco@ull.es, lrguez@ull.edu.es}

\thanks{The authors are partially supported by the Grant PID2023-148028NB-I00 funded by MICIU/AEI/10.13039/501100011033 and by ERDF/EU”.
}

\subjclass[2020]{35R11, 47G20, 42B37, 47A60}

\keywords{Fractional $p$-Laplacian, logarithm p-Laplacian operator, hyperbolic space, extension problem}


\begin{abstract}
In this paper, the logarithmic $p$-Laplacian operator $\log (-\Delta_{\mathbb H ^n})_p$ on the hyperbolic space $\mathbb H^n$, with $n\geq 2$, is introduced. We prove that if $f$ is a locally Lipschitz function of exponent $\alpha \in (0,1)$ with compact support in $\mathbb H^n$, then, for a suitable constant  $A_{n,p}>0$,  
$$
\lim_{s\rightarrow 0^+}(-\Delta_{\mathbb H ^n})_p^sf(x)=A_{n,p}|f(x)|^{p-2}f(x),\quad x\in \mathbb H^n,
$$  
where $(-\Delta_{\mathbb H ^n})_p^s$ denotes the $s$-fractional $p$-Laplacian on $\mathbb H^n$. We establish a pointwise integral representation for the operator $\log (-\Delta_{\mathbb H ^n})_p=\frac{d}{ds}(-\Delta_{\mathbb H^n})_p^s\,_{|s=0}$. Furthermore, we show that $\log (-\Delta_{\mathbb H ^n})_p$ can be realized as the solution of a suitable extension problem and provide an extension theorem that yields the operator $\log (-\Delta)_p$ in $\mathbb R^n$. To the best of our knowledge, this property has not been established for the Euclidean logarithmic $p$-Laplacian $\log (-\Delta)_p$. 
\end{abstract}

\maketitle

\section{Introduction}
Recently the logarithmic of the Laplace operator $\log(-\Delta)$ in $\mathbb R^n$ has been studied in \cite{CW}. This operator is defined by
$$
\log (-\Delta )f=\frac{d}{ds}(-\Delta)^sf_{|s=0},
$$
where $(-\Delta)^s$ denotes the $s$-power of the Laplacian operator for every $s\in (0,1)$. The fractional Laplacian operator $(-\Delta)^s$ admits several equivalent definitions (\cite{KW}) (by using Fourier transformation, semigroup of operators and singular integrals, for instance). Caffarelli and Silvestre in their celebrated paper \cite{CS} obtained the fractional Laplacian operator $(-\Delta)^s$ as the solution of an extension problem. It is remarkable that the operators $(-\Delta)^s$, $s\in (0,1)$, and $\log(-\Delta)$ are nonlocal. As it is well-known nonlocal operators have emerged related to analysis, geometry, applied mathematics and probability (see \cite{JX, LPW, L, RS} and \cite{SV}).

A pointwise representation of $\log (-\Delta)$ was obtained in \cite[Theorem 1.1]{CW}. In \cite[Theorem 1.2]{CHW} it is proved that the operator $\log (-\Delta)$ can be obtained as the solution of an extension problem. 

For every $p>1$ the $p$-th Laplacian operator $(-\Delta)_p$ is defined by 
$$
(-\Delta)_pf=-{\rm div} (|\nabla f|^{p-2}\nabla f).
$$
It is clear that $(-\Delta)_2=-\Delta$. The fractional $p$-Laplace operator $(-\Delta)_p^s$, $s\in (0,1)$, is given by
$$
(-\Delta)_p^sf(x)=c_{n,s,p}{\rm P.V.}\int_{\mathbb R^n}\frac{|f(x)-f(y)|^{p-2}(f(x)-f(y))}{|x-y|^{n+sp}}dy,
$$
where P.V. means the Cauchy principal value of the integral. The constant $c_{n,s,p}$ is choosen such that we have the limits
$$
\lim_{s\rightarrow 0^+}(-\Delta)_p^sf(x)=|f(x)|^{p-2}f(x)\quad \mbox{ and }\quad \lim_{s\rightarrow 1^-}(-\Delta )_p^sf(x)=(-\Delta)_pf(x).
$$
Three representations of the fractional $p$-Laplacian were established in \cite[Theorems 2.1, 3.1, and 4.1]{TGV}.

In \cite{DJF} the logarithmic $p$-Laplace operator $\log (-\Delta )_p$ is defined as that operator for which the following expansion holds
$$
(-\Delta)_p^sf(x)=|f(x)|^{p-2}f(x)+s(\log (-\Delta)_p)f(x)+o(s),\quad s\rightarrow 0^+.
$$
A pointwise integral representation for $\log (-\Delta )_p$ was established in \cite[Theorem 1.1]{DJF}.

Banica, Gonz\'alez and S\'aez gave a definition of the fractional Laplacian on some noncompact manyfolds through an extension problem. They considered the real hyperbolic space $\mathbb H^n$, $n\geq 2$. This is the symplest example of Riemann symmetric spaces of noncompact type. In \cite{GSS} Gonz\'alez, S\'aez and Sire obtained layer solutions for the fractional Laplacian in hyperbolic settings. A reverse Faber-Krahn inequality for the $p$-Laplacian in hyperbolic spaces was proved in \cite[Theorem 1.4]{GV}. Motivated by the results in \cite{TGV}, J. Kim, M. Kim and Lee (\cite{KKL}) gave three equivalent definitions of the fractional $p$-Laplacian $(-\Delta_{\mathbb H^n})_p^s$, $s\in (0,1)$ and $p>1$, where $-\Delta_{\mathbb H^n}$ denotes the Laplacian in the space $\mathbb H^n$. 

Our objective in this paper is to define the logarithmic operator $\log (-\Delta_{\mathbb H^n})_p$ of the $p$-Laplacian $(-\Delta_{\mathbb H^n})_p$ with $p>1$ in the hyperbolic setting. We also obtain $\log(-\Delta_{\mathbb H^n})_p$ as the solution of an extension problem. 

There exist in the literature several models for the $n$-dimensional hyperbolic space $\mathbb H^n$. As in \cite{BGS} we consider $\mathbb H^n$ as the upper branch of the hyperboloid in $\mathbb R^{n+1}$ with the metric induced by the Lorentzian metric in $\mathbb R^{n+1}$ given by $-dx_0^2+dx_1^2+\cdots+dx_n^2$. More precisely, we consider
$$
\mathbb H^n=\big\{(x_0,x_1,\ldots, x_n)\in \mathbb R^{n+1}:\,x_0^2-x_1^2-\cdots -x_n^2=1,\,x_0>0\big\},
$$
and the metric $d_{\mathbb H^n}$ given by
$$
d_{\mathbb H^n}=d\rho ^2+(\sinh \rho )^2dw^2,
$$
where $dw^2$ defines the usual metric in the unit sphere $S^{n-1}$ in $\mathbb R^n$. The volume element is given by $(\sinh \rho )^{n-1}d\rho dw$. We denote  by $SO(1,n)$ the group of Lorentz transformations of $\mathbb R^{n+1}$ preserving the inner product $[\cdot,\cdot]$ defined by
$$
[x,z]=x_0z_0-x_1z_1-\cdots -x_nz_n,\quad x=(x_0,\cdots, x_n),\,z=(z_0,\cdots, z_n)\in \mathbb H^n.
$$
The hyperbolic space $\mathbb H^n$ is invariant under $SO(1,n)$.

The Laplace-Beltrami operator is defined by
$$
\Delta_{\mathbb H^n}=\frac{\partial ^2}{\partial \rho ^2}+(n-1)\frac{\cosh \rho }{\sinh \rho }\frac{\partial }{\partial \rho} +\frac{1}{(\sinh \rho )^2}\Delta_{S^{n-1}}.
$$
Let $s\in (0,1)$. The fractional $s$-power $(-\Delta_{\mathbb H^n})^s$ of $-\Delta_{\mathbb H^n}$ is given by
$$
(-\Delta_{\mathbb H^n})^sf(x)={\rm P.V.} \int_{\mathbb H^n}(f(z)-f(x))\mathcal K_{n,s}(d_{\mathbb H^n}(z,x))dz,
$$
when $f\in L^1(\mathbb H^n)\cap {\rm Lip}^\alpha (\mathbb H^n)$ for some $\alpha >2s$ (see \cite[Theorem 2.5]{BGS}). The integral kernel $\mathcal K_{n,s}$ is defined by (\cite[Definition 1.1]{KKL})
$$
\mathcal K_{n,s}(\rho )=sa_{n,s}\Big(-\frac{1}{\sinh \rho }\frac{\partial}{\partial \rho} \Big)^{\frac{n-1}{2}}\Big(\rho ^{-s-1/2}K_{s+1/2}\big(\frac{n-1}{2}\rho \big)\Big),\quad \rho >0,
$$
when $n\in \mathbb N$, $n\geq 3$ is odd, and
$$
\mathcal K_{n,s}(\rho )=\frac{sa_{n,s}}{\sqrt{\pi}}\int_\rho ^\infty \frac{\sinh r}{\sqrt{\cosh r-\cosh \rho }}\Big(-\frac{1}{\sinh r }\frac{\partial }{\partial r}\Big)^{\frac{n}{2}}\Big(r^{-s-1/2}K_{s+1/2}\big(\frac{n-1}{2}r\big)\Big)dr,\quad \rho >0,
$$
provided that $n\in \mathbb N$ is even. Here 
$$
a_{n,s}=\frac{(n-1)^{s+1/2}}{2^{\frac{n-1}{2}}\pi^{n/2}\Gamma (1-s)},\quad n\in \mathbb N,\;s\in (0,1),
$$
and $K_\nu$ is the modified Bessel function of second kind and order $\nu$.

In \cite{Ch} Chen obtained explicit expression for fractional and logarithmic operators on compact and noncompact complete Riemann manifolds. Note that the spectrum of the Laplace-Beltrami operator on compact Riemann manifolds is purely discrete in contrast with the noncompact case. The hyperbolic space, when modeled in the unit ball of $\mathbb R^n$, is a special case that has been studied in \cite[\S 4]{Ch}.

The fractional $p$-Laplacian on hyperbolic spaces has been studied in \cite{KKL}. Let $s\in (0,1)$ and $p>1$. The fractional $p$-Laplacian $(-\Delta_{\mathbb H^n})_p^s$ on $\mathbb H^n$ is defined by
$$
(-\Delta_{\mathbb H^n})_p^sf(x)={\rm P.V.} \int_{\mathbb H^n}\Phi _p(f(x)-f(y))\mathcal K_{n,s,p}(d_{\mathbb H^n}(x,y))dy,
$$
where $\Phi_p(z)=|z|^{p-2}z$, $z\in \mathbb R$. The integral kernel is given by
$$
\mathcal K_{n,s,p}(\rho )=sa_{n,s,p}\Big(-\frac{1}{\sinh \rho }\frac{\partial }{\partial \rho}\Big)^{\frac{n-1}{2}}\Big[\rho ^{-\frac{1+sp}{2}}K_{\frac{1+sp}{2}}\Big(\frac{n-1}{2}\rho \Big)\Big], \quad \rho >0,
$$
when $n\in \mathbb N$, $n\geq 3$ is odd, and
$$
\mathcal K_{n,s,p}(\rho )=\frac{sa_{n,s,p}}{\sqrt{\pi}}\int_\rho ^\infty \frac{\sinh r}{\sqrt{\cosh r-\cosh \rho }}\Big(-\frac{1}{\sinh r}\frac{\partial }{\partial r}\Big)^{\frac{n}{2}}\Big[r^{-\frac{1+sp}{2}}K_{\frac{1+sp}{2}}\Big(\frac{n-1}{2}r\Big)\Big]dr, \quad \rho >0,
$$
when $n\in \mathbb N$ is even, being
$$
a_{n,s,p}= \frac{p(n-1)^{\frac{1+sp}{2}}}{2^{s(p-2)+\frac{n+3}{2}}\pi ^{\frac{n-1}{2}}\Gamma (\frac{p+1}{2})\Gamma (1-s)}.
$$
Let us denote by $c_{n,p}$ the positive constant $c_{n,p}=a_{n,0,p}$. Note that, according to \cite[Corollary 3.3]{KKL} the kernel $\mathcal K_{n,s,p}$, $n\in \mathbb N$, $s\in (0,1)$, $p>1$,  is positive. Furthermore, by \cite[Proposition 1.2]{KKL} it holds that there exists $C>0$ such that, for each $s\in (0,1)$,
\begin{equation}\label{K0}
\frac{s\rho ^{-n-sp}}{C}\leq \mathcal K_{n,s,p}(\rho )\leq Cs\rho ^{-n-sp},\quad 0<\rho <1,
\end{equation}
and 
\begin{equation}\label{Kinfty}
\frac{s\rho ^{-1-\frac{sp}{2}}e^{-(n-1)\rho }}{C}\leq \mathcal K_{n,s,p}(\rho )\leq Cs\rho ^{-1-\frac{sp}{2}}e^{-(n-1)\rho },\quad\rho \geq 1.
\end{equation}

We consider the Poisson type kernel defined by
$$
P_n(\rho ,t)=\Big(-\frac{1}{\sinh \rho }\frac{\partial }{\partial \rho}\Big)^{\frac{n-1}{2}}\Big(\mathbb K_n(\sqrt{\rho ^2+t^2})\Big),\quad \rho >0,\,t\geq 0,
$$
when $n\in \mathbb N$, $n\geq 3$ is odd, and
$$
P_n(\rho ,t)=\int_\rho ^\infty \frac{\sinh r}{\sqrt{\cosh r-\cosh\rho }}\Big(-\frac{1}{\sinh r}\frac{\partial }{\partial r}\Big)^{\frac{n}{2}}\Big(\mathbb K_n(\sqrt{r^2+t^2})\Big)dr,\quad \rho >0,\,t\geq 0,
$$
when $n\in \mathbb N$, $n$ is even, where
$$
\mathbb K_n(u)=u^{-1/2}K_{\frac{1}{2}}\big(\frac{n-1}{2}u\big)=\Big(\frac{\pi}{n-1}\Big)^{1/2}\frac{e^{-\frac{n-1}{2}u}}{u},\quad u>0.
$$

Note that
\begin{equation}\label{limKP}
\lim_{s\rightarrow 0^+}\frac{\mathcal K_{n,s,p}(\rho )}{s}=\sigma_{n,p}P_n(\rho ,0),\quad \rho >0,
\end{equation}
where $\sigma_{n,p}=c_{n,p}$, when $n\in \mathbb N$ is odd, $n\geq 3$, and $\sigma_{n,p}=\pi ^{-1/2}c_{n,p}$, when $n\in \mathbb N$ is even.

The fractional $p$-Laplacian $(-\Delta_{\mathbb H^n})_p^s$ is also defined by using the heat semigroup associated with $\Delta_{\mathbb H^n}$ and as the solution of an extension problem in \cite{KKL}. These results can be seen as a version for $\mathbb H^n$ of those ones proved in \cite{TGV} for the Euclidean Laplacian.

In \cite[Theorem 1.5]{KKL} it is proved that if $n\in \mathbb N$, $n\geq 2$, and $p\geq 2$, then
$$
\lim_{s\rightarrow 1^-}(-\Delta_{\mathbb H^n})_p^sf(x)=(-\Delta_{\mathbb H^n})_pf(x),
$$
provided that $f\in C_b^2(\mathbb H^n)$ and $x\in \mathbb H^n$ such that $\nabla f(x)\not=0$. Here $C_b^2(\mathbb H^n)$ denotes the space that consists of all those $C^2$ and bounded functions in $\mathbb H^n$.

In order to define the logarithmic operator $\log (-\Delta_{\mathbb H^n})_p$ of $(-\Delta_{\mathbb H^n})_p$ we need to establish the following result. Let us denote by $C_c(\mathbb H^n)$ the space of complex functions continuous in $\mathbb H^n$ with compact support, and {by ${\rm Lip}_{\rm loc}^\alpha (\mathbb H^n)$, with $\alpha \in (0,1)$, the space of complex functions defined on $\mathbb H^n$ such that, for every $x\in \mathbb H^n$, and $r>0$,
$$
\sup_{y\in \mathbb H^n, d_{\mathbb H^n}(x,y)\leq r}\frac{|f(x)-f(y)|}{d_{\mathbb H^n}(x,y)^\alpha}<\infty.
$$
\begin{thm}\label{Th1.1}
Let $n\in \mathbb N$, $n\geq 2$, $1<p<\infty$, and $\alpha\in (0,1)$. There exists $A_{n,p}>0$ such that, for every $f\in C_c(\mathbb H^n)\cap {\rm Lip}_{\rm loc}^\alpha (\mathbb H^n)$,
$$
\lim_{s\rightarrow 0^+}(-\Delta_{\mathbb H^n})_p^sf(x)=A_{n,p}\Phi _p(f(x)),\quad x\in \mathbb H^n.
$$
\end{thm}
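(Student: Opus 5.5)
Fix $x\in\mathbb H^n$ and write $d=d_{\mathbb H^n}$. I would split the principal value integral at distance $1$:
$$
(-\Delta_{\mathbb H^n})^s_pf(x)=\int_{d(x,y)<1}\Phi_p(f(x)-f(y))\mathcal K_{n,s,p}(d(x,y))\,dy+\int_{d(x,y)\ge 1}\Phi_p(f(x)-f(y))\mathcal K_{n,s,p}(d(x,y))\,dy=:I_s+J_s .
$$
The plan is to show that $I_s\to 0$ and that $J_s\to A_{n,p}\Phi_p(f(x))$ as $s\to 0^+$. I would take $A_{n,p}$ to be $|S^{n-1}|\,\sigma_{n,p}\int_1^\infty P_n(\rho,0)(\sinh\rho)^{n-1}d\rho$. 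This number is positive and finite by \eqref{limKP} and the bounds \eqref{K0}, \eqref{Kinfty}.

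\emph{Local part.} Let $L$ be the local ${\rm Lip}^\alpha$ constant of $f$ at $x$ for radius $1$. Then $|\Phi_p(f(x)-f(y))|\le L^{p-1}d(x,y)^{\alpha(p-1)}$. By \eqref{K0}, for $s$ small enough that $sp<\alpha(p-1)$,
$$
|I_s|\le CsL^{p-1}|S^{n-1}|\int_0^1\rho^{\alpha(p-1)-n-sp}(\sinh\rho)^{n-1}d\rho\le C's\int_0^1\rho^{\alpha(p-1)-sp-1}d\rho=\frac{C's}{\alpha(p-1)-sp}\longrightarrow0 .
$$
This also shows that the principal value is an absolutely convergent integral for small $s$, so no cancellation argument is needed.

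\emph{Far part.} Let $\operatorname{supp}f\subset B(x,R)$ with $R\geq 1$. Split $J_s$ over $1\le d(x,y)\le R$ and $d(x,y)>R$.
\begin{itemize}
\item On the bounded annulus $1\le d(x,y)\le R$, the integrand divided by $s$ converges pointwise to $\sigma_{n,p}\Phi_p(f(x)-f(y))P_n(d(x,y),0)$ by \eqref{limKP}. Using \eqref{Kinfty}, $\mathcal K_{n,s,p}(\rho)/s\le C\rho^{-1}e^{-(n-1)\rho}$, so dominated convergence applies with the bounded function $f$.
\item On $d(x,y)>R$ we have $f(y)=0$, so this piece equals $\Phi_p(f(x))|S^{n-1}|\int_R^\infty\mathcal K_{n,s,p}(\rho)(\sinh\rho)^{n-1}d\rho$. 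The bound \eqref{Kinfty} combined with $(\sinh\rho)^{n-1}\le e^{(n-1)\rho}$ gives the dominating function $C\rho^{-1-sp/2}\le C\rho^{-1}$. This is not integrable at infinity, which is the delicate point.
\end{itemize}

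The resolution is that the whole far integral is of order $s\cdot\int_R^\infty\rho^{-1-sp/2}d\rho=s\cdot\frac{2R^{-sp/2}}{sp}\to \frac 2p$. So the non-integrable tail is exactly what produces the nonzero limit. On the bounded annulus the factor $s$ kills everything: that piece, and also $I_s$, tend to $0$.

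Hence the correct limit comes from the tail alone, and the definition of $A_{n,p}$ above must be replaced: $A_{n,p}$ should be the limit of $s\int_R^\infty\mathcal K_{n,s,p}(\rho)/s\,(\sinh\rho)^{n-1}d\rho\cdot|S^{n-1}|$, which must be shown independent of $R$. Independence is clear, since the integral over any bounded range is $O(s)$. The main obstacle is to show this limit exists and is positive; two-sided bounds like \eqref{Kinfty} only give that it lies in $[2/(Cp),2C/p]|S^{n-1}|$.

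For existence of the limit I need the precise asymptotics
$$
\mathcal K_{n,s,p}(\rho)(\sinh\rho)^{n-1}=s\,\kappa_{n,p}(1+o(1))\rho^{-1-sp/2}\quad\text{as }\rho\to\infty,
$$
uniformly in small $s$. I would derive this from the explicit formula. For $n$ odd, apply $(-\frac1{\sinh\rho}\partial_\rho)^{(n-1)/2}$ to $\rho^{-\nu}K_\nu(\frac{n-1}2\rho)$ with $\nu=\frac{1+sp}2$, and use $K_\nu(z)\sim\sqrt{\pi/(2z)}e^{-z}$. Each derivative falls on the exponential to leading order, giving a factor $(\frac{n-1}{2})^{(n-1)/2}2^{(n-1)/2}e^{-(n-1)\rho/2}$ from the $(\sinh\rho)^{-1}$'s. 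For $n$ even, the extra integral $\int_\rho^\infty\frac{\sinh r\,dr}{\sqrt{\cosh r-\cosh\rho}}(\cdot)$ is handled by Laplace's method near $r=\rho$. With these asymptotics, the substitution $u=s\rho$ and dominated convergence give $A_{n,p}=\frac{2}{p}|S^{n-1}|\kappa_{n,p}\,c_{n,p}>0$. The constant $\kappa_{n,p}$ is computed from the leading asymptotics and is independent of $f$ and $x$, which finishes the proof.
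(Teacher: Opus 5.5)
Your final strategy is the paper's, but the proposal as written has two wrong steps, and the key asymptotic is only sketched. The common part: the pieces over $B(x,1)$ and over the annulus $1\le d(x,y)\le R$ are $O(s)$. The limit comes from $\Phi_p(f(x))\,c_n\int_R^\infty\mathcal K_{n,s,p}(\rho)(\sinh\rho)^{n-1}d\rho$. You write $\mathcal K_{n,s,p}(\rho)(\sinh\rho)^{n-1}=s\,g_s(\rho)\,\rho^{-1-sp/2}$ with $g_s(\rho)\to\kappa$ as $\rho\to\infty$, uniformly in small $s$, and combine this with $s\int_R^\infty\rho^{-1-sp/2}d\rho=\frac2pR^{-sp/2}$. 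Your first candidate $\sigma_{n,p}|S^{n-1}|\int_1^\infty P_n(\rho,0)(\sinh\rho)^{n-1}d\rho$ is $+\infty$, since the integrand is $\asymp\rho^{-1}$; delete it rather than leave it in.

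\textbf{First error: the leading constant, and rigor of the asymptotic.} It is false that every derivative falls on the Bessel exponential. After $j$ applications of $-\frac{1}{\sinh\rho}\partial_\rho$ you carry $(\sinh\rho)^{-j}\approx 2^je^{-j\rho}$. Differentiating that factor is of the same order as differentiating $e^{-\frac{n-1}{2}\rho}$. The correct leading factor is $2^m\prod_{j=0}^{m-1}\big(\frac{n-1}{2}+j\big)$ with $m=\lfloor n/2\rfloor$, not $(n-1)^{(n-1)/2}$ as you state for odd $n$. This is exactly the paper's $\sum_k|c_k^{m,k}|(\frac{n-1}{2})^k$: by the recursion in Lemma \ref{key}, the numbers $2^{-m}|c_k^{m,k}|$ are unsigned Stirling numbers of the first kind.

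The wrong value is harmless for the statement. Only existence of $\kappa$ is needed, because the $s$-uniform lower bound in \eqref{Kinfty} (which you essentially noted) then forces the limit to be positive. So the sign identity $|c_k^{m,k}|=(-1)^{m+k}c_k^{m,k}$ can even be bypassed.

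What is not harmless is rigor. You must control derivatives of the remainder in $K_\nu(z)\sim\sqrt{\pi/(2z)}e^{-z}$, uniformly for $\nu=\frac{1+sp}{2}$ near $\frac12$. The paper avoids differentiating asymptotics altogether. It expands $(\frac{1}{\sinh\rho}\partial_\rho)^m$ in powers of $\frac1\rho\partial_\rho$ (Lemma \ref{key}) and applies the exact identity \eqref{Kderiv}. This gives remainders $O(\rho^{-2}+e^{-2\rho})$ uniformly in $s$. In the even case there is no concentration, so Laplace's method is the wrong tool. Put $r=\rho+t$ and use dominated convergence in $t$; this produces the factor $\int_0^\infty e^{-(2m-1)t}(1-e^{-t})^{-1/2}dt$.

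\textbf{Second error: the final limit.} The limit does not follow from the substitution $u=s\rho$ plus dominated convergence. After that substitution the integrand tends to $\kappa u^{-1}$ on $(0,\infty)$, which is not integrable, because the mass sits where $\log\rho\sim 1/s$. There are two easy fixes:
\begin{itemize}
\item Substitute $v=s\log\rho$. This turns $s\int_R^\infty g_s(\rho)\rho^{-1-sp/2}d\rho$ into $\int_{s\log R}^\infty g_s(e^{v/s})\,e^{-pv/2}dv$. Since $\sup_{s,\rho\ge1}g_s(\rho)<\infty$ by \eqref{Kinfty}, dominated convergence with dominant $Ce^{-pv/2}$ gives the limit $\frac{2\kappa}{p}$.
\item As the paper does, subtract the exact term $\kappa\rho^{-1-sp/2}$ and show the remainder has an integral bounded uniformly in $s$.
\end{itemize}
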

\vspace*{0.2cm}
We define the logarithm operator $\log(-\Delta_{\mathbb H^n})_p$ as follows
$$
(\log(-\Delta_{\mathbb H^n})_p)f(x)=\lim_{s\rightarrow 0^+}\frac{(-\Delta_{\mathbb H^n})_p^sf(x)-A_{n,p}\Phi _p(f(x))}{s},\quad x\in \mathbb H^n.
$$
We establish pointwise representations for the logarithm operator. 

\begin{thm}\label{Th1.2}
Let $n\in \mathbb N$, $n\geq 2$, $\alpha \in (0,1)$, $1<p<\infty$, and $f\in C_c (\mathbb H^n)\cap {\rm Lip}_{\rm loc}^\alpha(\mathbb H^n)$.

(a) There exist $\alpha _{n,p},\beta_{n,p}\in \mathbb R$ such that if $x\in \mathbb H^n$ and $\supp f\subseteq B(x,R)$, with $R\geq 1$, then,
    \begin{align*}
    (\log(-\Delta_{\mathbb H^n})_p)f(x)&=(\alpha _{n,p}+\beta_{n,p}\log R)\Phi _p(f(x))\\
    &\quad +\sigma_{n,p}\int_{B(x,R)}\Phi _p(f(x)-f(y))P_n(d_{\mathbb H^n}(x,y),0)dy.
\end{align*}
   
(b) There exists $\alpha_{n,p}\in \mathbb R$ such that
\begin{align*}
    (\log(-\Delta_{\mathbb H^n})_p)f(x)&=\alpha _{n,p}\Phi _p(f(x))\\
    &\hspace{-1cm}+\sigma_{n,p}\left(\int_{B(x,1)}\Phi _p(f(x)-f(y))P_n(d_{\mathbb H^n}(x,y),0)dy\right.\\
    &\hspace{-1cm}+\left.\int_{\mathbb H^n\setminus B(x,1)}\big(\Phi _p(f(x)-f(y))-\Phi _p(f(x))\big)P_n(d_{\mathbb H^n}(x,y),0)dy\right), \quad x\in \mathbb H^n.
\end{align*}
Here $\sigma _{n,p}$ is the constant given in \eqref{limKP}.
\end{thm}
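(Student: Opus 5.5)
Fix $x\in\mathbb H^n$ with $\supp f\subseteq B(x,R)$, $R\ge 1$, and split
$$
(-\Delta_{\mathbb H^n})_p^sf(x)=\int_{B(x,R)}\Phi_p(f(x)-f(y))\mathcal K_{n,s,p}(d_{\mathbb H^n}(x,y))\,dy+\Phi_p(f(x))\int_{\mathbb H^n\setminus B(x,R)}\mathcal K_{n,s,p}(d_{\mathbb H^n}(x,y))\,dy=:I_s+J_s,
$$
using that $f(y)=0$ outside $B(x,R)$. The plan is to compute the limit of $I_s/s$ and the expansion of $J_s$ to first order in $s$, which yields (a). Part (b) then follows by algebra: write the integral over $B(x,R)$ as the one over $B(x,1)$ plus the one over $B(x,R)\setminus B(x,1)$, and on the annulus add and subtract $\Phi_p(f(x))$. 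The terms $\Phi_p(f(x))\int_{B(x,R)\setminus B(x,1)}P_n\,dy$ and $\beta_{n,p}\log R\,\Phi_p(f(x))$ must combine into an $R$-independent constant. This is automatic, since the left side does not depend on $R$; alternatively it can be checked from the asymptotics of $\int_{1<d<R}P_n(d,0)\,dy$.

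\textbf{The term $I_s$.} Since $f\in{\rm Lip}^\alpha_{\rm loc}$, we have $|\Phi_p(f(x)-f(y))|\le C d(x,y)^{\alpha(p-1)}$ on $B(x,R)$. By \eqref{K0} and \eqref{Kinfty}, $\mathcal K_{n,s,p}(\rho)/s\le C\rho^{-n-sp}$ for $\rho<1$ and is bounded for $1\le\rho\le R$. For $s<\alpha(p-1)/(2p)$ this gives an integrable majorant $C\rho^{\alpha(p-1)/2-n}(\sinh\rho)^{n-1}$ near $0$, so no principal value is needed. Dominated convergence and \eqref{limKP} then give $I_s/s\to\sigma_{n,p}\int_{B(x,R)}\Phi_p(f(x)-f(y))P_n(d(x,y),0)\,dy$, and in particular $I_s\to 0$.

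\textbf{The term $J_s$.} By polar coordinates, $J_s=\Phi_p(f(x))\,|S^{n-1}|\int_R^\infty\mathcal K_{n,s,p}(\rho)(\sinh\rho)^{n-1}d\rho$. Theorem \ref{Th1.1} identifies $A_{n,p}=\lim_{s\to0}|S^{n-1}|\int_R^\infty\mathcal K_{n,s,p}(\rho)(\sinh\rho)^{n-1}d\rho$, which must be independent of $R$. Indeed, each finite piece $\int_R^{R'}$ is $O(s)$, so the mass escapes to infinity. The substantive step is to compute
$$
G(s,R):=|S^{n-1}|\int_R^\infty\mathcal K_{n,s,p}(\rho)(\sinh\rho)^{n-1}d\rho=A_{n,p}+s(\alpha_{n,p}+\beta_{n,p}\log R)+o(s).
$$

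\textbf{Computing the tail.} This is the main obstacle. I would not use only the two-sided bounds \eqref{Kinfty}; instead I would use the explicit kernel. For $n$ odd, integrate by parts $(n-1)/2$ times using $(\sinh\rho)^{n-1}d\rho$ and the operator $-\frac{1}{\sinh\rho}\partial_\rho$. The boundary terms at $R$ are $O(s)$ with explicit derivatives, and the main term reduces to $s a_{n,s,p}$ times integrals of the form $\int_R^\infty \cosh$-polynomials $\times\,\rho^{-\nu}K_\nu(\frac{n-1}{2}\rho)\,d\rho$ with $\nu=(1+sp)/2$. The leading asymptotic $K_\nu(z)\sim\sqrt{\pi/2z}\,e^{-z}$ cancels the growth $e^{(n-1)\rho}$ exactly, leaving $\int_R^\infty\rho^{-1-sp/2}(1+O(1/\rho))\,d\rho=\frac{2}{sp}R^{-sp/2}+O(1)$. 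Multiplying by $s$ gives $A_{n,p}=\frac{2}{p}\cdot(\text{const})\cdot a_{n,0,p}$ in the limit. The next-order term $-s\log R$ produces $\beta_{n,p}$, while $\partial_s a_{n,s,p}$ and the $O(1)$ remainders, which are uniformly integrable in $s$ thanks to the $1/\rho$ corrections in the Bessel expansion, produce $\alpha_{n,p}$.

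For $n$ even, I would first interchange the Abel-type integral in the definition of $\mathcal K_{n,s,p}$ with the radial integral via Fubini, reducing to the odd-type computation in dimension $n+1$. Throughout, uniformity in $s$ of the error terms (needed to pass $o(s)$) is obtained from the standard uniform asymptotics of $K_\nu$ for $\nu$ in a compact set.

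Combining the three steps, $\big((-\Delta_{\mathbb H^n})^s_pf(x)-A_{n,p}\Phi_p(f(x))\big)/s=I_s/s+(G(s,R)-A_{n,p})\Phi_p(f(x))/s$ converges to the expression in (a).
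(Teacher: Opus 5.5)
\textbf{The gap.} The step that fails is your tail expansion $G(s,R)=A_{n,p}+s(\alpha_{n,p}+\beta_{n,p}\log R)+o(s)$ with $\alpha_{n,p},\beta_{n,p}$ independent of $R$. Your computation does not produce it. The boundary terms at $\rho=R$ from the integrations by parts, and the $s\to0^+$ limits of the remainder integrals over $[R,\infty)$, are non-logarithmic functions of $R$. Calling their sum ``$\alpha_{n,p}$'' hides this.

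In fact no such $R$-independent pair exists. Let $\kappa(R):=\lim_{s\to0^+}(G(s,R)-A_{n,p})/s$. Since $\mathcal K_{n,s,p}/s\to\sigma_{n,p}P_n(\cdot,0)$ boundedly on compact subsets of $[1,\infty)$, dominated convergence gives, for $1\le R<R'$,
\[
\kappa(R)-\kappa(R')=\sigma_{n,p}\,|S^{n-1}|\int_R^{R'}P_n(\rho,0)(\sinh\rho)^{n-1}\,d\rho .
\]
So $\kappa(R)=\alpha+\beta\log R$ would force $\sigma_{n,p}|S^{n-1}|P_n(\rho,0)(\sinh\rho)^{n-1}=-\beta/\rho$ for every $\rho\ge1$. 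But for $n=3$ one computes $P_3(\rho,0)(\sinh\rho)^2=\sqrt{\pi/2}\,(1-e^{-2\rho})(1+\rho)/(2\rho^2)$, which is not a multiple of $1/\rho$.

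This is exactly the obstruction hidden in your part (b). The ``automatic'' $R$-independence there is a false identity, and the asymptotics of $\int_{1<d<R}P_n(d,0)\,dy$ only give $c\log R+C+o(1)$, not an exact cancellation.

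What can actually be proved, by your route or the paper's, is (a) with $\alpha_{n,p}+\beta_{n,p}\log R$ replaced by $\kappa_{n,p}(R)$. This is a function of $n,p,R$ alone, not of $f$ or $x$, and it equals $\alpha+\beta\log R+o(1)$ only as $R\to\infty$. The paper's own proof has the same gap: its $\alpha^{(1)}_{n,p}$ (respectively $\widehat\alpha^{(1)}_{n,p}$) comes from dominated convergence on $[R,\infty)$, so it depends on $R$.

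Part (b) is unaffected. Your algebraic derivation from the corrected (a), combined with the displayed identity, gives (b) with $\alpha_{n,p}=\kappa_{n,p}(1)$. That is a legitimate shortcut compared with the paper's separate $H_1+H_2+H_3$ decomposition.

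\textbf{The tail computation.} For $n$ odd your route differs from the paper's and works. Moving $(-\tfrac{1}{\sinh\rho}\partial_\rho)^m$ onto $(\sinh\rho)^{2m}$ leaves a homogeneous polynomial of degree $m$ in $\sinh\rho,\cosh\rho$, whose leading term is a positive multiple of $e^{m\rho}$. The cancellation against $K_\nu(m\rho)$, the existence of $\kappa_{n,p}(R)$, and even $A_{n,p}>0$ then follow without Lemma~\ref{key} or \eqref{Kderiv}.

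The even case, however, is only asserted. After the Fubini swap, the weight $\int_R^r(\sinh\rho)^{n-1}(\cosh r-\cosh\rho)^{-1/2}\,d\rho$ behaves like $\sqrt{r-R}$ near $r=R$. Its second derivative is therefore not integrable, and iterated integration by parts breaks down for $n\ge4$; this is not literally the odd computation in dimension $n+1$. There you need a direct estimate of the double integral, as the paper does when it isolates the main term $J_5^{(k)}$.
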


The logarithm operator $\log(-\Delta_{\mathbb H^n})_p$ can be characterized as the solution of an extension pro\-blem. We define the extension operator
$$
E_n(f)(x,t)=\int_{\mathbb H^n}f(y)P_n(d_{\mathbb H^n}(x,y),t)dy,\quad x\in \mathbb H^n\mbox{ and }t>0.
$$
Note that for suitable functions $f$ we have that
$$
\Delta_{\mathbb H^n}E_n(f)+\frac{1}{t}\frac{\partial}{\partial t}E_n(f)+\frac{\partial ^2}{\partial t^2}E_n(f)=0.
$$
\begin{thm}\label{Th1.3}
Let $n\in \mathbb N$, $n\geq 2$, $\alpha \in (0,1)$, $1<p<\infty$, and $f\in C_c(\mathbb H^n)\cap {\rm Lip}^\alpha _{\rm loc}(\mathbb H^n)$. There exist $\gamma_n$ and $\delta_{n,p}\in \mathbb R$ such that
\begin{align*}
(\log(-\Delta_{\mathbb H^n})_p)f(x)&=\lim_{t\rightarrow 0^+}\sigma_{n,p}\Big(E_n\big(\Phi _p(f(x)-f(\cdot))-\Phi_p(f(x))\big)+\gamma_n\Phi _p(f(x))\log t\Big)\\
&\quad +\delta_{n,p}\Phi _p(f(x)),\quad x\in \mathbb H^n,
\end{align*}
being $\sigma_{n,p}$ the constant in \eqref{limKP}.
\end{thm}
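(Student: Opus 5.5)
Our starting point is Theorem \ref{Th1.2}(b). Write $g_x(y)=\Phi_p(f(x)-f(y))-\Phi_p(f(x))$. Then $g_x(y)=0$ for $y\notin\supp f$, and $g_x(y)=\Phi_p(f(x)-f(y))$ up to the bounded term $-\Phi_p(f(x))$ everywhere. The plan is to compare $\sigma_{n,p}E_n(g_x)(x,t)$ with the integral expression in Theorem \ref{Th1.2}(b) as $t\to 0^+$ and isolate a logarithmic divergence. Split
$$
E_n(g_x)(x,t)=\int_{B(x,1)}\Phi_p(f(x)-f(y))P_n(d(x,y),t)\,dy-\Phi_p(f(x))\int_{B(x,1)}P_n(d(x,y),t)\,dy+\int_{\mathbb H^n\setminus B(x,1)}g_x(y)P_n(d(x,y),t)\,dy .
$$

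\textbf{Step 1: the two integrals against $g_x$.} First we need kernel estimates for $P_n(\rho,t)$, obtained from its explicit formula in terms of $\mathbb K_n(u)=c\,e^{-\frac{n-1}{2}u}/u$. For $\rho^2+t^2\le 4$ we expect
$$
P_n(\rho,t)\lesssim (\rho^2+t^2)^{-n/2},
$$
and for $\rho\geq1$ we expect a uniform exponential bound $P_n(\rho,t)\lesssim e^{-(n-1)\rho}$ for $t\le1$. We also have $P_n(\rho,t)\to P_n(\rho,0)$ pointwise as $t\to0$ (monotonically, since $\mathbb K_n$ is decreasing). The estimates come from differentiating under the formula, for odd and even $n$ separately; note $P_n(\rho,0)\sim \rho^{-n}$ near $0$, consistent with \eqref{K0} divided by $s$. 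Since $|\Phi_p(f(x)-f(y))|\lesssim d(x,y)^{\alpha(p-1)}$ near $x$, dominated convergence gives convergence of the first integral to $\int_{B(x,1)}\Phi_p(f(x)-f(y))P_n(d,0)\,dy$. Since $g_x$ is bounded and $P_n(\cdot,0)$ decays like $e^{-(n-1)\rho}$ against the volume growth $(\sinh\rho)^{n-1}$, dominated convergence also handles the third integral.

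\textbf{Step 2: the logarithmic term.} The main obstacle is the asymptotics of
$$
I(t)=\int_{B(x,1)}P_n(d(x,y),t)\,dy=|S^{n-1}|\int_0^1P_n(\rho,t)(\sinh\rho)^{n-1}\,d\rho .
$$
We aim to show $I(t)=-\gamma_n\log t+\kappa_n+o(1)$. To do this, write $P_n(\rho,t)=c'(\rho^2+t^2)^{-n/2}+R(\rho,t)$, where the remainder satisfies $|R|\lesssim(\rho^2+t^2)^{-(n-1)/2}$ (plus possibly a $\rho^{-n+1}$-type term) uniformly in $t$. Then dominated convergence applies to the remainder, and $\int_0^1 R(\rho,t)\sinh^{n-1}\rho\,d\rho\to\int_0^1R(\rho,0)\sinh^{n-1}\rho\,d\rho$. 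For the main part, replace $\sinh^{n-1}\rho$ by $\rho^{n-1}(1+O(\rho^2))$ and compute
$$
\int_0^1\frac{\rho^{n-1}}{(\rho^2+t^2)^{n/2}}\,d\rho=\int_0^{1/t}\frac{u^{n-1}}{(1+u^2)^{n/2}}\,du=-\log t+C_n+o(1).
$$
For the leading coefficient we expect $c'$ to be the coefficient that makes $P_n(\cdot,0)$ match the Euclidean Poisson kernel at $0$. The delicate point is extracting the leading singularity of $P_n$ rigorously from the radial derivative formula; for even $n$ this also involves the Abel-type integral, where we would use the substitution $\cosh r-\cosh\rho$ and Taylor expansion near the diagonal.

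\textbf{Step 3: assembling.} Combining the steps,
$$
\sigma_{n,p}\big(E_n(g_x)(x,t)+\gamma_n\Phi_p(f(x))\log t\big)\to \sigma_{n,p}\Big(\int_{B(x,1)}\cdots+\int_{\mathbb H^n\setminus B(x,1)}\cdots\Big)-\sigma_{n,p}\kappa_n\Phi_p(f(x)).
$$
By Theorem \ref{Th1.2}(b), this limit equals $(\log(-\Delta_{\mathbb H^n})_p)f(x)-\alpha_{n,p}\Phi_p(f(x))-\sigma_{n,p}\kappa_n\Phi_p(f(x))$. The identity therefore holds with $\delta_{n,p}=\alpha_{n,p}+\sigma_{n,p}\kappa_n$. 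Here $\gamma_n$ depends only on $n$, since it comes only from $P_n$.
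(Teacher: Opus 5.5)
Your plan is essentially the paper's proof. You reduce to Theorem~\ref{Th1.2}(b), pass to the limit by dominated convergence in the H\"older part near $x$ and in the far part, and extract the logarithm from $\Phi_p(f(x))\int P_n$ through the leading singularity of $P_n$.

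The differences are cosmetic. First, you do not split off $B(x,t)$ (the paper's $J_3$, which it handles by a separate rescaling argument). This is harmless: your dominating function $\rho^{\alpha(p-1)-1}$ works on all of $B(x,1)$, and the range $\rho<t$ is absorbed into $\int_0^{1/t}u^{n-1}(1+u^2)^{-n/2}\,du$. Second, you use one model integral for both parities. This is justified, because for $n=2m$ the paper's leading term satisfies
$$
\sqrt2\int_\rho^\infty\frac{r\,dr}{\sqrt{r^2-\rho^2}\,(r^2+t^2)^{m+\frac12}}=\frac{\sqrt{\pi}\,\Gamma(m)}{\sqrt2\,\Gamma(m+\frac12)}\,(\rho^2+t^2)^{-m}.
$$
Hence $H_m(u)=\frac{\sqrt\pi\,\Gamma(m)}{2\Gamma(m+\frac12)}(1+u^2)^{-m}$ in Lemma~\ref{psivarphi}, and $\varphi$ is a constant multiple of your model integral. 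Your route therefore simplifies the paper's even-dimensional bookkeeping.

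Two points need fixing.

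\textbf{Sign slip.} With your normalization $I(t)=-\gamma_n\log t+\kappa_n+o(1)$, the combination $-\Phi_p(f(x))I(t)+\gamma_n\Phi_p(f(x))\log t$ equals $2\gamma_n\Phi_p(f(x))\log t-\kappa_n\Phi_p(f(x))+o(1)$. This diverges unless $\gamma_n=0$. You must normalize $I(t)=\gamma_n\log t+\kappa_n+o(1)$, i.e.\ take $\gamma_n=-|S^{n-1}|c'$; for odd $n$ this is the paper's $\gamma_n=-c_nA_m$. With that normalization your value $\delta_{n,p}=\alpha_{n,p}+\sigma_{n,p}\kappa_n$ is correct.

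\textbf{Kernel expansion only announced.} The expansion $P_n=c'(\rho^2+t^2)^{-n/2}+R$, with a remainder integrable against $\rho^{n-1}$ uniformly in $t$, is the real content of the proof. Your proposal only announces it. It is true, and the paper supplies it through Lemmas~\ref{key} and~\ref{FadiBruno}.
\begin{itemize}
\item For odd $n$, the multi-indices in $\Lambda_m$ contribute exactly $A_m\rho^m(\sinh\rho)^{-m}e^{-m\sqrt{\rho^2+t^2}}(\rho^2+t^2)^{-m-\frac12}$. So $c'=A_m$ and this part of the remainder is $O((\rho^2+t^2)^{-m})$. All other multi-indices obey \eqref{cotabbT}.
\item For even $n$, one additionally replaces $\cosh r-\cosh\rho$ by $(r^2-\rho^2)/2$ and $\sinh r$ by $r$, with errors controlled as in \eqref{cotaK1}, and treats the range $r\geq 1$ separately.
\end{itemize}

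Two minor remarks. The far integral is finite because $g_x$ vanishes off $\supp f$, not because of the decay of $P_n$: $e^{-(n-1)\rho}(\sinh\rho)^{n-1}$ is not integrable at infinity. Your claimed monotonicity of $P_n(\rho,t)$ in $t$ does not follow from $\mathbb K_n$ being decreasing, since $P_n$ involves derivatives of $\mathbb K_n$; it is not needed, as pointwise convergence plus domination suffices.
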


As it was mentioned, the logarithmic $p$-Laplacian in $\mathbb R^n$, $\log(-\Delta)_p$, was defined in \cite{DJF} and a pointwise integral representation for $\log (-\Delta)_p$ was established (\cite[Theorem 1.1]{DJF}). As far as we know the operator $\log(-\Delta )_p$ has not been obtained as solution of an extension problem. By using the arguments developed in the proof of Theorem \ref{Th1.3} we can complete the results in \cite{DJF} characterizing $\log (-\Delta)_p$ as the solution of an extension problem.

As in \cite[Theorem 1.2]{CHW} we consider the extension operator $\mathbb E_n$ defined by
$$
\mathbb E_n(f)(x,t)=\frac{\Gamma (n/2)}{2\pi ^{n/2}}\int_{\mathbb R^n}\frac{f(z)}{(|x-z|^2+t^2)^{n/2}}dz,\quad x\in \mathbb R^n\mbox{ and }t>0.
$$
For $k\in \mathbb N\cup\{0\}$, $\sigma \in (0,1)$, and $\alpha =k+\sigma$, we denote by $C_c^\alpha(\mathbb R^n)$, the space of $k$-times continuously differentiable functions with compact support such that the derivatives up to order $k$ are $\sigma$-Lipschitz in $\mathbb R^n$.
\begin{thm}\label{Th1.4}
    Let $1<p<\infty$, and $f\in C_c^\alpha (\mathbb R^n)$ for some $\alpha >0$. Then, for a certain $\alpha_{n,p}\in \mathbb R$,
    \begin{align*}
    (\log (-\Delta)_p)f(x)&\\
    &\hspace{-2cm}=\lim_{t\rightarrow 0^+}p\Big(\mathbb E_n\big(\Phi _p(f(x)-f(\cdot ))-\Phi_p(f(x))\big)-\Phi _p(f(x))\log t\Big)+\alpha_{n,p}\Phi _p(f(x)),\quad x\in \mathbb R^n.
    \end{align*}
\end{thm}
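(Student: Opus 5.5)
The plan is to reduce Theorem \ref{Th1.4} to the pointwise integral representation of $\log(-\Delta)_p$ from \cite[Theorem 1.1]{DJF}, in the same way Theorem \ref{Th1.3} is derived from Theorem \ref{Th1.2}(b). I take that representation in the form
$$
(\log(-\Delta)_p)f(x)=C_{n,p}\Big(\int_{B(x,1)}\frac{\Phi_p(f(x)-f(y))}{|x-y|^n}dy+\int_{\mathbb R^n\setminus B(x,1)}\frac{\Phi_p(f(x)-f(y))-\Phi_p(f(x))}{|x-y|^n}dy\Big)+\rho_{n,p}\Phi_p(f(x)),
$$
with $C_{n,p}=p\,\Gamma(n/2)/(2\pi^{n/2})=p\,c_n$, where $c_n=\Gamma(n/2)/(2\pi^{n/2})$ is the normalisation in $\mathbb E_n$, and $\rho_{n,p}\in\mathbb R$. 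If the constant in \cite{DJF} is normalised differently, I would rederive this representation from the singular-integral definition of $(-\Delta)^s_p$. This rederivation is the same computation, done at $s$ small instead of $t$ small, and it is routine given the asymptotics $c_{n,s,p}\sim s\,C_{n,p}$ as $s\to0^+$.

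Fix $x$ and set $g(z)=\Phi_p(f(x)-f(z))-\Phi_p(f(x))$. Since $g(z)=0$ whenever $f(z)=0$, the function $g$ has compact support. It is bounded, and $|g(z)+\Phi_p(f(x))|=|f(x)-f(z)|^{p-1}\le C|x-z|^{\beta(p-1)}$ with $\beta=\min\{\alpha,1\}$. Write $E_n(g)(x,t)=c_n\int g(z)(|x-z|^2+t^2)^{-n/2}dz$ and split it into three pieces:
\begin{itemize}
\item $I_1(t)=c_n\int_{B(x,1)}\Phi_p(f(x)-f(z))(|x-z|^2+t^2)^{-n/2}dz$;
\item $I_2(t)=-\Phi_p(f(x))\,c_n\int_{B(x,1)}(|x-z|^2+t^2)^{-n/2}dz$;
\item $I_3(t)=c_n\int_{\mathbb R^n\setminus B(x,1)}g(z)(|x-z|^2+t^2)^{-n/2}dz$.
\end{itemize}

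The piece $I_1$ is dominated by $C|x-z|^{\beta(p-1)-n}$, which is integrable on $B(x,1)$. By dominated convergence, $I_1(t)\to c_n\int_{B(x,1)}\Phi_p(f(x)-f(z))|x-z|^{-n}dz$. The piece $I_3$ is dominated by $\|g\|_\infty\chi_{\supp g}$, so $I_3(t)$ converges to the corresponding integral with kernel $|x-z|^{-n}$.

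The key step is $I_2$. Using polar coordinates and $c_n|S^{n-1}|=1$, then substituting $u=r/t$,
$$
c_n\int_{B(x,1)}(|x-z|^2+t^2)^{-n/2}dz=\int_0^1\frac{r^{n-1}}{(r^2+t^2)^{n/2}}dr=\int_0^{1/t}\frac{u^{n-1}}{(1+u^2)^{n/2}}du .
$$
I then check that this equals $-\log t+\kappa_n+o(1)$ as $t\to0^+$, where
$$
\kappa_n=\int_0^1\frac{u^{n-1}}{(1+u^2)^{n/2}}du+\int_1^\infty\Big(\frac{u^{n-1}}{(1+u^2)^{n/2}}-\frac1u\Big)du .
$$
The last integral converges because the integrand is $O(u^{-3})$. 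Hence $I_2(t)=\Phi_p(f(x))\log t-\kappa_n\Phi_p(f(x))+o(1)$.

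Consequently the expression
$$
p\big(E_n(g)(x,t)-\Phi_p(f(x))\log t\big)
$$
converges as $t\to0^+$. Its limit is $p\,c_n$ times the bracket in the representation of \cite{DJF}, minus $p\kappa_n\Phi_p(f(x))$. Comparing with the displayed representation gives Theorem \ref{Th1.4} with $\alpha_{n,p}=\rho_{n,p}+p\kappa_n$.

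I expect the main obstacle to be the exact matching of normalising constants with \cite{DJF}. The coefficient in front of $\log t$ must be exactly $p$, and this requires the kernel constant of $\log(-\Delta)_p$ to be exactly $p\,c_n$. If that identification is unclear, I would verify it directly from $\lim_{s\to0^+}c_{n,s,p}/s$, using the explicit formula for $c_{n,s,p}$ and the normalisation $\lim_{s\to0^+}(-\Delta)_p^sf=\Phi_p(f)$.
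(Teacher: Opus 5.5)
Your proposal is correct and follows essentially the paper's route. Both arguments invoke the representation of \cite[Theorem 1.1]{DJF} with kernel constant $p\,\Gamma(n/2)/(2\pi^{n/2})$, pass to the limit by dominated convergence in the terms containing $\Phi_p(f(x)-f(\cdot))$, and extract the $-\log t$ from the constant term $-\Phi_p(f(x))\int(|x-y|^2+t^2)^{-n/2}dy$. The differences are cosmetic: you do not split off $B(x,t)$, and you compute the logarithmic asymptotics directly via $u=r/t$ instead of citing \cite[Lemma 4.1]{CHW}. Your $\kappa_n$ coincides with the paper's $q_n/2+\widetilde{q}_n$, so you obtain the same $\alpha_{n,p}$.
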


The paper is organized as follows. In the next section we present several auxiliary results needed for the proofs of our main theorems, and then prove them in subsequent sections.

Throughout this paper, $C$ and $c$ represent positive constants whose values may vary from one ocurrence to another.

\section{Auxiliary results}\label{S2}

The following result plays a key role in the proofs given in the next sections.

\begin{lem}\label{key}
For every $m\in \mathbb N$ we have 
\begin{equation}\label{formderiv}
\Big(\frac{1}{\sinh \rho }\frac{\partial }{\partial\rho }\Big)^m=\sum_{k=1}^m\frac{T_{m,k}(\rho )}{(\sinh \rho)^{2m-k}}\Big(\frac{1}{\rho }\frac{\partial }{\partial\rho }\Big)^k,\quad \rho >0,
\end{equation}
where, for each $k=1,\ldots, m$, 
\begin{equation}\label{Tmk}
T_{m,k}(\rho )=\sum_{j=0}^k Q_j^{m,k}(\rho )\rho ^j,\quad \rho >0,
\end{equation}
with $Q_j^{m,k}(\rho )=R_j^{m,k}(\sinh \rho ,\cosh \rho)$, $\rho >0$, where $R_j^{m,k}$ is a homogeneous polynomial of degree $m-k$, for $j=0,\ldots , k$. In particular, $T_{m,m}(\rho )=\rho  ^m$, $\rho >0$.

Moreover, for every $m\in \mathbb N$, and $k=1,\ldots, m$, the following properties hold:

(i) For certain $(b_i^{m,k})_{i=0}^\infty\subseteq \mathbb R$, 
$$
T_{m,k}(\rho )=\sum_{i=0}^\infty b_i^{m,k}\rho ^{2m-k+2i},\quad \rho >0,
$$
and the series is absolutely convergent for each $\rho >0$. Note that if $k\in \mathbb N$ is even (respectively, odd) $T_{m,k}$ can be extended as an even function (respectively, odd function).

(ii) For each $j=0,\ldots,k$, there exists $c_j^{m,k}\in \mathbb R$ such that
$$
\Big|\frac{Q_j^{m,k}(\rho )}{(\sinh \rho )^{2m-k}}-c_j^{m,k}e^{-m\rho }\Big|\leq Ce^{-(m+2)\rho},\quad \rho \geq 1,
$$

where $C$ does not depend on $j$ or $k$. Furthermore, $|c_k^{m,k}|=(-1)^{m+k}c_k^{m,k}$.

(iii) For each $j=0,\ldots,k$, 
$$
\Big|\frac{Q_j^{m,k}(\rho )}{(\sinh \rho )^{2m-k}}\Big|\leq Ce^{-m\rho },\quad \rho \geq 1,
$$
for certain $C>0$ that does not depend on $k$ or $j$.
\end{lem}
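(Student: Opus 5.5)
We prove \eqref{formderiv}--\eqref{Tmk} by induction on $m$, and then read off (i)--(iii) from the explicit recursion for the coefficients. Write $D=\frac{1}{\sinh\rho}\frac{\partial}{\partial\rho}$ and $L=\frac{1}{\rho}\frac{\partial}{\partial\rho}$. The basic identity is
$$
D=\frac{\rho}{\sinh\rho}\,L .
$$
Hence the case $m=1$ holds with $T_{1,1}(\rho)=\rho$. This corresponds to $Q_1^{1,1}=1$ and $Q_0^{1,1}=0$, both homogeneous of degree $0$.

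For the induction step, assume \eqref{formderiv} holds for $m$. We apply $D=\frac{\rho}{\sinh\rho}L$ to each term $\frac{T_{m,k}}{(\sinh\rho)^{2m-k}}L^k$ and use the product rule:
$$
D\Big(\frac{T_{m,k}}{(\sinh\rho)^{2m-k}}L^k\Big)=\frac{T_{m,k}'\sinh\rho-(2m-k)T_{m,k}\cosh\rho}{(\sinh\rho)^{2m-k+2}}\,L^k+\frac{\rho\, T_{m,k}}{(\sinh\rho)^{2m-k+1}}\,L^{k+1}.
$$
Both denominators have the required form $(\sinh\rho)^{2(m+1)-k'}$, with $k'=k$ and $k'=k+1$ respectively. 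This gives the recursion
$$
T_{m+1,k}=\rho\,T_{m,k-1}+\sinh\rho\, T_{m,k}'-(2m-k)\cosh\rho\, T_{m,k},
$$
with the convention $T_{m,0}=T_{m,m+1}=0$.

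Now we check the structure claimed in \eqref{Tmk}. The term $\rho T_{m,k-1}$ raises the power of $\rho$ by one while the degree $m-k+1$ of the $Q$'s is kept. The term $\sinh\rho\,T_{m,k}'$ produces $\sinh\rho\,\partial_\rho Q_j\,\rho^j$, with degree raised by one since $\partial_\rho$ preserves degree of $R(\sinh,\cosh)$. It also produces $j\sinh\rho\, Q_j\rho^{j-1}$. Finally, the term $\cosh\rho\,T_{m,k}$ clearly has the right form. So $T_{m+1,k}$ is a polynomial in $\rho$ of degree $\le k$ whose coefficients are homogeneous of degree $m+1-k$ in $(\sinh,\cosh)$. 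For $k=m+1$ only the first term survives, giving $T_{m+1,m+1}=\rho^{m+1}$.

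For (i) the cleanest route is to avoid the recursion and argue analytically. Both $D^m$ and $L^k$ can be applied to smooth even functions, e.g. $g(\rho)=\rho^{2N}$, for which $L^k g$ is an explicit monomial. Testing \eqref{formderiv} on $g_N(\rho)=\rho^{2N}$ for $N=1,\dots,m$ gives a triangular linear system. From it, each $T_{m,k}/(\sinh\rho)^{2m-k}$ is a combination of $\rho^{2k-2N}D^m(\rho^{2N})$. Alternatively, we induct directly with the recursion, using that $\sinh$, $\cosh$ are entire with parity odd/even and that $\rho\mapsto\rho^j Q_j$ is entire. I would use the recursion. By induction, $T_{m,k}$ is entire and has the parity of $k$, since each of the three terms preserves the parity $(-1)^k$.

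The vanishing order $2m-k$ at $0$ is the main obstacle. It follows because the left side $D^m g$ is smooth at $0$ for even smooth $g$: since $\sinh\rho\sim\rho$, each $D$ behaves like $L$. One can show it by induction on the recursion. If $T_{m,k}=O(\rho^{2m-k})$ and $T_{m,k-1}=O(\rho^{2m-k+1})$, then the combination $\sinh\rho\,T'-(2m-k)\cosh\rho\,T$ kills the leading power $\rho^{2m-k}$. Indeed, with $T\approx b\rho^{2m-k}$ we get $\rho\cdot b(2m-k)\rho^{2m-k-1}-(2m-k)b\rho^{2m-k}=0$. So this combination is $O(\rho^{2m-k+2})$, and $\rho T_{m,k-1}=O(\rho^{2m-k+2})$. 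Together with the parity, this yields the expansion in powers $\rho^{2(m+1)-k+2i}$, which is absolutely convergent since the function is entire.

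For (ii) and (iii), we write $\sinh\rho=\frac{e^\rho}{2}(1-e^{-2\rho})$ and $\cosh\rho=\frac{e^\rho}{2}(1+e^{-2\rho})$. A homogeneous polynomial $R$ of degree $m-k$ then satisfies
$$
R(\sinh\rho,\cosh\rho)=2^{-(m-k)}R(1,1)e^{(m-k)\rho}+O\big(e^{(m-k-2)\rho}\big).
$$
Also $(\sinh\rho)^{-(2m-k)}=2^{2m-k}e^{-(2m-k)\rho}\big(1+O(e^{-2\rho})\big)$ for $\rho\ge1$. Multiplying gives (ii) with $c_j^{m,k}=2^{m}R_j^{m,k}(1,1)$, and (iii) follows immediately. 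The constants are uniform because there are finitely many $j,k$.

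The sign statement concerns $j=k$. From the recursion, the top coefficient $Q_k^{m+1,k}$ comes from $Q_{k-1}^{m,k-1}$, from $\sinh\rho\,\partial_\rho Q_k^{m,k}$, and from $-(2m-k)\cosh\rho\,Q_k^{m,k}$. Evaluating at $(1,1)$ turns $\partial_\rho$ on a degree-$d$ homogeneous polynomial into multiplication by $d$ at leading order. This gives
$$
c_k^{m+1,k}/2^{m+1}=c_{k-1}^{m,k-1}/2^{m}+\big((m-k)-(2m-k)\big)c_k^{m,k}/2^{m}=c_{k-1}^{m,k-1}/2^m-m\,c_k^{m,k}/2^m.
$$
By induction, both terms have sign $(-1)^{m+1+k}$, since $c_{k-1}^{m,k-1}$ has sign $(-1)^{m+k-1}$ and $-c_k^{m,k}$ has sign $-(-1)^{m+k}$. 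Hence the signs alternate as claimed, starting from $c_m^{m,m}=2^m>0$.
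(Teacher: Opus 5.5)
Your proposal is correct and follows the paper's proof: the same recursion $T_{m+1,k}=\rho\,T_{m,k-1}+\sinh\rho\,T_{m,k}'-(2m-k)\cosh\rho\,T_{m,k}$, the same factorization through $e^{\rho}/2$ for (ii)--(iii) giving $c_j^{m,k}=2^mR_j^{m,k}(1,1)$, and the same sign recursion $2^{-(m+1)}c_k^{m+1,k}=2^{-m}\big(c_{k-1}^{m,k-1}-m\,c_k^{m,k}\big)$, where your use of Euler's identity packages the paper's explicit sums over $r$. The one variation is in (i): instead of the paper's explicit double-series bookkeeping, you combine parity, entireness, and the cancellation of the leading $\rho^{2m-k}$ term in $\sinh\rho\,T_{m,k}'-(2m-k)\cosh\rho\,T_{m,k}$, which is the same cancellation the paper's coefficient computation exhibits, obtained more economically.
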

\begin{proof}
We proceed by induction on $m\in \mathbb N$ to establish the formula \eqref{formderiv}. When $m=1$, it is clear that property holds by taking $T_{1,1}(\rho )=\rho $, $\rho >0$.
Suppose that \eqref{formderiv} is satisfied for a certain $m\in \mathbb N$ and that $T_{m,k}$, $k=1,\ldots, m$, is given by \eqref{Tmk}.

Then, we can write
\begin{align*}
 \Big(\frac{1}{\sinh \rho }\frac{\partial}{\partial \rho}\Big)^{m+1}&=\sum_{k=1}^m\left[\frac{\rho T_{m,k}(\rho )}{(\sinh \rho)^{2m+1-k}}\Big(\frac{1}{\rho }\frac{\partial }{\partial \rho} \Big)^{k+1}+ \frac{T_{m,k}'(\rho )}{(\sinh \rho)^{2m+1-k}}\Big(\frac{1}{\rho }\frac{\partial}{\partial \rho }\Big)^k\right.\\
 &\quad \left.-(2m-k)\frac{T_{m,k}(\rho )\cosh \rho }{(\sinh \rho)^{2m+2-k}}\Big(\frac{1}{\rho }\frac{\partial }{\partial \rho}\Big)^k\right]\\
 &=\sum_{k=2}^{m+1}\frac{\rho T_{m,k-1}(\rho )}{(\sinh \rho)^{2m+2-k}}\Big(\frac{1}{\rho }\frac{\partial }{\partial \rho}\Big)^k\\
 &\quad + \sum_{k=1}^m\frac{T_{m,k}'(\rho )\sinh \rho -(2m-k)T_{m,k}(\rho )\cosh \rho }{(\sinh \rho)^{2m+2-k}}\Big(\frac{1}{\rho}\frac{\partial}{\partial \rho}\Big)^k\\
 &=\sum_{k=1}^{m+1}\frac{T_{m+1,k}(\rho )}{(\sinh \rho )^{2(m+1)-k}}\Big(\frac{1}{\rho}\frac{\partial}{\partial \rho}\Big)^k,
\end{align*}
where
\begin{align}\label{T1}
&T_{m+1,1}(\rho )=T_{m,1}'(\rho )\sinh \rho -(2m-1)T_{m,1}(\rho )\cosh \rho,\quad \rho >0,\\
&T_{m+1,m+1}(\rho )=\rho T_{m,m}(\rho)=\rho ^{m+1},\quad \rho >0,\label{Tm}
\end{align}
and, for $k=2,\ldots, m$, 
\begin{equation}\label{Tk}
T_{m+1,k}(\rho )=\rho T_{m,k-1}(\rho )+T_{m,k}'(\rho )\sinh \rho -(2m-k)T_{m,k}(\rho )\cosh \rho,\quad \rho >0.
\end{equation}
Let $k=1,\ldots,m$. From \eqref{Tmk} we get
$$
T_{m,k}'(\rho )=(Q_k^{m,k})'(\rho )\rho ^k+\sum_{j=0}^{k-1}\big((Q_j^{m,k})'(\rho )+(j+1)Q_{j+1}^{m,k}(\rho )\big)\rho ^j,\quad \rho >0.
$$
Then, $T_{m+1,k}(\rho)=\sum_{j=0}^kQ_j^{m+1,k}(\rho )\rho ^j$, $\rho >0$, $k=1,\ldots, m+1$, where, for each $\rho >0$,
\begin{align}\label{Q11}
Q_j^{m+1,m+1}(\rho)&=\left\{
\begin{array}{ll}
0,&j=0,\ldots, m,\\[0.2cm]
1,&j=m+1,
\end{array}
\right.\nonumber\\
 Q_j^{m+1,1}(\rho)&=\left\{
\begin{array}{ll}
((Q_0^{m,1})'(\rho)+Q_1^{m,1}(\rho ))\sinh \rho-(2m-1)Q_0^{m,1}(\rho )\cosh \rho,&j=0,\\[0.2cm]
(Q_1^{m,1})'(\rho )\sinh \rho-(2m-1)Q_1^{m,1}(\rho )\cosh \rho,&j=1,
\end{array}
\right.
\end{align}
and, for $k=2,\ldots, m$,
\begin{equation}\label{Qkk}
Q_j^{m+1,k}(\rho)=\left\{
\begin{array}{ll}
((Q_0^{m,k})'(\rho)+Q_1^{m,k}(\rho ))\sinh \rho-(2m-k)Q_0^{m,k}(\rho )\cosh \rho,&j=0,\\[0.2cm]
Q_{k-1}^{m,k-1}(\rho )+(Q_k^{m,k})'(\rho )\sinh \rho-(2m-k)Q_k^{m,k}(\rho )\cosh \rho,&j=k,
\end{array}
\right.
\end{equation}
and, if $j=1,\ldots, k-1$, 
$$
Q_j^{m+1,k}(\rho)=Q_{j-1}^{m,k-1}(\rho)+((Q_j^{m,k})'(\rho)+(j+1)Q_{j+1}^{m,k}(\rho ))\sinh \rho-(2m-k)Q_j^{m,k}(\rho )\cosh \rho.
$$

Since for each $j=0,\ldots, k$, we have that $Q_j^{m,k}(\rho)=R_j^{m,k}(\sinh \rho ,\cosh \rho )$, $\rho >0$, where $R_j^{m,k}$ is a homogeneous polynomial of degreee $m-k$, and, for every $r=0,\ldots, m-k$, and $\rho >0$,
$$
\frac{d}{d\rho }\big [(\sinh \rho )^r(\cosh \rho )^{m-k-r}\big]=r(\sinh \rho )^{r-1}(\cosh \rho )^{m+1-k-r}+(m-k-r)(\sinh \rho )^{r+1}(\cosh \rho )^{m-k-r-1},
$$
it is clear that $(Q_j^{m,k})'\sinh \rho$, $Q_j^{m,k}\sinh \rho$, and $Q_j^{m,k}\cosh \rho$, $j=0,\ldots, k$, can be represented by means of homogeneous polynomials of degree $m+1-k$. Also, $R_j^{m,k-1}$, $j=0,\ldots, k-1$, and $k=2,\ldots, m$, are homogeneous polynomials of degree $m-(k-1)=m+1-k$. By using these arguments and from \eqref{Q11} and \eqref{Qkk} we conclude that for $k=1,\ldots, m+1$, and $j=0,\ldots, k$, $Q_j^{m+1, k}(\rho )=R_j^{m+1,k}(\sinh \rho ,\cosh \rho )$, $\rho >0$, with $R_j^{m+1,k}$ a homogeneous polynomial of degree $m+1-k$. 

Let us next prove $(i)$. We again proceed by induction on $m\in \mathbb N$, and use \eqref{T1}-\eqref{Tk}. It is clear for $m=1$. Suppose that $(i)$ holds for certain $m\in \mathbb N$. Note that, since $T_{m+1,m+1}(\rho )=\rho ^{m+1}$, $\rho >0$, it follows that $b_0^{m+1,m+1}=1$, and $b_i^{m+1,m+1}=0$, $i\in \mathbb N$. On the other hand by \eqref{T1} we get
\begin{align*}
    T_{m+1,1}(\rho )&=\sinh \rho \sum_{i=0}^\infty b_i^{m,1}(2m-1+2i)\rho ^{2m-2+2i}-(2m-1)\cosh \rho \sum_{i=0}^\infty b_i^{m,1}\rho ^{2m-1+2i}\\
    &=\sum_{i,r=0}^\infty b_i^{m,1}\frac{2m-1+2i}{(2r+1)!}\rho ^{2m-1+2(i+r)}-\sum_{i,r=0}^\infty b_i^{m,1}\frac{2m-1}{(2r)!}\rho ^{2m-1+2(i+r)}\\
    &=(2m-1)b_0^{m,1}\sum_{r=1}^\infty \Big(\frac{1}{(2r+1)!}-\frac{1}{(2r)!}\Big)\rho ^{2m-1+2r}\\
    &\quad +\sum_{i=1}^\infty b_i^{m,1}\sum_{r=0}^\infty \Big(\frac{2m-1+2i}{(2r+1)!}-\frac{2m-1}{(2r)!}\Big)\rho ^{2m-1+2(i+r)}\\
    &=(2m-1)b_0^{m,1}\sum_{r=0}^\infty \Big(\frac{1}{(2r+3)!}-\frac{1}{(2r+2)!}\Big)\rho ^{2m+1+2r}\\
    &\quad +\sum_{i=0}^\infty b_{i+1}^{m,1}\sum_{r=0}^\infty \Big(\frac{2m+1+2i}{(2r+1)!}-\frac{2m-1}{(2r)!}\Big)\rho ^{2m+1+2(i+r)}\\
     &=(2m-1)b_0^{m,1}\sum_{r=0}^\infty \Big(\frac{1}{(2r+3)!}-\frac{1}{(2r+2)!}\Big)\rho ^{2m+1+2r}\\
    &\quad +\sum_{r=0}^\infty \sum_{i=r}^\infty b_{i-r+1}^{m,1}\Big(\frac{2m+1+2(i-r)}{(2r+1)!}-\frac{2m-1}{(2r)!}\Big)\rho ^{2m+1+2i}\\
     &=(2m-1)b_0^{m,1}\sum_{r=0}^\infty \Big(\frac{1}{(2r+3)!}-\frac{1}{(2r+2)!}\Big)\rho ^{2m+1+2r}\\
    &\quad +\sum_{i=0}^\infty \sum_{r=0}^i b_{i-r+1}^{m,1}\Big(\frac{2m+1+2(i-r)}{(2r+1)!}-\frac{2m-1}{(2r)!}\Big)\rho ^{2m+1+2i}\\
    &=\sum_{i=0}^\infty b_i^{m+1,1}\rho ^{2(m+1)-1+2i},\quad \rho >0,
\end{align*}
for certain $(b_i^{m+1,1})_{i=0}^\infty \subseteq \mathbb R$, being the series absolutely convergent for each $\rho >0$. Analogously, when $k=2,\ldots, m$, by using \eqref{Tk} we can write
\begin{align*}
T_{m+1,k}(\rho )&=\sum_{i=0}^\infty b_i^{m,k-1}\rho ^{2m-(k-1)+2i+1}+\sinh \rho \sum_{i=0}^\infty b_i^{m,k}(2m-k+2i)\rho ^{2m-k+2i-1}\\
&\quad -(2m-k)\cosh \rho \sum_{i=0}^\infty b_i^{m,k}\rho ^{2m-k+2i} \\
&=\sum_{i=0}^\infty b_i^{m,k-1}\rho ^{2(m+1)-k+2i}+\sum_{i,r=0}^\infty b_i^{m,k}\Big(\frac{2m-k+2i}{(2r+1)!}-\frac{2m-k}{(2r)!}\Big)\rho ^{2m-k+2(i+r)}\\
&=\sum_{i=0}^\infty b_i^{m,k-1}\rho ^{2(m+1)-k+2i}+b_0^{m,k}(2m-k)\sum_{r=1}^\infty  \Big(\frac{1}{(2r+1)!}-\frac{1}{(2r)!}\Big)\rho ^{2m-k+2r}\\
&\quad +\sum_{i=1}^\infty\sum_{r=0}^\infty b_i^{m,k}\Big(\frac{2m-k+2i}{(2r+1)!}-\frac{2m-k}{(2r)!}\Big)\rho ^{2m-k+2(i+r)}\\
&=\sum_{i=0}^\infty b_i^{m,k-1}\rho ^{2(m+1)-k+2i}+b_0^{m,k}(2m-k)\sum_{r=0}^\infty  \Big(\frac{1}{(2r+3)!}-\frac{1}{(2r+2)!}\Big)\rho ^{2m-k+2r+2}\\
&\quad +\sum_{i=0}^\infty\sum_{r=0}^\infty b_{i+1}^{m,k}\Big(\frac{2m-k+2i+2}{(2r+1)!}-\frac{2m-k}{(2r)!}\Big)\rho ^{2m-k+2(i+r)+2}\\
&=\sum_{i=0}^\infty b_i^{m,k-1}\rho ^{2(m+1)-k+2i}+b_0^{m,k}(2m-k)\sum_{r=0}^\infty  \Big(\frac{1}{(2r+3)!}-\frac{1}{(2r+2)!}\Big)\rho ^{2(m+1)-k+2r}\\
&\quad +\sum_{i=0}^\infty\sum_{r=0}^ib_{i-r+1}^{m,k}\Big(\frac{2m-k+2(i-r)+2}{(2r+1)!}-\frac{2m-k}{(2r)!}\Big)\rho ^{2(m+1)-k+2i}\\
&=\sum_{i=0}^\infty b_i^{m+1,k}\rho ^{2(m+1)-k+2i},\quad \rho >0,
\end{align*}
for certain $(b_i^{m+1,k})_{i=0}^\infty\subseteq \mathbb R$, for which  the series is absolutely convergent, for every $\rho >0$. Hence the proof of $(i)$ is finished.

To show $(ii)$ we fix $m\in \mathbb N$, $k=1,\ldots, m$, and $j=0,\ldots, k$, and  write
$$
Q_j^{m,k}(\rho )=\sum_{r=0}^{m-k}a_r^{m,k,j}(\sinh \rho )^{m-k-r}(\cosh \rho )^{r},\quad \rho >0,
$$
with $a_r^{m,k,j}\in \mathbb R$, $r=0,\ldots, m-k$. We observe that, for each $r=0,\ldots, m-k$,
$$
\frac{(\sinh \rho )^{m-k-r}(\cosh \rho )^r}{(\sinh \rho )^{2m-k}}=\frac{(\cosh \rho )^r}{(\sinh \rho )^{m+r}}=2^me^{-m\rho }\frac{(1+e^{-2\rho })^r}{(1-e^{-2\rho })^{m+r}},\quad \rho >0.
$$
Then, if $c_j^{m,k}=2^m\sum_{r=0}^{m-k}a_r^{m,k,j}$ we have that
$$
\frac{Q_j^{m,k}(\rho )}{(\sinh \rho )^{2m-k}}-c_j^{m,k}e^{-m\rho }=2^me^{-m\rho }\sum_{r=0}^{m-k}a_r^{m,k,j}\Big(\frac{(1+e^{-2\rho })^r}{(1-e^{-2\rho })^{m+r}}-1\Big),\quad \rho >0,
$$
and thus, when $\rho \geq 1$,
\begin{align*}
 \Big|\frac{Q_j^{m,k}(\rho )}{(\sinh \rho )^{2m-k}}-c_j^{m,k}e^{-m\rho }\Big|&\leq Ce^{-m\rho }\sum_{r=0}^{m-k}\frac{(1+e^{-2\rho })^r-(1-e^{-2\rho })^{m+r}}{(1-e^{-2\rho })^{m+r}}\\
 &\hspace{-3cm}\leq  Ce^{-m\rho }\sum_{r=0}^{m-k}\big((1+e^{-2\rho })^r-(1-e^{-2\rho })^{m+r}\big)\\
  &\hspace{-3cm}=Ce^{-m\rho }\sum_{r=0}^{m-k}\Big(\sum_{\ell =1}^r\binom{r}{\ell}e^{-2\ell\rho}-\sum_{\ell =1}^{m+r}(-1)^\ell \binom{m+r}{\ell}e^{-2\ell\rho }\Big)\leq Ce^{-(m+2)\rho },
\end{align*}
for certain $C>0$ that can depend on $m$ but not on $j$ or $k$. Thus,
we conclude that there exists $C>0$ independent of $j$ and $k$ such that
$$
 \Big|\frac{Q_j^{m,k}(\rho )}{(\sinh \rho )^{2m-k}}-c_j^{m,k}e^{-m\rho }\Big|\leq Ce^{-(m+2)\rho },\quad \rho \geq 1.
$$
Note that from this estimation property in $(iii)$ follows easily. 

Finally, let us show that, for every $m\in \mathbb N$, and $k=1,\ldots, m$,
\begin{equation}\label{cmk}
|c_k^{m,k}|=(-1)^{m+k}c_k^{m,k}.
\end{equation}
Recall that $c^{m,k}_k=2^m\sum_{r=0}^{m-k}a_r^{m,k,k}$. We proceed again by induction on $m$.  

Since $T_{1,1}(\rho )=\rho $, we have that $Q_1^{1,1}(\rho )=1$. Thus, $c_1^{1,1}=2$, so the property is verified for $m=1$. Moreover, it is obvious from \eqref{Tm} that, for each $m\in \mathbb N$, $Q^{m,m}_m(\rho )=1$, $\rho >0$. Hence, $c_m^{m,m}=2^m$ and \eqref{cmk} is trivially satisfied.

Suppose now that \eqref{cmk} holds for certain $m\in \mathbb N$, and each $k=1,\ldots, m$. From \eqref{Q11} we get
\begin{align*}
Q_1^{m+1,1}(\rho )&=\sum_{r=0}^{m-2}a_r^{m,1,1}(m-1-r)(\sinh \rho )^{m-1-r}(\cosh \rho )^{r+1}\\
&\quad +\sum_{r=1}^{m-1}ra_r^{m,1,1}(\sinh \rho )^{m+1-r}(\cosh \rho )^{r-1}\\
&\quad -(2m-1)\sum_{r=0}^{m-1}a_r^{m,1,1}(\sinh \rho )^{m-1-r}(\cosh \rho )^{r+1},\quad \rho >0.
\end{align*}
Thus, 
\begin{align*}
    2^{-(m+1)}c_1^{m+1,1}&=\sum_{r=0}^{m-2}a_r^{m,1,1}(m-1-r)+\sum_{r=1}^{m-1}ra_r^{m,1,1}-(2m-1)\sum_{r=0}^{m-1}a_r^{m,1,1}\\
    &=-m\sum_{r=0}^{m-1}a_r^{m,1,1}=-2^{-m}mc_1^{m,1},
\end{align*}
and it follows that $|c_1^{m+1,1}|=2m|c_1^{m,1}|=(-1)^{m+1}2mc_1^{m,1}=(-1)^{m+2}c_1^{m+1,1}$.

On the other hand, according to \eqref{Qkk}, for $k=2,\ldots, m$, we have that
\begin{align*}
Q_k^{m+1,k}(\rho )&=\sum_{r=0}^{m-k+1}a_r^{m,k-1,k-1}(\sinh \rho )^{m-k+1-r}(\cosh \rho )^r\\
&\quad +\sum_{r=0}^{m-k-1}a_r^{m,k,k}(m-k-r)(\sinh \rho )^{m-k-r}(\cosh \rho )^{r+1}\\
&\quad +\sum_{r=1}^{m-k}ra_r^{m,k,k}(\sinh \rho )^{m-k-r+2}(\cosh \rho )^{r-1}\\
&\quad -(2m-k)\sum_{r=0}^{m-k}a_r^{m,k,k}(\sinh \rho )^{m-k-r}(\cosh \rho )^{r+1},\quad \rho >0.
\end{align*}
Then, for every $k=2,\ldots, m$,
\begin{align*}
    2^{-(m+1)}c_k^{m+1,k}&\\
    &\hspace{-1cm}=\sum_{r=0}^{m-k+1}a_r^{m,k-1,k-1}+\sum_{r=0}^{m-k-1}a_r^{m,k,k}(m-k-r)+\sum_{r=1}^{m-k}ra_r^{m,k,k}-(2m-k)\sum_{r=0}^{m-k}a_r^{m,k,k}\\
    &\hspace{-1cm}=2^{-m}c_{k-1}^{m,k-1}-m\sum_{r=0}^{m-k}a_r^{m,k,k}=2^{-m}c_{k-1}^{m,k-1}-2^{-m}mc_k^{m,k}\\\
    &\hspace{-1cm}=(-1)^{m+k+1}2^{-m}(|c_{k-1}^{m,k-1}|+m|c_k^{m,k}|),
\end{align*}
that is, $|c_k^{m+1,k}|=2(|c_k^{m,k-1}|+m|c_k^{m,k}|)=(-1)^{m+k+1}c_k^{m+1,k}$. 
\end{proof}
The next two results will be used in the proof of Theorem \ref{Th1.3}.
\begin{lem}\label{FadiBruno}
    Let $k\in \mathbb N$, and $\alpha,\beta>0$. If $g(u)=e^{-\alpha\sqrt{u^2+\beta}}(u^2+\beta)^{-1/2}$, then
    $$
    \Big(\frac{1}{u}\frac{d}{du}\Big)^kg(u)=g(u)\sum_{(s_1,\ldots, s_k,\ell)\in \mathcal I_k}\frac{b_{s_1,\ldots, s_k,\ell}^{(\alpha)}}{(u^2+\beta)^{k-\frac{1}{2}(\sum_{j=1}^ks_j-\ell)}},\quad u>0,
    $$
    for certain $b_{s_1,\ldots, s_k,\ell}^{(\alpha)}\in \mathbb R$, $(s_1,\ldots, s_k,\ell )\in \mathcal I_k$, being $\mathcal I_k=\{(s_1,\ldots, s_k,\ell)\in (\mathbb N\cup \{0\})^{k+1}: \,\sum_{j=1}^kjs_j=k, \mbox{ \it and }\ell =0,\ldots, s_1+\cdots+s_k\}$. 
\end{lem}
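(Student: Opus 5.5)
We argue by induction on $k\in\mathbb N$, using the substitution $v=u^2+\beta$. Since $\frac{1}{u}\frac{d}{du}=2\frac{d}{dv}$ when acting on functions of $v=u^2+\beta$, and $g(u)=h(v)$ with $h(v)=e^{-\alpha v^{1/2}}v^{-1/2}$, the claim is equivalent to
$$
\Big(\frac{1}{u}\frac{d}{du}\Big)^kg(u)=2^kh^{(k)}(v)=h(v)\sum_{(s_1,\ldots,s_k,\ell)\in\mathcal I_k}\frac{b^{(\alpha)}_{s_1,\ldots,s_k,\ell}}{v^{k-\frac12(\sum_j s_j-\ell)}}.
$$
The index set $\mathcal I_k$ with $\sum_j js_j=k$ is exactly the set of partitions appearing in Fa\`a di Bruno's formula, which suggests the direct route below.

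Write $h=\phi\cdot e^{\psi}$ with $\psi(v)=-\alpha v^{1/2}$ and $\phi(v)=v^{-1/2}$. Fa\`a di Bruno gives
$$
\frac{d^k}{dv^k}e^{\psi(v)}=e^{\psi(v)}\sum_{\sum js_j=k}\frac{k!}{\prod_j s_j!\,(j!)^{s_j}}\prod_{j=1}^k\big(\psi^{(j)}(v)\big)^{s_j}.
$$
Since $\psi^{(j)}(v)=c_j\alpha v^{1/2-j}$, each product equals a constant times $\alpha^{\sum s_j}v^{\frac12\sum s_j-k}=\alpha^{\sum s_j}v^{-(k-\frac12\sum s_j)}$. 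Thus $(e^\psi)^{(k)}$ is a combination of terms $e^{\psi}v^{-(k-\frac12\sum s_j)}$, which is the case $\ell=0$, up to the factor $v^{-1/2}$.

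Next apply the Leibniz rule to $\phi e^\psi$:
$$
h^{(k)}=\sum_{i=0}^k\binom{k}{i}\phi^{(k-i)}(e^\psi)^{(i)},
$$
with $\phi^{(k-i)}=c\,v^{-1/2-(k-i)}$. A term with $i<k$ has the form $h(v)\,v^{-(k-\frac12\sum_j s'_j)}$, where $(s'_j)$ is a partition of $i$. This does not yet match the stated indexing (partitions of $k$ together with $\ell\le\sum s_j$), so I would instead do the bookkeeping by induction on $k$, which reproduces the index set more naturally.

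Assume that $2^{k}h^{(k)}/h$ is a sum of terms $b\,v^{-(k-\frac12(S-\ell))}$ with $S=\sum s_j$ and $(s,\ell)\in\mathcal I_k$. Differentiating $h\cdot v^{-a}$ with $a=k-\frac12(S-\ell)$ gives
$$
h\,v^{-a}\Big(-\tfrac{\alpha}{2}v^{-1/2}-\tfrac12 v^{-1}-a\,v^{-1}\Big).
$$
The exponents transform as follows:
\begin{itemize}
\item the term with $-\tfrac{\alpha}{2}v^{-1/2}$ gives $a+\frac12=(k+1)-\frac12((S+1)-\ell)$, matching the new partition obtained by adding a part of size $1$ ($s_1\mapsto s_1+1$) with $\ell$ unchanged;
\item the term with $v^{-1}$ gives $a+1=(k+1)-\frac12((S+1)-(\ell+1))$, matching $s_1\mapsto s_1+1$ with $\ell\mapsto\ell+1$, where $\ell+1\le S+1$.
\end{itemize}
The new tuples have weight $\sum js_j=k+1$, so all terms of order $k+1$ are indexed by elements of $\mathcal I_{k+1}$. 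The coefficients need not vanish on the unused tuples; we may simply set them to zero, since the statement only asserts existence of some real coefficients. The base case $k=1$ is $2h'=h\,(-\alpha v^{-1/2}-v^{-1})$, which corresponds to $(s_1,\ell)=(1,0)$ and $(1,1)$.

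The only real obstacle is this bookkeeping, namely checking that the exponents produced at each step land in the form $k-\frac12(\sum s_j-\ell)$ with $0\le\ell\le\sum s_j$. The computation above shows that they do. Since the set $\mathcal I_k$ is only a superset of the indices actually needed, no sharper matching is required, and the coefficients $b^{(\alpha)}$ depend only on $\alpha$ and $k$, as the statement requires.
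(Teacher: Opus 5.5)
Your proof is correct, but it takes a different route from the paper. The paper does not induct on $k$. It writes $g(u)=F(u^2)$ with $F(v)=f(\sqrt{v+\beta})$ and $f(w)=e^{-\alpha w}/w$, uses $\big(\frac1u\frac{d}{du}\big)^k[F(u^2)]=2^kF^{(k)}(u^2)$, and applies Fa\`a di Bruno's formula to the composition of $f$ with $v\mapsto\sqrt{v+\beta}$. There the partition $(s_1,\ldots,s_k)$ comes from the chain rule, with each factor $\frac{d^j}{dv^j}\sqrt{v+\beta}$ contributing $(v+\beta)^{1/2-j}$. The index $\ell$ labels the terms of $f^{(S)}(w)=e^{-\alpha w}\sum_{\ell=0}^{S}a_\ell w^{-\ell-1}$, where $S=\sum_j s_j$. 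This is exactly where $\mathcal I_k$ in the statement comes from.

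Your induction on the logarithmic derivative $h'/h=-\frac{\alpha}{2}v^{-1/2}-\frac12v^{-1}$ is more elementary and self-contained: it needs neither Fa\`a di Bruno nor a separate expansion of $f^{(S)}$. It also proves a sharper statement. Each step only raises $s_1$, so only $s=(k,0,\ldots,0)$ occurs, that is, $2^kh^{(k)}=h\sum_{\ell=0}^{k}b_\ell v^{-(k+\ell)/2}$. Hence $\mathcal I_k$ is just a redundant parametrization of the exponent set $\{\frac k2,\frac{k+1}{2},\ldots,k\}$. Your form would simplify the bookkeeping with $\Omega_m$ and $\Lambda_m$ in the proof of Theorem~\ref{Th1.3}. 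The paper's route, on the other hand, explains why the lemma is stated with the index set $\mathcal I_k$.

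Both arguments give coefficients independent of $\beta$, and both remain valid for $\beta=0$. This matters because the lemma is later applied with $\beta=t^2$, $t\ge 0$.

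Incidentally, your abandoned Leibniz detour also closes: pad the partition of $i$ with $k-i$ parts equal to $1$ and take $\ell=k-i$.
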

\begin{proof}
First observe that if $f$ is a $k$-times derivable function in $(0,\infty)$ then 
$$
\Big(\frac{1}{u}\frac{d}{du}\Big)^k[f(u^2)]=2^k\frac{d^k}{ds^k}f(s)_{|s=u^2},\quad u>0,
$$ 
and by virtue of Fa\`a di Brunos's formula, 
$$
\frac{d^k}{dv^k}[f(\sqrt{v+\beta})]=\sum_{\substack{(s_1,\ldots, s_k)\in (\mathbb N\cup\{0\})^k\\s_1+2s_2+\cdots+ks_k=k}}\frac{k!}{s_1!\cdots s_k!1!^{s_1}\cdots k!^{s_k}}f^{(s_1+\cdots +s_k)}(\sqrt{v+\beta})\prod_{j=1}^k\frac{b_j}{(v+\beta)^{(j-1/2)s_j}},
$$
where $b_j\in \mathbb R$, $j=1,\ldots, k$.

Since $g(u)=f(\sqrt{u^2+\beta})$, where $f(u)=\frac{e^{-\alpha u}}{u}$, $u>0$, we get 
\begin{align*}
    \Big(\frac{1}{u}\frac{d}{du}\Big)^kg(u)&=2^k\sum_{\substack{(s_1,\ldots,s_k)\in (\mathbb N\cup\{0\})^k\\s_1+2s_2+\cdots+ks_k=k}}\frac{c_{s_1,\ldots, s_k}}{(u^2+\beta)^{k-\frac{1}{2}\sum_{j=1}^ks_j}}\\
    &\quad \times \sum_{\ell =0}^{s_1+\cdots+s_k}a_{s_1,\ldots,s_k,\ell}^{(\alpha)} e^{-\alpha\sqrt{u^2+\beta}}(u^2+\beta)^{-\frac{1}{2}(\ell +1)}\\
    &=\sum_{(s_1,\ldots, s_k,\ell )\in \mathcal I_k}b_{s_1,\ldots, s_k,\ell}^{(\alpha)}\frac{e^{-\alpha\sqrt{u^2+\beta}}}{(u^2+\beta)^{k+\frac{1}{2}-\frac{1}{2}(\sum_{j=1}^ks_j-\ell)}},\quad u>0,
\end{align*}
and the result is obtained. Here $c_{s_1,\ldots, s_k}$, $a_{s_1,\ldots, s_k,\ell}^{(\alpha)}$, and $b_{s_1,\ldots, s_k,\ell}^{(\alpha)}$ are suitable real numbers, for each  $(s_1,\ldots, s_k,\ell )\in \mathcal I_k$. 
\end{proof}
\begin{lem}\label{psivarphi}
Let $m\in \mathbb N$. Define the functions $\psi$ and $\varphi$  defined by
$$
\psi (t)=\int_t^1\frac{\rho^{2m}}{(\rho^2+t^2)^{m+1/2}}d\rho\quad \mbox{ and }\quad \varphi (t)=\int_t^1\int_\rho ^\infty \frac{r\rho ^{2m-1}}{\sqrt{r^2-\rho ^2}(r^2+t^2)^{m+1/2}}drd\rho,\quad t\in (0,1).
$$
Then,
\begin{equation}\label{psi}
\lim_{t\rightarrow 0^+}\big(\psi (t)+\log t\big)=\alpha _m,
\end{equation}
and 
\begin{equation}\label{varphi}
\lim_{t\rightarrow 0^+}\big(\varphi (t)+\sigma _m\log t\big)=\beta _m
\end{equation}
where, 
$$
\alpha _m=\int_0^1\Big(\frac{1}{(1+u^2)^{m+1/2}}-1\Big)\frac{du}{u},\quad \beta_m=\int_0^1\frac{H_m(u)-H_m(0)}{u}du, \quad \sigma_m=H_m(0),
$$
and
$$
H_m(u)=\int_1^\infty \frac{s}{\sqrt{s^2-1}(s^2+u^2)^{m+1/2}}ds,\quad u\in \mathbb R.
$$
\end{lem}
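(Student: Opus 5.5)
Both limits come from the same device: rescale the inner variable by $t$, so that each integral becomes an integral in $d\rho/\rho$ of a function with a finite limit at $\rho=0$, and then isolate the logarithm.

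\emph{Proof of \eqref{psi}.} Substituting $\rho=tu$ gives
$$
\psi(t)=\int_1^{1/t}\frac{u^{2m}}{(1+u^2)^{m+1/2}}\,du .
$$
The integrand tends to $1/u$ as $u\to\infty$. I would therefore write
$$
\psi(t)=\int_1^{1/t}\frac{du}{u}+\int_1^{1/t}\Big(\frac{u^{2m+1}}{(1+u^2)^{m+1/2}}-1\Big)\frac{du}{u}.
$$
The first integral equals $-\log t$. In the second, substitute $u=1/v$. Since $\frac{u^{2m+1}}{(1+u^2)^{m+1/2}}=(1+v^2)^{-m-1/2}$ and $du/u=-dv/v$, the second term becomes
$$
\int_t^1\Big(\frac{1}{(1+v^2)^{m+1/2}}-1\Big)\frac{dv}{v}.
$$
The integrand is $O(v)$ near $0$, so this integral converges to $\alpha_m$ as $t\to0^+$. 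This proves \eqref{psi}.

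\emph{Proof of \eqref{varphi}.} In the inner integral substitute $r=\rho s$:
$$
\int_\rho^\infty\frac{r\,dr}{\sqrt{r^2-\rho^2}\,(r^2+t^2)^{m+1/2}}=\rho^{-2m}\int_1^\infty\frac{s\,ds}{\sqrt{s^2-1}\,(s^2+t^2/\rho^2)^{m+1/2}}=\rho^{-2m}H_m(t/\rho).
$$
Hence
$$
\varphi(t)=\int_t^1 H_m(t/\rho)\,\frac{d\rho}{\rho}.
$$
Now substitute $u=t/\rho$, so that $d\rho/\rho=-du/u$ and $u$ runs over $(t,1)$. This gives
$$
\varphi(t)=\int_t^1 H_m(u)\,\frac{du}{u}=-H_m(0)\log t+\int_t^1\frac{H_m(u)-H_m(0)}{u}\,du .
$$
With $\sigma_m=H_m(0)$, it remains only to show that $\int_0^1 |H_m(u)-H_m(0)|\,u^{-1}\,du<\infty$.

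\emph{The only point needing care.} First, $H_m(0)$ is finite because $m\ge1$: the integrand behaves like $(s-1)^{-1/2}$ near $s=1$ and like $s^{-2m-1}$ at infinity, both integrable. Second, by the mean value theorem applied in $u^2$,
$$
\big|(s^2+u^2)^{-m-1/2}-s^{-2m-1}\big|\le (m+\tfrac12)\,u^2\,s^{-2m-3}.
$$
Therefore $|H_m(u)-H_m(0)|\le C u^2$ for $u\in[0,1]$. The integrand $(H_m(u)-H_m(0))/u$ is then $O(u)$ near $0$, and dominated convergence gives convergence to $\beta_m$, which proves \eqref{varphi}. Beyond this routine estimate, the argument is just the two scaling substitutions.
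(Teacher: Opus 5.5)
Your proof is correct and follows essentially the same route as the paper: the substitution $u=t/\rho$ (which your two steps $\rho=tu$, $u=1/v$ compose to in the $\psi$ case) turns both integrals into $\int_t^1 g(u)\,\frac{du}{u}$, with $g(u)=(1+u^2)^{-m-1/2}$ or $g=H_m$, and subtracting $g(0)$ isolates the term $-g(0)\log t$. The remaining integrand is then $O(u)$ by the same mean value estimate $|g(u)-g(0)|\le Cu^2$, so dominated convergence finishes the argument exactly as in the paper.
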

\begin{proof}
To prove \eqref{psi} we write
\begin{align*}
    \psi (t)&=\int_t^1\frac{1}{(1+t^2/\rho ^2)^{m+1/2}}\frac{d\rho}{\rho }=\int_t^1\frac{1}{(1+u^2)^{m+1/2}}\frac{du}{u}=\int_t^1\Big(\frac{1}{(1+u^2)^{m+1/2}}-1\Big)\frac{du}{u}+\int_t^1\frac{du}{u}\\
    &=\int_t^1\Big(\frac{1}{(1+u^2)^{m+1/2}}-1\Big)\frac{du}{u}-\log t,\quad t\in (0,1).
\end{align*}
By the mean value theorem,
$$
\left|\Big(\frac{1}{(1+u^2)^{m+1/2}}-1\Big)\frac{1}{u}\right|=\frac{|1-(1+u^2)^{m+1/2}|}{u(1+u^2)^{m+1/2}}\leq C\frac{u}{1+u^2}\leq Cu,\quad u\in (0,1),
$$
and the conclusion follows from the dominated convergence theorem.

To prove \eqref{varphi} we first verify that the integral defining $\beta_m$ is convergent. Indeed, the mean value theorem allows us to write  
\begin{align*}
\frac{|H_m(u)-H_m(0)|}{u}&=\frac{1}{u}\left|\int_1^\infty \frac{s}{\sqrt{s^2-1}}\frac{s^{2m+1}-(s^2+u^2)^{m+1/2}}{s^{2m+1}(s^2+u^2)^{m+1/2}}ds\right|\\
&\leq Cu\int_1^\infty  \frac{1}{\sqrt{s^2-1}}\frac{(s^2+u^2)^{m-1/2}}{s^{2m}(s^2+u^2)^{m+1/2}}ds\\
&\leq Cu\int_1^\infty \frac{ds}{\sqrt{s-1}s^{2m+2}}\leq Cu,\quad u\in (0,1).
\end{align*}
Now, adequate changes of variables lead to
\begin{align*}
\varphi (t)&=\int_t^1\int_1 ^\infty \frac{s\rho ^{2m}}{\sqrt{s^2-1}(s^2\rho ^2+t^2)^{m+1/2}}dsd\rho=\int_t^1\int_1 ^\infty \frac{s}{\sqrt{s^2-1}(s^2+u^2)^{m+1/2}}\frac{dsdu}{u}\\
&=\int_t^1H_m(u)\frac{du}{u}=\int_t^1\frac{H_m(u)-H_m(0)}{u}du-H_m(0)\log t,\quad t\in (0,1),
\end{align*}
and \eqref{varphi} follows.
\end{proof}

Finally, as we have seen, the modified Bessel function $K_\nu $, $\nu >0$, arises naturally in the hyperbolic setting.  We collect the following fundamental properties for this function (see, for example,  \cite[Chapter 5]{Leb}).
\begin{equation}\label{Kderiv}
\Big(\frac{1}{z}\frac{\partial }{\partial z}\Big)^r[z^{-\nu }K_\nu (z)]=(-1)^rz^{-(\nu +r)}K_{\nu +r}(z),\quad z>0.
\end{equation}
\begin{equation}\label{cotaKinfty}
\lim_{z\rightarrow \infty }\frac{K_\nu (z)}{\sqrt{\frac{\pi}{2z}}e^{-z}}=1,\quad \nu>0.
\end{equation}
\begin{equation}\label{asintotico}
K_\nu (z)-\sqrt{\frac{\pi}{2z}}e^{-z}=O\Big(\frac{e^{-z}}{z^{3/2}}\Big),\quad \mbox{ as }z\rightarrow \infty.
\end{equation}

\section{Proof of Theorem \ref{Th1.1}}\label{S3}

Let $f\in C_c(\mathbb H^n)\cap {\rm Lip}_{\rm loc}^\alpha (\mathbb H^n)$. For each $s\in (0,\alpha/p')$, where $p'$ denotes the conjugate exponent of $p$, we have that 
$$
(-\Delta_{\mathbb H^n})_p^sf(x)=\int_{\mathbb H^n}\Phi _p(f(x)-f(y))\mathcal K_{n,s,p}(d_{\mathbb H^n}(x,y))dy,\quad x\in \mathbb H^n,
$$
being the integral absolutely convergent for every $x\in \mathbb H^n$. Indeed, fix $x\in \mathbb H^n$ and take $R\geq 1$ such that $\supp f\subseteq B(x,R)$. We consider the following two integrals
\begin{equation}\label{I1}
I_1(s)=\int_{B(x,R)}\Phi _p(f(x)-f(y))\mathcal K_{n,s,p}(d_{\mathbb H^n}(x,y))dy,\quad s\in (0,\alpha/p'), 
\end{equation}
and
\begin{equation}\label{I2}
I_2(s)=\int_{\mathbb H^n\setminus B(x,R)}\Phi _p(f(x)-f(y))\mathcal K_{n,s,p}(d_{\mathbb H^n}(x,y))dy,\quad s\in (0,\alpha/p').
\end{equation}
By taking into account that $f\in {\rm Lip }_{\rm loc}^\alpha (\mathbb H^n)$ we have that
$$
    |I_1(s)|\leq C\|f\|_{{\rm Lip}^\alpha (B(x,R))}^{p-1}\int_{B(x,R)}(d_{\mathbb H^n}(x,y))^{\alpha (p-1)}\mathcal K_{n,s,p}(d(x,y))dy.
$$
Let $g_x\in SO(1,n)$ such that $g_x(e_0)=x$. Since $g_x$ is an isometry we can write
$$
|I_1(s)|\leq C\int_{B(e_0,R)}(d_{\mathbb H^n}(e_0,z))^{\alpha (p-1)}\mathcal K_{n,s,p}(d_{\mathbb H^n}(e_0,z))dz\leq C\int_0^R\rho ^{\alpha (p-1)}\mathcal K_{n,s,p}(\rho )(\sinh \rho )^{n-1}d\rho .
$$
According to \eqref{K0} we obtain
\begin{align}\label{cotaI1}
|I_1(s)|&\leq Cs\int_0^R\rho ^{\alpha (p-1)-n-sp}(\sinh \rho )^{n-1}d\rho \leq Cs\int_0^R\rho ^{(\alpha/p'-s)p-1}d\rho \nonumber\\
&=C\frac{sR^{(\alpha/p'-s)p}}{(\alpha/p'-s)}<\infty,\quad 0<s<\frac{\alpha }{p'}.
\end{align}

On the other hand, since $f$ is bounded in $\mathbb H^n$, using \eqref{Kinfty} we get, for $s\in (0,\alpha/p')$, 
\begin{align*}
|I_2(s)|&\leq C\int_R^\infty \mathcal K_{n,s,p}(\rho )(\sinh \rho )^{n-1}d\rho \leq Cs\int_R^\infty \rho ^{-1-sp/2}e^{-(n-1)\rho }e^{(n-1)\rho }d\rho \\
&=Cs\int_R^\infty \rho ^{-1-sp/2}d\rho =\frac{C}{R^{sp/2}}<\infty.
\end{align*}

Our next objective is to see that there exists $A_{n,p}>0$ such that 
\begin{equation}\label{objlim}
\lim_{s\rightarrow 0^+}(-\Delta_{\mathbb H^n})_p^sf(x)=A_{n,p}\Phi _p(f(x)),\quad x\in\mathbb H^n.
\end{equation}

Let $x\in \mathbb H^n$ and choose, as before, $R\geq 1$ such that ${\rm supp }\,f\subseteq B(x,R)$. We write 
$$
(-\Delta_{\mathbb H^n})_p^sf(x)=I_1(s)+I_2(s), \quad s\in \big(0,\tfrac{\alpha}{p'}\big),
$$
where $I_1$ and $I_2$ are defined by \eqref{I1} and \eqref{I2}, respectively.

Note that, by virtue of \eqref{cotaI1}, $\lim_{s\rightarrow 0^+}I_1(s)=0$. On the other hand, taking into account that $f(y)=0$, $y\in \mathbb H^n\setminus B(x,R)$, we have  
\begin{equation}\label{I2b}
I_2(s)=\Phi _p(f(x))\int_{\mathbb H^n\setminus B(x,R)}\mathcal K_{n,s,p}(d_{\mathbb H^n}(x,y))dy,\quad s\in \big(0,\tfrac{\alpha }{p'}\big). 
\end{equation}
Thus, we have to show that there exists a constant $A_{n,p}>0$, not depending on $f$ or $x$, such that 
\begin{equation}\label{Anp}
    \lim_{s\rightarrow 0^+}\int_{\mathbb H^n\setminus B(x,R)}\mathcal K_{n,s,p}(d_{\mathbb H ^n}(x,y))dy=A_{n,p}.
\end{equation}

Note that
$$
\int_{\mathbb H^n\setminus B(x,R)}\mathcal K_{n,s,p}(d_{\mathbb H^n}(x,y))dy=c_n\int_R^\infty \mathcal K_{n,s,p}(\rho)(\sinh \rho )^{n-1}d\rho,
$$
where $c_n=2\pi^{n/2}/\Gamma (n/2)$.

\subsection{Case $n=2m+1$, $m\in \mathbb N$} Using Lemma \ref{key} we can write
\begin{align}\label{S1S2}
    \int_R^\infty \mathcal K_{n,s,p}(\rho )(\sinh \rho )^{n-1}d\rho&\nonumber\\
    &\hspace{-3cm}=(-1)^msa_{n,s,p}\sum_{k=1}^m\sum_{j=0}^{k-1}\int_R^\infty \frac{Q_j^{m,k}(\rho )\rho ^j}{(\sinh \rho )^{2m-k}}\Big(\frac{1}{\rho }\frac{\partial }{\partial \rho }\Big)^k\big[\rho ^{-\frac{1+sp}{2}}K_{\frac{1+sp}{2}}(m\rho )\big](\sinh \rho )^{2m}d\rho \nonumber\\
    &\hspace{-3cm}\quad +(-1)^msa_{n,s,p}\sum_{k=1}^m\int_R^\infty \frac{Q_k^{m,k}(\rho )\rho ^k}{(\sinh \rho )^{2m-k}}\Big(\frac{1}{\rho }\frac{\partial }{\partial \rho }\Big)^k\big[\rho ^{-\frac{1+sp}{2}}K_{\frac{1+sp}{2}}(m\rho )\big](\sinh \rho )^{2m}d\rho \nonumber\\
    &\hspace{-3cm}:=(-1)^msa_{n,s,p}(S_1(s)+S_2(s)),\quad s>0.
\end{align}
Let $k=1,\ldots, m$. For every $j=0,\ldots, k-1$, according to Lemma \ref{key} $(iii)$, \eqref{Kderiv}, and \eqref{cotaKinfty} we have
\begin{align}\label{sS1}
    \Big|\int_R^\infty \frac{Q_j^{m,k}(\rho )\rho ^j}{(\sinh \rho )^{2m-k}}\Big(\frac{1}{\rho }\frac{\partial }{\partial \rho }\Big)^k\big[\rho ^{-\frac{1+sp}{2}}K_{\frac{1+sp}{2}}(m\rho )\big](\sinh \rho )^{2m}d\rho\Big|&\nonumber\\
    &\hspace{-7cm}\leq C\int_R^\infty e^{m\rho }\rho ^{j-\frac{1+sp}{2}-k}|K_{\frac{1+sp}{2}+k}(m\rho)|d\rho\leq C\int_R^\infty \rho ^{j-k-1}d\rho =C\frac{R^{j-k}}{k-j}\leq C,
\end{align}
where $C$ does not depend on $s$. Then, 
\begin{equation}\label{S1}
\lim_{s\rightarrow 0^+}(-1)^msa_{n,s,p}S_1(s)=0.
\end{equation}
On the other hand, we use \eqref{Kderiv} and make the following decomposition:
\begin{align}\label{Iik}
    \int_R^\infty \frac{Q_k^{m,k}(\rho )\rho ^k}{(\sinh \rho )^{2m-k}}\Big(\frac{1}{\rho }\frac{\partial }{\partial \rho }\Big)^k\big[\rho ^{-\frac{1+sp}{2}}K_{\frac{1+sp}{2}}(m\rho )\big](\sinh \rho )^{2m}d\rho\nonumber\\
    &\hspace{-7cm}=(-1)^km^k\int_R^\infty Q_k^{m,k}(\rho )(\sinh \rho )^k\rho ^{-\frac{1+sp}{2}}K_{\frac{1+sp}{2}+k}(m\rho )d\rho\nonumber\\
    &\hspace{-7cm}=(-1)^km^k\int_R^\infty [Q_k^{m,k}(\rho )-c_k^{m,k}e^{-m\rho }(\sinh \rho )^{2m-k}](\sinh \rho )^k\rho ^{-\frac{1+sp}{2}}K_{\frac{1+sp}{2}+k}(m\rho )d\rho\nonumber\\
    &\hspace{-7cm}\quad +(-1)^km^kc_k^{m,k}\int_R^\infty e ^{-m\rho }(\sinh \rho )^{2m}\rho ^{-\frac{1+sp}{2}}\big[K_{\frac{1+sp}{2}+k}(m\rho )-\frac{\sqrt{\pi}}{\sqrt{2m\rho}}e^{-m\rho}\big]d\rho \nonumber\\
    &\hspace{-7cm}\quad +(-1)^km^kc_k^{m,k}\frac{\sqrt{\pi}}{\sqrt{2m}}\int_R^\infty e^{-2m\rho }\rho ^{-1-\frac{sp}{2}}\big[(\sinh \rho )^{2m}-\frac{e^{2m\rho}}{2^{2m}}\big]d\rho\nonumber\\
    &\hspace{-7cm}\quad +(-1)^km^kc_k^{m,k}\frac{\sqrt{\pi}}{2^{2m}\sqrt{2m}}\int_R^\infty \rho ^{-1-\frac{sp}{2}}d\rho\nonumber\\
    &\hspace{-7cm}=:\sum_{i=1}^4 I_i^{(k)}(s),\quad s>0.
\end{align}
It is clear that 
\begin{equation}\label{I4k}
I_4^{(k)}(s)=(-1)^km^kc_k^{m,k}\frac{\sqrt{2\pi}}{2^{2m}\sqrt{m}}\frac{R^{-\frac{sp}{2}}}{sp},\quad s>0.
\end{equation}
We claim that, when $i=1,2,3$,
\begin{equation}\label{I123}
\lim_{s\rightarrow 0^+}sI_i^{(k)}(s)=0.
\end{equation}
Hence, using also that $(-1)^{m+k}c_k^{m,k}=|c_k^{m,k}|$ (Lemma \ref{key} $(ii)$),
\begin{align}\label{nodd}
\lim_{s\rightarrow 0^+}(-1)^msa_{n,s,p}S_2(s)&=(-1)^mc_{n,p}\sum_{k=1}^m\lim_{s\rightarrow 0^+}sI_4^{(k)}(s)=\frac{\sqrt{2\pi}c_{n,p}}{2^{2m}\sqrt{m}p}\sum_{k=1}^m(-1)^{m+k}c_k^{m,k}m^k\nonumber\\
&=\frac{\sqrt{2\pi}c_{n,p}}{2^{2m}\sqrt{m}p}\sum_{k=1}^m|c_k^{m,k}|m^k,
\end{align}
which, jointly with \eqref{S1S2} and \eqref{S1}, leads to \eqref{Anp} with
\begin{equation}\label{Anpodd}
A_{n,p}=\frac{c_n\sqrt{2\pi}c_{n,p}}{2^{2m}\sqrt{m}p}\sum_{k=1}^m|c_k^{m,k}|m^k>0,\quad n=2m+1.
\end{equation}
Let us prove \eqref{I123}. According to Lemma \ref{key} $(ii)$ and \eqref{cotaKinfty} we obtain that
\begin{equation}\label{sI1k}
    |sI_1^{(k)}(s)|\leq Cs\int_R^\infty (\sinh \rho )^{2m}e^{-(m+2)\rho }\rho ^{-1-\frac{sp}{2}}e^{-m\rho}d\rho\leq \frac{Cs}{R}\int_R^\infty e^{-2\rho}d\rho\leq Cs,\quad  s\in (0,1).
\end{equation}
Using \eqref{asintotico} we get
\begin{equation}\label{sI2k}
|sI_2^{(k)}(s)|\leq Cs\int_R^\infty e^{-2m\rho }(\sinh \rho )^{2m}\rho ^{-2-\frac{sp}{2}}d\rho \leq Cs\int_R^\infty \rho ^{-2}d\rho =\frac{Cs}{R}\leq Cs, \quad s\in (0,1).
\end{equation}
Finally, since $(\sinh \rho )^{2m}=2^{-2m}e^{2m\rho}(1-e^{-2\rho })^{2m}$, $\rho >0$, it follows that
\begin{align}\label{sI3k}
|sI_3^{(k)}(s)|&\leq Cs\int_R^\infty \rho ^{-1-\frac{sp}{2}}\big(1-(1-e^{-2\rho })^{2m}\big)d\rho \leq Cs\int_R^\infty \rho ^{-1}\sum_{j=1}^{2m}(-1)^{j+1}\binom{2m}{j}e^{-2j\rho }d\rho\nonumber\\
&\leq \frac{Cs}{R}\int_R^\infty e^{-2\rho }d\rho\leq Cs,\quad s\in (0,1).
\end{align}
Note that the constants $C$ appearing in the above estimations can be choosen independent of $s$ and $R$. Hence, \eqref{I123} is established.

\subsection{Case $n=2m$, $m\in \mathbb N$}\label{Seven} Again, taking into account Lemma \ref{key} we write
\begin{align}\label{S1S2even}
    \int_R^\infty \mathcal K_{n,s,p}(\rho )(\sinh \rho )^{n-1}d\rho\\
    &\hspace{-3.5cm}=\frac{(-1)^msa_{n,s,p}}{\sqrt{\pi}}\int_R^\infty\int_\rho ^\infty \frac{(\sinh \rho )^{2m-1}\sinh r}{\sqrt{\cosh r-\cosh \rho}}\Big(\frac{1}{\sinh r }\frac{\partial }{\partial r}\Big)^m\big[r ^{-\frac{1+sp}{2}}K_{\frac{1+sp}{2}}\big(\tfrac{2m-1}{2}r\big)\big]drd\rho \nonumber\\
    &\hspace{-3.5cm}=\frac{(-1)^msa_{n,s,p}}{\sqrt{2\pi}}\int_R^\infty\int_\rho ^\infty \frac{(\sinh \rho )^{2m-1}\sinh r}{\sqrt{\sinh (\frac{r+\rho}{2})\sinh (\frac{r-\rho }{2})}}\Big(\frac{1}{\sinh r }\frac{\partial }{\partial r}\Big)^m\big[r ^{-\frac{1+sp}{2}}K_{\frac{1+sp}{2}}\big(\tfrac{2m-1}{2}r\big)\big]drd\rho \nonumber\\
    &\hspace{-3.5cm}=\frac{(-1)^msa_{n,s,p}}{\sqrt{2\pi}} \big(T_1(s)+T_2(s)\big),\quad s\in (0,1),\nonumber
    \end{align}
where, for every $s>0$, 
$$
T_1(s)=\sum_{k=1}^m\sum_{j=0}^{k-1}\int_R^\infty \int_\rho ^\infty  \frac{(\sinh \rho )^{2m-1}\sinh r}{\sqrt{\sinh (\frac{r+\rho}{2})\sinh (\frac{r-\rho }{2})}}\frac{Q_j^{m,k}(r)r^j}{(\sinh r)^{2m-k}}\Big(\frac{1}{r}\frac{\partial }{\partial r}\Big)^k\big[r ^{-\frac{1+sp}{2}}K_{\frac{1+sp}{2}}\big(\tfrac{2m-1}{2}r\big)\big]drd\rho,
$$
and
$$
T_2(s)=\sum_{k=1}^m\int_R^\infty \int_\rho ^\infty \frac{(\sinh \rho )^{2m-1}\sinh r}{\sqrt{\sinh (\frac{r+\rho}{2})\sinh (\frac{r-\rho }{2})}}\frac{Q_k^{m,k}(r)r^k}{(\sinh r)^{2m-k}}\Big(\frac{1}{r}\frac{\partial }{\partial r}\Big)^k\big[r ^{-\frac{1+sp}{2}}K_{\frac{1+sp}{2}}\big(\tfrac{2m-1}{2}r\big)\big]drd\rho.
$$
Let us establish that $\lim_{s\rightarrow 0^+}sa_{n,s,p}T_1(s)=0$. Consider $k=1,\ldots, m$, and $j=0,\ldots, k-1$. First observe that, for each $a>0$, there exists $C>0$ such that  
\begin{equation}\label{sinsin}
\sinh \big(\tfrac{r+\rho}{2}\big)\sinh \big(\tfrac{r-\rho }{2}\big)=\frac{e^r}{4}(1-e^{-(r+\rho)})(1-e^{-(r-\rho )})\geq Ce^r(1-e^{-(r-\rho )}), \quad r+\rho >a.
\end{equation}
Then, by virtue of Lemma \ref{key} $(iii)$, \eqref{Kderiv} and \eqref{cotaKinfty}, we have that, there exists $C>0$ such that
\begin{align*}
    \Big|\int_R^\infty \int_\rho ^\infty\frac{(\sinh \rho )^{2m-1}\sinh r}{\sqrt{\sinh (\frac{r+\rho}{2})\sinh (\frac{r-\rho }{2})}}\frac{Q_j^{m,k}(r)r^j}{(\sinh r)^{2m-k}}\Big(\frac{1}{r}\frac{\partial }{\partial r}\Big)^k\big[r ^{-\frac{1+sp}{2}}K_{\frac{1+sp}{2}}\big(\tfrac{2m-1}{2}r\big)\big]drd\rho \Big|&\\
    &\hspace{-12cm}\leq C\int_R^\infty \int_\rho ^\infty \frac{e^{(2m-1)\rho }e^{(\frac{1}{2}-m)r}}{\sqrt{1-e^{-(r-\rho )}}}r^{j-k-\frac{1+sp}{2}}\big|K_{\frac{1+sp}{2}+k}
    \big(\tfrac{2m-1}{2}r\big)\big|drd\rho \\
    &\hspace{-12cm}\leq C\int_R^\infty \int_\rho ^\infty \frac{e^{(2m-1)\rho }e^{(\frac{1}{2}-m)r}}{\sqrt{1-e^{-(r-\rho )}}}r^{j-k-1-\frac{sp}{2}}e^{-(m-\frac{1}{2})r}drd\rho\\
    &\hspace{-12cm}\leq C\int_R^\infty \rho ^{j-k-1-\frac{sp}{2}}e^{(2m-1)\rho }\int_\rho ^\infty \frac{e^{-(2m-1)r}}{\sqrt{1-e^{-(r-\rho )}}}drd\rho\\
    &\hspace{-12cm}=C\int_R^\infty \rho ^{j-k-1-\frac{sp}{2}}\int_0 ^\infty \frac{e^{-(2m-1)t}}{\sqrt{1-e^{-t}}}dtd\rho\leq C\int_R^\infty \rho ^{j-k-1-\frac{sp}{2}}d\rho\int_0 ^\infty \frac{e^{-t}}{\sqrt{1-e^{-t}}}dt \\
    &\hspace{-12cm}=C\frac{R^{j-k-\frac{sp}{2}}}{k-j+\frac{sp}{2}} \leq C,\quad s\in (0,1).
\end{align*}
Thus, we can deduce that 
\begin{equation}\label{sT1}
\lim_{s\rightarrow 0^+}sa_{n,s,p}T_1(s)=0.
\end{equation}

On the other hand, let $k=1,\ldots, m$. We can decompose the integral in $T_2(s)$ as follows:
\begin{align}\label{Jik}
    &\int_R^\infty \int_\rho ^\infty \frac{(\sinh \rho )^{2m-1}\sinh r}{\sqrt{\sinh (\frac{r+\rho}{2})\sinh (\frac{r-\rho }{2})}}\frac{Q_k^{m,k}(r)r^k}{(\sinh r)^{2m-k}}\Big(\frac{1}{r}\frac{\partial }{\partial r}\Big)^k\big[r ^{-\frac{1+sp}{2}}K_{\frac{1+sp}{2}}\big(\tfrac{2m-1}{2}r\big)\big]drd\rho\nonumber\\
    &=(-1)^k\big(\tfrac{2m-1}{2}\big)^k\int_R^\infty \int_\rho ^\infty \frac{(\sinh \rho )^{2m-1}\sinh r}{\sqrt{\sinh (\frac{r+\rho}{2})\sinh (\frac{r-\rho }{2})}}\frac{Q_k^{m,k}(r)}{(\sinh r)^{2m-k}}r^{-\frac{1+sp}{2}}K_{\frac{1+sp}{2}+k}\big(\tfrac{2m-1}{2}r\big)drd\rho\nonumber\\
    &=(-1)^k\big(\tfrac{2m-1}{2}\big)^k\sum_{i=1}^5J_i^{(k)}(s),\quad s\in (0,1),
\end{align}
where, for every $s\in (0,1)$,
\begin{align*}
&J_1^{(k)}(s)=\int_R^\infty \!\!\!\int_\rho ^\infty \frac{(\sinh \rho )^{2m-1}\sinh r}{\sqrt{\sinh (\frac{r+\rho}{2})\sinh (\frac{r-\rho }{2})}}\Big[\frac{Q_k^{m,k}(r)}{(\sinh r)^{2m-k}}-c_k^{m,k}e^{-mr}\Big]r ^{-\frac{1+sp}{2}}K_{\frac{1+sp}{2}+k}\big(\tfrac{2m-1}{2}r\big)drd\rho,\\
&J_2^{(k)}(s)=c_k^{m,k}\int_R^\infty \!\!\!\int_\rho ^\infty\frac{(\sinh \rho )^{2m-1}\sinh r}{\sqrt{\sinh (\frac{r+\rho}{2})\sinh (\frac{r-\rho }{2})}}e^{-mr}r^{-\frac{1+sp}{2}}\Big[K_{\frac{1+sp}{2}+k}\big(\tfrac{2m-1}{2}r\big)-\frac{\sqrt{\pi}e^{-\frac{2m-1}{2}r}}{\sqrt{(2m-1)r}}\Big]drd\rho,\\
&J_3^{(k)}(s)=\frac{c_k^{m,k}\sqrt{\pi}}{\sqrt{2m-1}} \int_R^\infty \!\!\!\int_\rho ^\infty\frac{(\sinh \rho )^{2m-1}}{\sqrt{\sinh (\frac{r+\rho}{2})\sinh (\frac{r-\rho }{2})}}e^{-(2m-\frac{1}{2})r}r^{-1-\frac{sp}{2}}\big[\sinh r-\frac{e^r}{2}\big]drd\rho,\\
&J_4^{(k)}(s)=\frac{c_k^{m,k}\sqrt{\pi}}{2\sqrt{2m-1}}\int_R^\infty \!\!\!\int_\rho ^\infty\frac{e^{-(2m-\frac{3}{2})r}r^{-1-\frac{sp}{2}}}{\sqrt{\sinh (\frac{r+\rho}{2})\sinh (\frac{r-\rho }{2})}}\Big[(\sinh \rho )^{2m-1}-\Big(\frac{e^\rho}{2}\Big)^{2m-1}\Big]drd\rho,
\end{align*}
and
$$
J_5^{(k)}(s)=\frac{c_k^{m,k}\sqrt{\pi}}{2^{2m}\sqrt{2m-1}}\int_R^\infty \!\!\!\int_\rho ^\infty\frac{e^{(2m-1)\rho }e^{-(2m-\frac{3}{2})r}r^{-1-\frac{sp}{2}}}{\sqrt{\sinh (\frac{r+\rho}{2})\sinh (\frac{r-\rho }{2})}}drd\rho.
$$
Let us show that $\lim_{s\rightarrow 0^+}sJ_i^{(k)}(s)=0$, when $i=1,\ldots, 4$. 

Taking into account again \eqref{sinsin} and using Lemma \ref{key} $(ii)$ and \eqref{cotaKinfty} it follows that
\begin{align*}
|J_1^{(k)}(s)|&\leq C\int_R^\infty \int_\rho ^\infty \frac{e^{(2m-1)\rho }e^{-(2m+1)r}}{\sqrt{1-e^{-(r-\rho )}}}r^{-1-\frac{sp}{2}}drd\rho\\
&\leq \frac{C}{R}\int_R^\infty e^{-2\rho }d\rho\int_0 ^\infty \frac{e^{-t}}{\sqrt{1-e^{-t}}}dt\leq C,\quad s\in (0,1).
\end{align*}
Also, according to \eqref{asintotico} we obtain
\begin{align*}
|J_2^{(k)}(s)|&\leq C\int_R^\infty \int_\rho ^\infty \frac{e^{(2m-1)\rho }e^{-(2m-1)r}}{\sqrt{1-e^{-(r-\rho )}}}r^{-2-\frac{sp}{2}}drd\rho\\
&\leq C\int_R^\infty \rho^{-2}d\rho\int_0 ^\infty \frac{e^{-t}}{\sqrt{1-e^{-t}}}dt=\frac{C}{R}\leq C,\quad s\in (0,1).
\end{align*}
By taking into account that, for each $\ell \in \mathbb N$,
$$
\Big|(\sinh z)^\ell -\Big(\frac{e^z}{2}\Big)^\ell \Big|=\Big(\frac{e^z}{2}\Big)^\ell \Big|(1-e^{-2z})^\ell -1\Big|=\Big(\frac{e^z}{2}\Big)^\ell\Big|\sum_{i=1}^\ell (-1)^{i}\binom{\ell }{i}e^{-2iz}\Big|\leq Ce^{(\ell -2)z},\quad z>0,
$$
it follows that
\begin{align*}
|J_3^{(k)}(s)|+|J_4^{(k)}(s)|&\leq C\int_R^\infty \int_\rho ^\infty \frac{e^{(2m-1)\rho }e^{-(2m+1)r}+e^{(2m-3)\rho }e^{-(2m-1)r}}{\sqrt{1-e^{-(r-\rho )}}}r^{-1-\frac{sp}{2}}drd\rho \\
&\leq \frac{C}{R}\int_R^\infty e^{-2\rho }d\rho \int_0 ^\infty \frac{e^{-t}}{\sqrt{1-e^{-t}}}dt\leq C,\quad s\in(0,1).
\end{align*}
The constants $C>0$ in the above estimations does not depend on $s$. Hence, $sJ_i^{(k)}(s)\longrightarrow 0$, as $s\rightarrow 0^+$, for $i=1,\ldots ,4$.

Let us now consider $J_5^{(k)}$. Using the first equality in \eqref{sinsin} and taking into account that 
$$
\int_0^\infty \frac{e^{-(2m-1)t}}{\sqrt{1-e^{-t}}}dt=\frac{\sqrt{\pi}\Gamma (2m-1)}{\Gamma (2m-\frac{1}{2})},
$$
we write
\begin{align}\label{Jik5}
    J_5^{(k)}(s)&=\frac{c_k^{m,k}\sqrt{\pi}}{2^{2m-1}\sqrt{2m-1}}\int_R^\infty\int_\rho ^\infty\frac{e^{(2m-1)\rho }e^{-(2m-1)r}r^{-1-\frac{sp}{2}}}{\sqrt{(1-e^{-(r+\rho)})(1-e^{-(r-\rho )})}}drd\rho\nonumber\\
    &=\frac{c_k^{m,k}\sqrt{\pi}}{2^{2m-1}\sqrt{2m-1}}\int_R^\infty\int_0^\infty\frac{e^{-(2m-1)t}(t+\rho)^{-1-\frac{sp}{2}}}{\sqrt{(1-e^{-(t+2\rho)})(1-e^{-t})}}dtd\rho\nonumber\\
    &=\frac{c_k^{m,k}\sqrt{\pi}}{2^{2m-1}\sqrt{2m-1}}\Big(\int_R^\infty\int_0^\infty\frac{e^{-(2m-1)t}(t+\rho)^{-1-\frac{sp}{2}}}{\sqrt{1-e^{-t}}}\Big(\frac{1}{\sqrt{1-e^{-(t+2\rho})}}-1\Big)dtd\rho\nonumber\\
    &\quad +\int_R^\infty\int_0^\infty\frac{e^{-(2m-1)t}}{\sqrt{1-e^{-t}}}\Big((t+\rho)^{-1-\frac{sp}{2}}-\rho ^{-1-\frac{sp}{2}}\Big)dtd\rho+\frac{\sqrt{\pi}\Gamma (2m-1)}{\Gamma (2m-\frac{1}{2})}\int_R^\infty\rho ^{-1-\frac{sp}{2}}d\rho \Big)\nonumber\\
    &=\frac{c_k^{m,k}\sqrt{\pi}}{2^{2m-1}\sqrt{2m-1}}\sum_{i=1}^3 J_i^{k,5}(s),\quad s\in (0,1).
\end{align}
Since
$$
0\leq \frac{1}{\sqrt{1-e^{-z}}}-1=\frac{e^{-z}}{\sqrt{1-e^{-z}}(1+\sqrt{1-e^{-z}})}\leq Ce^{-z},\quad z>1,
$$
we obtain
$$
|J_1^{k,5}(s)|\leq \frac{C}{R}\int_R^\infty e^{-2\rho }d\rho \int_0^\infty \frac{e^{-2mt}}{\sqrt{1-e^{-t}}}dt\leq C,\quad s\in (0,1).
$$
On the other hand, by using mean value theorem we get that there exists $C>0$ such that
$$
\big|(t+\rho )^{-1-\frac{sp}{2}}-\rho ^{-1-\frac{sp}{2}}\big|\leq Ct\rho ^{-2-\frac{sp}{2}}\leq Ct\rho ^{-2},\quad t>0,\;\rho >1,\mbox{ and }s\in (0,1).
$$
We deduce that
$$
|J_2^{k,5}(s)|\leq C\int_R^\infty \rho ^{-2}d\rho \int_0^\infty \frac{te^{-(2m-1)t}}{\sqrt{1-e^{-t}}}dt\leq \frac{C}{R}\leq C,\quad s\in (0,1).
$$
Finally, observe that
\begin{equation}\label{J3k5}
J_3^{k,5}(s)=\frac{2\sqrt{\pi}}{sp}\frac{\Gamma(2m-1)}{\Gamma (2m-\frac{1}{2})}R^{-\frac{sp}{2}},\quad s\in (0,1).
\end{equation}
With these estimations we can infer
$$
\lim_{s\rightarrow 0^+}sa_{n,s,p}J_5^{(k)}(s)=\frac{c_{n,p}c_k^{m,k}\pi }{2^{2m-2}p\sqrt{2m-1}}\frac{\Gamma (2m-1)}{\Gamma (2m-\frac{1}{2})}.
$$
Hence,
\begin{align}\label{sT2}
\lim_{s\rightarrow 0^+}\frac{(-1)^msa_{n,s,p}}{\sqrt{2\pi}}T_2(s)&=\frac{1}{\sqrt{2\pi}}\sum_{k=1}^m(-1)^{m+k}\big(\tfrac{2m-1}{2}\big)^k\lim_{s\rightarrow 0^+}sa_{n,s,p}J^{(k)}_5(s)\nonumber\\
&=\frac{\sqrt{\pi }c_{n,p}}{2^{2m-3/2}p\sqrt{2m-1}}\frac{\Gamma (2m-1)}{\Gamma (2m-\frac{1}{2})}\sum_{k=1}^m(-1)^{m+k}c_k^{m,k}\big(\tfrac{2m-1}{2}\big)^k.
\end{align}
From \eqref{sT1} and \eqref{sT2} we obtain that, when $n$ is even, \eqref{Anp} holds with
\begin{equation}\label{Aneven}
A_{n,p}=\frac{c_n\sqrt{\pi }c_{n,p}}{2^{n-3/2}p\sqrt{n-1}}\frac{\Gamma (n-1)}{\Gamma (n-\frac{1}{2})}\sum_{k=1}^m|c_k^{m,k}|\big(\tfrac{2m-1}{2}\big)^k,\quad n=2m.
\end{equation}

\section{Proof of Theorem \ref{Th1.2}}\label{S4}

{\it (a)} Let $x\in \mathbb H^n$ and assume that $\supp f\subseteq B(x,R)$, with $R\geq 1$. As in the proof of Theorem \ref{Th1.1} we can write
$(-\Delta_{\mathbb H^n})_p^sf(x)=I_1(s)+I_2(s)$, $s\in (0,\alpha /p')$, where $I_1$ and $I_2$ are the absolutely convergent integrals given by \eqref{I1} and \eqref{I2b}, respectively. Then,
\begin{equation}\label{cocloga}
\frac{(-\Delta_{\mathbb H^n})_p^sf(x)-A_{n,p}\Phi _p(f(x))}{s}=\frac{I_1(s)}{s}+\frac{I_2(s)-A_{n,p}\Phi _p(f(x))}{s},\quad s\in \big(0,\tfrac{\alpha}{p'}\big).
\end{equation}
By using \eqref{K0}, and that $f\in {\rm Lip}_{\rm loc}^\alpha (\mathbb H^n)$ it follows that, when $s\in (0,\tfrac{\alpha}{2p'})$, 
\begin{align*}
\frac{1}{s}|\Phi _p(f(x)-f(y))\mathcal K_{n,s,p}(d_{\mathbb H^n}(x,y))|&\leq Cd_{\mathbb H^n}(x,y)^{\alpha(p-1)-n-sp}\\
&\leq C d_{\mathbb H^n}(x,y)^{\frac{\alpha(p-1)}{2}-n},\quad y\in B(x,R).
\end{align*}
Moreover,
$$
\int_{B(x,R)}d_{\mathbb H^n}(x,y)^{\frac{\alpha(p-1)}{2}-n}dy\leq C\int_0^R\rho ^{\frac{\alpha(p-1)}{2}-n}(\sinh \rho )^{n-1}d\rho \leq C\int_0^R\rho ^{\frac{\alpha(p-1)}{2}-1}d\rho <\infty.
$$
Hence, we can apply the dominated convergence theorem which, jointly \eqref{limKP}, allows us to get
\begin{equation}\label{limI1s}
\lim_{s\rightarrow 0^+}\frac{I_1(s)}{s}=\sigma_{n,p}\int_{B(x,R)}\Phi _p(f(x)-f(y))P_n(d_{\mathbb H^n}(x,y) ,0)dy.
\end{equation}
Next, we analyze the term,
$$
D(s)=\frac{I_2(s)-A_{n,p}\Phi _p(f(x))}{s},\quad s\in \big(0,\tfrac{\alpha}{p'}\big).
$$
To deal with $I_2$ we will consider the decomposition given in the proof of Theorem \ref{Th1.1}. 

\subsection{Case $n=2m+1$, $m\in \mathbb N$}\label{Dodd} We write (see \eqref{S1S2} and \eqref{Iik})
\begin{align}\label{D}
    D(s)&=\frac{\Phi _p(f(x))}{s}\left(\int_{\mathbb H^n\setminus B(x,R)}\mathcal K_{n,s,p}(d_{\mathbb H^n}(x,y))dy-A_{n,p}\right)\\
    &=\Phi _p(f(x))c_n(-1)^ma_{n,s,p}\Big(S_1(s)+S_2(s)-\sum_{k=1}^mI_4^{(k)}(s)\Big)\nonumber\\
    &\quad +\Phi_p(f(x))\frac{c_n(-1)^msa_{n,s,p}\sum_{k=1}^mI_4^{(k)}(s)-A_{n,p}}{s}=:D_1(s)+D_2(s),\quad s\in \big(0,\tfrac{\alpha}{p'}\big).\nonumber
\end{align}
As in the proof of \eqref{sS1}, according to Lemma \ref{key} $(iii)$, \eqref{Kderiv}, and \eqref{cotaKinfty}, we have, for every $k=1,\ldots, m$, and $j=0,\ldots, k-1$, 
\begin{align*}
\Big| \frac{Q_j^{m,k}(\rho )\rho ^j}{(\sinh \rho )^{2m-k}}\Big(\frac{1}{\rho }\frac{\partial }{\partial \rho }\Big)^k\big[\rho ^{-\frac{1+sp}{2}}K_{\frac{1+sp}{2}}(m\rho )\big](\sinh \rho )^{2m}\Big|&\leq C\rho ^{j-k-1},\quad \rho \geq 1,\;s\in (0,1).
\end{align*}
Moreover, considering the manipulations made in \eqref{Iik}, and in estimations \eqref{sI1k}, \eqref{sI2k} and \eqref{sI3k} we get, for every $k=1,\ldots, m$, 
\begin{align*}
    \Big|\frac{Q_k^{m,k}(\rho )\rho ^k}{(\sinh \rho )^{2m-k}}\Big(\frac{1}{\rho }\frac{\partial }{\partial \rho }\Big)^k\big[\rho ^{-\frac{1+sp}{2}}K_{\frac{1+sp}{2}}(m\rho )\big](\sinh \rho )^{2m}-(-1)^km^kc_k^{m,k}\frac{\sqrt{\pi}}{2^{2m}\sqrt{2m}}\rho ^{-1-\frac{sp}{2}}\Big|&\\
    &\hspace{-10cm}\leq C(e^{-2\rho}+\rho ^{-2}),\quad \rho \geq 1,\quad s\in (0,1).
\end{align*}
Hence, we can apply the dominated convergence theorem to obtain that, for certain $\alpha_{n,p}^{(1)}\in \mathbb R$,
$$
\lim_{s\rightarrow 0^+}D_1(s)=\alpha_{n,p}^{(1)}\Phi _p(f(x)).
$$
On the other hand, from \eqref{I4k} and \eqref{Anpodd} we have that, for certain $\alpha_{n,p}^{(2)}$ and $\beta_{n,p}\in \mathbb R$,
\begin{align*}
\lim_{s\rightarrow 0^+}D_2(s)&=\Phi_p(f(x))\frac{c_n\sqrt{2\pi}}{2^{2m}\sqrt{m}p}\sum_{k=1}^m|c_k^{m,k}|m^k\lim_{s\rightarrow 0^+}\frac{a_{n,s,p}R^{-\frac{sp}{2}}-c_{n,p}}{s}\nonumber\\
&=\Phi_p(f(x))(\alpha_{n,p}^{(2)}+\beta_{n,p}\log R).
\end{align*}
Hence, there exist $\alpha_{n,p},\beta_{n,p}\in \mathbb R$ such that
$$
\lim_{s\rightarrow 0^+}D(s)=(\alpha_{n,p}+\beta_{n,p}\log R)\Phi_p(f(x)),
$$
that, jointly \eqref{cocloga} and \eqref{limI1s}, allows us to finish the proof of {\it (a)} when $n\in \mathbb N$ is odd, and $n\geq 3$.   

\subsection{Case $n=2m$, $m\in \mathbb N$}\label{Deven} We now can write (see \eqref{S1S2even}, \eqref{Jik}, and \eqref{Jik5})
\begin{align*}
    D(s)&=\Phi _p(f(x))\frac{c_n(-1)^ma_{n,s,p}}{\sqrt{2\pi}}\Big[T_1(s)+T_2(s)-\sum_{k=1}^m(-1)^k\big(\tfrac{2m-1}{2}\big)^k\frac{c_k^{m,k}\sqrt{\pi }}{2^{2m-1}\sqrt{2m-1}}J_3^{k,5}(s)\Big]\\
    &\quad +\frac{\Phi_p(f(x))}{s}\Big[\frac{c_nsa_{n,s,p}}{2^{2m-1/2}\sqrt{2m-1}}\sum_{k=1}^m|c_k^{m,k}|\big(\tfrac{2m-1}{2}\big)^kJ_3^{k,5}(s)-A_{n,p}\Big]\\
    &=:\mathfrak D_1(s)+\mathfrak D_2(s),\quad s\in \big(0,\tfrac{\alpha}{p'}\big).
\end{align*}
By taking into account the manipulations followed to get the estimations for $T_1$ in the proof of Theorem \ref{Th1.1} (see section \ref{Seven}) we get, for every $k=1,\ldots, m$, and $j=0,\ldots, k-1$, 
\begin{align*}
\left|\frac{(\sinh \rho )^{2m-1}\sinh r}{\sqrt{\sinh (\frac{r+\rho}{2})\sinh (\frac{r-\rho }{2})}}\frac{Q_j^{m,k}(r)r^j}{(\sinh r)^{2m-k}}\Big(\frac{1}{r}\frac{\partial }{\partial r}\Big)^k\big[r ^{-\frac{1+sp}{2}}K_{\frac{1+sp}{2}}\big(\tfrac{2m-1}{2}r\big)\big]\right|&\\
&\hspace{-7cm}\leq C \rho^{j-k-1}\frac{e^{(2m-1)\rho }e^{-(2m-1)r}}{\sqrt{1-e^{-(r-\rho )}}}\leq \frac{C}{\rho ^2}\frac{e^{-(r-\rho )}}{\sqrt{1+e^{-(r-\rho )}}},\quad 1\leq \rho \leq r,\;s\in (0,1),
\end{align*}
and, since 
$$
\int_R^\infty\int_\rho^\infty \frac{1}{\rho ^2}\frac{e^{-(r-\rho)}}{\sqrt{1-e^{-(r-\rho )}}}drd\rho=\int_R^\infty \frac{1}{\rho ^2}d\rho\int_0^\infty \frac{e^{-t}}{\sqrt{1-e^{-t}}}dt<\infty,
$$
the dominated convergence theorem leads to, 
$$
\lim_{s\rightarrow 0^+}a_{n,s,p}T_1(s)=\alpha_{n,p}^{(1)},
$$
for certain $\alpha_{n,p}^{(1)}\in \mathbb R$. On the other hand, taking into account the arguments in section \ref{Seven} concerning the estimations for $J_i^{(k)}$, $i=1,\ldots, 4$, and for $J_i^{k,5}$, $i=1,2$, we get
\begin{align*}
T_2(s)-\sum_{k=1}^m (-1)^k\big(\tfrac{2m-1}{2}\big)^k\frac{c_k^{m,k}\sqrt{\pi }}{2^{2m-1}\sqrt{2m-1}}J_3^{k,5}(s)\\
&\hspace{-6cm}=\sum_{k=1}^m (-1)^k\big(\tfrac{2m-1}{2}\big)^k\Big[\sum_{i=1}^4 J_i^{(k)}(s)+\frac{c_k^{m,k}\sqrt{\pi }}{2^{2m-1}\sqrt{2m-1}}\big(J_1^{k,5}(s)+J_2^{k,5}(s)\big)\Big]\\
&\hspace{-6cm}=\int_R^\infty \int_\rho ^\infty H(r,\rho )drd\rho,
\end{align*}
for certain $H(r,\rho )$, $1\leq \rho \leq r$, which satisfies
\begin{align*}
|H(r,\rho)|&\leq C\frac{e^{(2m-1)\rho }e^{-(2m-1)r}}{\sqrt{1-e^{-(r-\rho )}}}\Big(\frac{e^{-2r}}{r}+\frac{1}{r^2}+\frac{e^{-2\rho}}{r}+\frac{e^{-(r+\rho)}}{r}+\frac{r-\rho}{\rho^2}\Big)\\
&\leq C\frac{e^{-(r-\rho)}}{\sqrt{1-e^{-(r-\rho )}}}\Big(e^{-2\rho}+\frac{1+r-\rho}{\rho^2}\Big),\quad 1\leq \rho \leq r,\;s\in (0,1).
\end{align*}
We have that
\begin{align*}
    \int_R^\infty\int_\rho ^\infty \frac{e^{-(r-\rho)}}{\sqrt{1-e^{-(r-\rho )}}}\Big(e^{-2\rho}+\frac{1+r-\rho}{\rho^2}\Big) drd\rho&\\
    &\hspace{-5cm}=\int_R^\infty e^{-2\rho}d\rho\int_0^\infty \frac{e^{-t}}{\sqrt{1-e^{-t}}}dt+\int_R^\infty\frac{d\rho}{\rho^2}\int_0^\infty \frac{(1+t)e^{-t}}{\sqrt{1-e^{-t}}}dt<\infty.
    &\hspace{-7cm}
\end{align*}
Hence, using again the dominated convergence theorem we deduce that, there exists $\widehat{\alpha} _{n,p}^{(1)}\in \mathbb R$ such that
$$
\lim_{s\rightarrow 0^+}\mathfrak D_1(s)=\widehat\alpha_{n,p}^{(1)}\Phi_p(f(x)).
$$
Meanwhile, from \eqref{J3k5} and \eqref{Aneven}, we get
\begin{align*}
   \lim_{s\rightarrow 0^+}\mathfrak D_2(s)&=\Phi_p(f(x))\frac{c_n\sqrt{\pi}\Gamma (n-1)}{2^{n-3/2}p\sqrt{n-1}\Gamma (n-\frac{1}{2})}\sum_{k=1}^m|c_k^{m,k}|\big(\tfrac{2m-1}{2}\big)^k\lim_{s\rightarrow 0^+}\frac{a_{n,s,p}R^{-\frac{sp}{2}}-c_{n,p}}{s}\\
   &=(\widehat{\alpha}_{n,p}^{(2)}+\widehat{\beta}_{n,p}\log R)\Phi_p(f(x)),
\end{align*}
for certain $\widehat{\alpha}_{n,p}^{(2)},\widehat{\beta}_{n,p}\in \mathbb R$. Thus, we have established property {\it (a)} when $n$ is even. \qed
\\

{\it (b)} Let $x\in \mathbb H^n$ and $R\geq 1$ such that $\supp f\subseteq B(x,R)$. On this occasion we decompose $(-\Delta_{\mathbb H^n})_p^sf(x)$, $s\in (0,\tfrac{\alpha}{p'})$, as follows:
\begin{align*}
(-\Delta_{\mathbb H^n})_p^sf(x)&=\int_{B(x,1)}\Phi_p(f(x)-f(y))\mathcal K_{n,s,p}(d_{\mathbb H^n}(x,y))dy\\
&\quad +\int_{\mathbb H^n\setminus B(x,1)}\big(\Phi_p(f(x)-f(y))-\Phi_p(f(x))\big)\mathcal K_{n,s,p}(d_{\mathbb H^n}(x,y))dy\\
&\quad +\Phi_p(f(x))\int_{\mathbb H^n\setminus B(x,1)}\mathcal K_{n,s,p}(d_{\mathbb H^n}(x,y))dy\\
&=:\sum_{i=1}^3H_i(s),\quad s\in \big(0,\tfrac{\alpha }{p'}\big).
\end{align*}
Thus,
\begin{equation}\label{coclogb}
\frac{(-\Delta_{\mathbb H^n})_p^sf(x)-A_{n,p}\Phi _p(f(x))}{s}=\frac{H_1(s)}{s}+\frac{H_2(s)}{s}+\frac{H_3(s)-A_{n,p}\Phi _p(f(x))}{s},\quad s\in \big(0,\tfrac{\alpha}{p'}\big).
\end{equation}
By proceeding as in the part {\it (a)} with $R=1$ (see \eqref{limI1s}), we get
\begin{equation}\label{H1s}
\lim_{s\rightarrow 0^+}\frac{H_1(s)}{s}=\sigma_{n,p}\int_{B(x,1)}\Phi_p(f(x)-f(y))P_n(d_{\mathbb H^n}(x,y),0)dy.
\end{equation}
According to \cite[Lemmas 2 and 3]{Lin}
\begin{equation}\label{FLin}
\big|\Phi_p(f(x)-f(y))-\Phi_p(f(x))\big|
\leq \left\{\begin{array}{ll}
            (3^{p-1}+2^{p-1})|f(y)|^{p-1},&\mbox{ when }1<p<2,\\[0.2cm]
            (p-1)|f(y)|(|f(x)|+|f(y)|)^{p-2},&\mbox{ when }p\geq 2.
            \end{array}
\right.
\end{equation}
Then, taking into account that $f$ is bounded in $\mathbb H^n$, and \eqref{Kinfty}, we can deduce that
\begin{align*}
\frac{1}{s}\big|[\Phi_p(f(x)-f(y))-\Phi_p(f(x))]\mathcal K_{n,s,p}(d_{\mathbb H^n}(x,y))&\big|\\
&\hspace{-6cm}\leq C(d_{\mathbb H^n}(x,y))^{-1-\frac{sp}{2}}e^{-(n-1)d_{\mathbb H^n}(x,y)}\leq C\frac{e^{-(n-1)d_{\mathbb H^n}(x,y)}}{d_{\mathbb H^n}(x,y)},\quad s\in (0,1),\,y\in \mathbb H^n\setminus B(x,1).
\end{align*}
Since
$$
\int_{\mathbb B(x,R)\setminus B(x,1)}\frac{e^{-(n-1)d_{\mathbb H^n}(x,y)}}{d_{\mathbb H^n}(x,y)}dy\leq C\int_1^R \frac{e^{-(n-1)\rho }(\sinh \rho )^{n-1}}{\rho }d\rho\leq C\int_1^R \frac{d\rho }{\rho }<\infty, 
$$
we can rely on dominated convergence theorem and use \eqref{limKP} to obtain that
\begin{equation}\label{H2s}
\lim_{s\rightarrow 0^+}\frac{H_2(s)}{s}=\sigma_{n,p}\int_{\mathbb H^n\setminus B(x,1)}\big(\Phi_p(f(x)-f(y))-\Phi _p(f(x))\big)P_n(d_{\mathbb H^n}(x,y),0)dy.
\end{equation}
Finally, following the analysis of $D(s)$ (for $R=1$) made in the proof of {\it (a)} (sections \ref{Dodd} and \ref{Deven}) we find a constant $\alpha _{n,p}\in \mathbb R$ such that
\begin{equation}\label{H3s}
\lim_{s\rightarrow 0^+}\frac{H_3(s)-A_{n,p}\Phi_p(f(x))}{s}=\alpha_{n,p}\Phi _p(f(x)).
\end{equation}
Combining \eqref{coclogb}, \eqref{H1s}, \eqref{H2s}, and \eqref{H3s} we finish the proof.
\qed

\section{Proof of Theorem \ref{Th1.3}}\label{S5}
We decompose $E_n(\Phi_p(f(x)-f(\cdot))-\Phi _p(f(x)))(x,t)$, $x\in \mathbb H^n$ and $t\in (0,1)$, as follows
\begin{align}\label{Epn}
 E_n\big(\Phi_p(f(x)-f(\cdot))-\Phi _p(f(x))\big)(x,t)&\nonumber\\
 &\hspace{-6cm}=\left(\int_{\mathbb H^n\setminus B(x,1)}+ \int_{B(x,1)\setminus B(x,t)}+\int_{B(x,t)}\right)\big(\Phi _p(f(x)-f(y))-\Phi _p(f(x))\big)P_n(d_{\mathbb H^n}(x,y),t)dy\nonumber\\
 &\hspace{-5cm}=J_1(x,t)+J_2(x,t)+J_3(x,t),\quad x\in \mathbb H^n\mbox{ and }t\in (0,1).
\end{align}
Let $x\in \mathbb H^n$, and $R\geq 1$ such that ${\rm supp }f\subseteq B(x,R)$. Using \eqref{formderiv} and Lemma \ref{FadiBruno} we get
$$
P_n(\rho ,t)=\hspace{-0.2cm}\sum_{(k, s_1,\ldots,s_k,\ell )\in \Omega_m}\hspace{-0.8cm}c_{k,s_1,\ldots, s_k,\ell}^{(m)}\frac{T_{m,k}(\rho )}{(\sinh \rho )^{2m-k}}\frac{e^{-m\sqrt{\rho ^2+t^2}}}{(\rho ^2+t^2)^{k+\frac{1}{2}-\frac{1}{2}(\sum_{j=1}^ks_j-\ell )}},\quad \rho >0,\,t\geq 0,
$$
if $n=2m+1$, $m\in \mathbb N$, and 
\begin{align*}
P_n(\rho ,t)&=\hspace{-0.2cm}\sum_{(k, s_1,\ldots,s_k,\ell )\in \Omega_m}\hspace{-0.8cm}\widetilde{c}_{k,s_1,\ldots, s_k,\ell}^{(m)}\\
&\quad \times \int_\rho ^\infty \frac{\sinh r}{\sqrt{\cosh r-\cosh \rho }}\frac{T_{m,k}(r)}{(\sinh r)^{2m-k}}\frac{e^{-\frac{2m-1}{2}\sqrt{r^2+t^2}}}{(r^2+t^2)^{k+\frac{1}{2}-\frac{1}{2}(\sum_{j=1}^ks_j-\ell )}}dr,\quad \rho >0,\,t\geq 0,
\end{align*}
 when $n=2m$, $m\in \mathbb N$. Here $c_{k,s_1,\ldots,s_k,\ell}^{(m)},\,\widetilde{c}_{k,s_1,\ldots, s_k,\ell}^{(m)}\in \mathbb R$, for each $(k,s_1,\ldots, s_k,\ell )\in \Omega_m$, being 
$$
 \Omega_m=\Big\{(k,s_1,\ldots, s_k,\ell)\in (\mathbb N\cup\{0\})^{k+2}:k=1,\ldots, m,\,\sum_{j=1}^kjs_j=k,\,\ell =0,\ldots, \sum_{j=1}^ks_j\Big\}.
$$
To simplify the notation, for each $(k,s_1,\ldots,s_k,\ell)\in \Omega _m$, we define
$$
\mathbb T_{k,s_1,\ldots, s_k,\ell}(u,t):=\frac{T_{m,k}(u)}{(\sinh u)^{2m-k}} \frac{e^{-m\sqrt{u^2+t^2}}}{(u^2+t^2)^{k+\frac{1}{2}-\frac{1}{2}(\sum_{j=1}^ks_j-\ell )}},\quad u>0,t\geq 0.
$$
Thus, for each $\rho >0$ and $t\geq 0$,
\begin{equation}\label{Pnodd}
P_n(\rho ,t)=\hspace{-0.2cm}\sum_{(k, s_1,\ldots,s_k,\ell )\in \Omega_m}\hspace{-0.8cm}c_{k,s_1,\ldots, s_k,\ell}^{(m)}\mathbb T_{k,s_1,\ldots, s_k,\ell}(\rho, t),
\end{equation}
if $n=2m+1$, $m\in \mathbb N$, and 
\begin{align}\label{Pneven}
P_n(\rho ,t)&=\hspace{-0.2cm}\sum_{(k, s_1,\ldots,s_k,\ell )\in \Omega_m}\hspace{-0.8cm}\widetilde{c}_{k,s_1,\ldots, s_k,\ell}^{(m)}\int_\rho ^\infty \frac{\sinh r}{\sqrt{\cosh r-\cosh \rho }}e^{\frac{\sqrt{r^2+t^2}}{2}}\mathbb T_{k,s_1,\ldots, s_k,\ell}(r,t)dr,
\end{align}
 when $n=2m$, $m\in \mathbb N$.
 
Also, we will represent by $F_p$ the function defined by
$$
F_p(\rho ,w)=(\Phi _p(f(x)-f(y))-\Phi _p(f(x)))_{|y=\tau _x z,\,z=(\cosh \rho ,w\sinh \rho )}, \quad \rho >0,\,w\in S^{n-1}.
$$ 
Here $\tau _x\in {\rm SO}(1,n)$ is such that $\tau_xe_0=x$.

\noindent Finally, we consider the set of multi-indices $\Lambda_m$, $m\in \mathbb N$, given by 
$$
\Lambda_m=\Big\{(k,s_1,\ldots,s_m,\ell)\in \Omega_m: k=m,\mbox{ and }\ell =\sum_{j=1}^ms_j\Big\},
$$ 
which is needed to deal with $J_2$ and $J_3$.
 
\subsection{Analysis of $J_1$}
Applying Lemma \ref{formderiv} $(iii)$ we obtain, for each $m\in \mathbb N$, 
$$
|P_{2m+1}(\rho ,t)|\leq C\sum_{k=1}^m\frac{\rho ^ke^{-2m\rho }}{(\rho ^2+t^2)^{(k+1)/2}}\leq C\frac{e^{-2m\rho}}{\rho },\quad \rho \geq 1, \,t\geq 0,
$$
and, taking into account also \eqref{sinsin}, it follows that 
\begin{align*}
|P_{2m}(\rho, t)|&\leq C\sum_{k=1}^m\int_\rho ^\infty \frac{\sinh r}{\sqrt{\cosh r-\cosh \rho }}\frac{r^ke^{-(2m-\frac{1}{2})r }}{(r^2+t^2)^{(k+1)/2}}dr\\
&\leq C\int_\rho ^\infty \frac{e^{-(2m-1)r}}{\sqrt{1-e^{-(r-\rho )}}}\frac{dr}{r}\leq C\frac{e^{-(2m-1)\rho }}{\rho}\int_0^\infty \frac{e^{-(2m-1)u}}{\sqrt{1-e^{-u}}}du\\
&\leq C\frac{e^{-(2m-1)\rho }}{\rho},\quad \rho \geq 1, \,t\geq 0.
\end{align*}
We have that
\begin{equation}\label{cotaPn}
|P_n(\rho ,t)|\leq C\frac{e^{-(n-1)\rho}}{\rho },\quad \rho \geq 1, \,t\geq 0. 
\end{equation}
and 
$$
\int_1^R\frac{e^{-(n-1)\rho}(\sinh \rho)^{n-1}}{\rho }d\rho <\infty .
$$ 
Since $f$ is bounded in $\mathbb H^n$, by using \eqref{FLin}, the dominated convergence theorem leads to 
\begin{equation}\label{J1}
\lim_{t\rightarrow 0^+}J_1(x,t)=\int_{\mathbb H^n\setminus B(x,1)}\big(\Phi _p(f(x)-f(y))-\Phi _p(f(x))\big)P_n(d_{\mathbb H^n}(x,y),0)dy.
\end{equation}
\qed

\subsection{Analysis of $J_3$} Our objective is to show that for certain $\mu_n\in \mathbb R$,
\begin{equation}\label{J3}
\lim_{t\rightarrow 0^+}J_3(x,t)=\mu_n \Phi _p(f(x)).
\end{equation}
Consider first $n=2m+1$, $m\in \mathbb N$. Taking into account \eqref{Pnodd} we write
\begin{align*}
J_3(x,t)&=\int_0^t\int_{S^{n-1}}F_p(\rho ,w)P_n(\rho ,t)(\sinh \rho )^{2m}d\sigma (w)d\rho\\
&=\sum_{(k,s_1,\ldots,s_k,\ell)\in \Omega_m}c_{k,s_1,\ldots, s_k,\ell}^{(m)}\int_0^t\int_{S^{n-1}}F_p(\rho , w)\mathbb T_{k,s_1,\ldots,s_k,\ell}(\rho, t)(\sinh \rho )^{2m}d\sigma(w)d\rho\\
&=J_{3,1}^{\rm o}(x,t)+J_{3,2}^{\rm o}(x,t),\quad t\in (0,1),
\end{align*}
where
\begin{align*}
J_{3,1}^{\rm o}(x,t)&=\sum_{(k,s_1,\ldots,s_k,\ell)\in \Omega_m\setminus \Lambda _m}c_{k,s_1,\ldots, s_k,\ell}^{(m)}\\
&\quad \times\int_0^t\int_{S^{n-1}}F_p(\rho , w)\mathbb T_{k,s_1,\ldots,s_k,\ell}(\rho, t)(\sinh \rho )^{2m}d\sigma(w)d\rho,\quad t\in (0,1),
\end{align*}
and $J_{3,2}^{\rm o}(x,t)$, $t\in (0,1)$, denotes the corresponding term obtained by restricting the summation to the multi-indices belonging to $\Lambda _m$. 

Observe that, according to Lemma \ref{formderiv} $(i)$, for $(k,s_1,\ldots, s_k,\ell )\in \Omega_m\setminus \Lambda _m$, we have that
\begin{align}\label{cotabbT}
    |\mathbb T_{k,s_1,\ldots, s_k,\ell}(\rho,t)|&\leq \frac{C}{(\rho ^2+t^2)^{k+\frac{1}{2}-\frac{1}{2}(\sum_{j=1}^ks_j-\ell)}}=C\frac{(\rho ^2+t^2)^{m-k+\frac{1}{2}(\sum_{j=1}^ks_j-\ell)-\frac{1}{2}}}{(\rho ^2+t^2)^m}\nonumber\\
    &\leq \frac{C}{(\rho +t)^{2m}}, \quad \rho, t\in (0,1),
\end{align}
where we have used that $m-k+\frac{1}{2}(\sum_{j=1}^ks_j-\ell)-\frac{1}{2}\geq 0$, when $(k,s_1,\ldots, s_k,\ell )\in \Omega_m\setminus \Lambda _m$.

Taking into account \eqref{FLin} we deduce that
$$
|J_{3,1}^{\rm o}(x,t)|\leq C\int_0^t\frac{\rho ^{2m}}{(\rho +t)^{2m}}d\rho\leq Ct,\quad t\in (0,1),
$$
and thus,
\begin{equation}\label{J31odd}
\lim_{t\rightarrow 0^+}J_{3,1}^{\rm o}(x,t)=0.
\end{equation}
On the other hand, if $(k,s_1,\ldots, s_k,\ell)\in \Lambda _m$ we have that 
\begin{align*}
    \mathbb T_{k,s_1,\ldots, s_k,\ell}(\rho,t)(\sinh \rho)^{2m}&=\frac{\rho ^m(\sinh \rho)^me^{-m\sqrt{\rho ^2+t^2}}}{(\rho ^2+t^2)^{m+\frac{1}{2}}}\\
    &=\Big(\frac{\rho ^m(\sinh \rho)^me^{-m\sqrt{\rho ^2+t^2}}}{(\rho ^2+t^2)^{m+\frac{1}{2}}}-\frac{\rho ^{2m}}{(\rho ^2+t^2)^{m+\frac{1}{2}}}\Big)+\frac{\rho ^{2m}}{(\rho ^2+t^2)^{m+\frac{1}{2}}}\\
    &=:H_1(\rho, t)+H_2(\rho ,t),\quad \rho,t\in (0,1),
\end{align*}
and, therefore
$$
J_{3,2}^{\rm o}(x,t)=A_m\int_0^t\int_{S^{n-1}}F_p(\rho ,w)(H_1(\rho ,t)+H_2(\rho, t))d\sigma (w)d\rho,\quad t\in (0,1),
$$
where $A_m=\sum_{(k,s_1,\ldots,s_k,\ell)\in \Lambda_m}c_{k,s_1,\ldots, s_k,\ell}^{(m)}$.
Let us show that 
\begin{equation}\label{H1}
\lim_{t\rightarrow 0^+}\int_0^t\int_{S^{n-1}}F_p(\rho, w)H_1(\rho , t)d\sigma(w)d\rho=0.
\end{equation}
Indeed, since
\begin{align*}
    |H_1(\rho ,t)|&\leq \frac{\rho ^m\big[(\sinh \rho )^m|e^{-m\sqrt{\rho ^2+t^2}}-e^{-m\rho}|+e^{-m\rho }|(\sinh \rho )^m-\rho ^m|+\rho ^m|e^{-m\rho}-1|\big]}{(\rho ^2+t^2)^{m+\frac{1}{2}}}\\
    &\leq C\frac{\rho ^me^{-m\rho }\big[\rho ^m|\sqrt{\rho ^2+t^2}-\rho|+\rho^{m-1}|\sinh \rho -\rho |\big]+\rho^{2m+1}}{(\rho ^2+t^2)^{m+\frac{1}{2}}}\\
    &\leq C\frac{\rho^{2m}}{(\rho +t)^{2m+1}}\Big(\frac{t^2}{\sqrt{\rho ^2+t^2}+\rho}+\rho \Big)\leq C\frac{\rho^{2m}}{(\rho +t)^{2m}}\leq C,\quad \rho,t\in (0,1),
\end{align*}
again \eqref{FLin} leads to 
$$
\Big|\int_0^t\int_{S^{n-1}}F_p(\rho ,w)H_1(\rho ,t)d\sigma(w)d\rho\Big|\leq C\int_0^td\rho=Ct,\quad t\in (0,1),
$$
and hence \eqref{H1} is established.

On the other hand, by using the differentiation theorem we get
\begin{align*}
\lim_{t\rightarrow 0^+}\int_0^t\int_{S^{n-1}}F_p(\rho ,w)H_2(\rho ,t)d\sigma (w)d\rho&=\lim_{t\rightarrow 0^+}\frac{1}{t}\int_0^t\int_{S^{n-1}}F_p(\rho ,w)d\sigma (w)\frac{(\rho/t)^{2m}}{((\rho /t)^2+1)^{m+\frac{1}{2}}}d\rho\\
&\hspace{-3cm}=c_n\int_0^1\frac{z^{2m}}{(z^2+1)^{m+1/2}}dz\Big(\Phi_p(f(x)-f(\tau_xe_0))-\Phi_p(f(x))\Big)\\
&\hspace{-3cm}=-c_n\int_0^1\frac{z^{2m}}{(z^2+1)^{m+1/2}}dz\Phi _p(f(x))=:\gamma_n\Phi_p(f(x)).
\end{align*}
We obtain that
$$
\lim_{t\rightarrow 0^+}J_{3,2}^{\rm o}(x,t)=\gamma_nA_m\Phi _p(f(x)),
$$
which, jointly \eqref{J31odd}, leads to \eqref{J3} for $\mu_n =\gamma_nA_m$, when $n$ is odd.

Assume now that $n=2m$, with $m\in \mathbb N$. We consider
\begin{align}\label{PnPn1Pn2}
P_n(\rho ,t)&=\Big(\int_\rho ^1+\int_1^\infty \Big)\frac{\sinh r}{\sqrt{\cosh r-\cosh\rho }}\Big(-\frac{1}{\sinh r}\frac{\partial }{\partial r}\Big)^{\frac{n}{2}}\big(\mathbb K_{n}(\sqrt{r^2+t^2})\big)dr\nonumber\\
&=P_{n,1}(\rho, t)+P_{n,2}(\rho ,t),\quad \rho \in (0,1),\,t\geq 0,
\end{align}
and decompose $J_3$ as follows:
\begin{align*}
J_3(x,t)&=\int_0^t\int_{S^{n-1}}F_p(\rho ,w)(\sinh \rho )^{2m-1}\big(P_{n,1}(\rho, t)+P_{n,2}(\rho ,t)\big)d\sigma(w)d\rho\\
&=:J_{3,1}^{\rm e}(x,t)+J_{3,2}^{\rm e}(x,t),\quad t\in (0,1).
\end{align*}
Proceeding as in the estimation \eqref{cotaPn} for $n$ even we get
$$
|J_{3,2}^{\rm e}(x,t)|\leq C\int_0^t\int_{S^{n-1}}|F_p(\rho , w)|\rho ^{2m-2}e^{-(2m-1)\rho}d\sigma (w)d\rho,\quad t\in (0,1).
$$
Then, since $f$ is bounded in $\mathbb H^n$ by virtue of \eqref{FLin} we deduce that
$$
|J_{3,2}^{\rm e}(x,t)|\leq C\int_0^t\rho ^{2m-2}d\rho =Ct^{2m-1},\quad t\in (0,1),
$$
and hence,
\begin{equation}\label{J32even}
\lim_{t\rightarrow 0^+}J_{3,2}^{\rm e}(x,t)=0.
\end{equation}
Using \eqref{Pneven} we decompose $J_{3,1}^{\rm e}$ in the following way 
\begin{align*}
J_{3,1}^{\rm e}(x,t)&=\Big(\sum_{(k,s_1,\ldots,s_k,\ell)\in \Omega_m\setminus \Lambda _m}+\sum_{(k,s_1,\ldots, s_k,\ell)\in \Lambda _m}\Big)\widetilde{c}_{k,s_1,\ldots, s_k,\ell}^{(m)}\int_0^t\int_{S^{n-1}}F_p(\rho , w)d\sigma(w)\\
&\quad \times\int_\rho ^1\frac{\sinh r}{\sqrt{\cosh r -\cosh \rho}}e^{\frac{\sqrt{r^2+t^2}}{2}}\mathbb T_{k,s_1,\ldots,s_k,\ell}(r, t)dr(\sinh \rho )^{2m-1}d\rho\\
&=:J_{3,1,1}^{\rm e}(x,t)+J_{3,1,2}^{\rm e}(x,t),\quad t\in (0,1).
\end{align*}

From \eqref{cotabbT} and the boundedness of $f$ we deduce that
\begin{align}\label{J311even}
    |J_{3,1,1}^{\rm e}(x,t)|&\leq C\int_0^t\int_\rho ^1\frac{\sinh r}{\sqrt{\sinh (\frac{r+\rho}{2})\sinh(\frac{r-\rho}{2})}}\frac{dr}{(r+t)^{2m}}(\sinh \rho )^{2m-1}d\rho\nonumber\\
    &\leq C\int_0^t\rho ^{2m-1}\int_\rho ^1\frac{r}{\sqrt{(r+\rho)(r-\rho)}(r+t)^{2m}}drd\rho\leq C\int_0^t\frac{\rho ^{2m-1}}{\sqrt{\rho}(\rho +t)^{2m-1}}\int_0^{1-\rho}\frac{du}{\sqrt{u}}d\rho\nonumber\\
    &\leq C\int_0^t\frac{d\rho}{\sqrt{\rho}}\int_0^1\frac{du}{\sqrt{u}}\leq C\sqrt{t},\quad t\in (0,1).
\end{align}
Thus, it follows that
$$
\lim_{t\rightarrow 0^+}J_{3,1,1}^{\rm e}(x,t)=0.
$$
Assume next that $(k,s_1,\ldots, s_k,\ell )\in \Lambda _m$.  We can write
\begin{align}\label{K1K2}
(\sinh \rho )^{2m-1}\int_\rho ^1\frac{\sinh r}{\sqrt{\cosh r -\cosh \rho}}e^{\frac{\sqrt{r^2+t^2}}{2}}\mathbb T_{k,s_1,\ldots,s_k,\ell}(r, t)dr\nonumber\\
&\hspace{-8cm}=\left((\sinh \rho )^{2m-1}\int_\rho ^1\frac{\sinh r}{\sqrt{\cosh r -\cosh \rho}}\frac{r^me^{-(m-\frac{1}{2})\sqrt{r^2+t^2}}}{(\sinh r)^m(r^2+t^2)^{m+\frac{1}{2}}}dr\right.\nonumber\\
&\hspace{-8cm}\quad \left. -\sqrt{2}\rho ^{2m-1}\int_\rho ^\infty \frac{r}{\sqrt{r^2-\rho ^2}(r^2+t^2)^{m+\frac{1}{2}}}dr\right)+\sqrt{2}\rho ^{2m-1}\int_\rho ^\infty \frac{r}{\sqrt{r^2-\rho ^2}(r^2+t^2)^{m+\frac{1}{2}}}dr\nonumber\\
&\hspace{-8cm}=:R_1(\rho , t)+ R_2(\rho , t),\quad \rho ,t\in (0,1),
\end{align}
and thus,
$$
J_{3,1,2}^{\rm e}(x,t)=\widetilde{A}_m\int_0^t\int_{S^{n-1}}F_p(\rho ,w)d\sigma (w)(R_1(\rho , t)+R_2(\rho ,t ))d\rho,\quad t\in (0,1).
$$
Here $\widetilde{A}_m=\sum_{(k,s_1,\ldots,s_k,\ell)\in \Lambda_m}\widetilde{c}_{k,s_1,\ldots, s_k,\ell}^{(m)}$. We have that
\begin{align*}
    |R_1(\rho ,t)|&\leq (\sinh \rho )^{2m-1}\int_\rho ^1\frac{\sinh r}{\sqrt{\cosh r -\cosh \rho}}\frac{r^m}{(\sinh r)^m(r^2+t^2)^{m+\frac{1}{2}}}\big|e^{-(m-\frac{1}{2})\sqrt{r^2+t^2}}-1\big|dr\\
    &\quad +|(\sinh \rho )^{2m-1}-\rho ^{2m-1}|\int_\rho ^1\frac{\sinh r}{\sqrt{\cosh r -\cosh \rho}}\frac{r^m}{(\sinh r)^m(r^2+t^2)^{m+\frac{1}{2}}}dr\\
    &\quad +\rho ^{2m-1}\int_\rho ^1\frac{|\sinh r-r|}{\sqrt{\cosh r -\cosh \rho}}\frac{r^m}{(\sinh r)^m(r^2+t^2)^{m+\frac{1}{2}}}dr\\
    &\quad +\rho ^{2m-1}\int_\rho ^1\Big|\frac{1}{\sqrt{\cosh r -\cosh \rho}}-\frac{\sqrt{2}}{\sqrt{r^2-\rho ^2}}\Big|\frac{r^{m+1}}{(\sinh r)^m(r^2+t^2)^{m+\frac{1}{2}}}dr\\
    &\quad +\sqrt{2}\rho ^{2m-1}\int_\rho ^1\Big|\frac{r^m}{(\sinh r)^m}-1{}\Big|\frac{r}{\sqrt{r^2-\rho ^2}(r^2+t^2)^{m+\frac{1}{2}}}dr\\
    &\quad +\sqrt{2}\rho ^{2m-1}\int_1^\infty \frac{r}{\sqrt{r^2-\rho ^2}(r^2+t^2)^{m+\frac{1}{2}}}dr,\quad \rho,t\in (0,1).
\end{align*}
Taking into account that $|1-e^{-z}|\leq Cz$, $|(\sinh z)^\ell -z^\ell |\leq Cz^{\ell +2}$, $z\in (0,1)$ and $\ell \in \mathbb N$, and that
\begin{align*}
\Big|\frac{1}{\sqrt{\sinh z \sinh w}}-\frac{1}{\sqrt{zw}}\Big|&=\frac{|zw-\sinh z\sinh w|}{\sqrt{zw\sinh z\sinh w}(\sqrt{zw}+\sqrt{\sinh z\sinh w})}\\
&\hspace{-2cm}\leq C\frac{z|w-\sinh w|+w|z-\sinh z|}{(zw)^{3/2}}\leq C\Big(\frac{w^{3/2}}{\sqrt{z}}+\frac{z^{3/2}}{\sqrt{w}}\Big)\leq C\frac{w^2+z^2}{\sqrt{zw}},\quad z,w\in (0,1),
\end{align*}
we get
\begin{align}\label{cotaK1}
    |R_1(\rho ,t)|&\leq C\rho ^{2m-1}\left(\int_\rho ^1\frac{r}{\sqrt{r^2-\rho ^2}(r^2+t^2)^m}dr+\rho ^2\int_\rho ^1\frac{r}{\sqrt{r^2-\rho ^2}(r^2+t^2)^{m+\frac{1}{2}}}dr\right.\nonumber\\
    &\quad +\int_\rho ^1\frac{r^3}{\sqrt{r^2-\rho ^2}(r^2+t^2)^{m+\frac{1}{2}}}dr+\int_\rho ^1\frac{(r-\rho)^2+(r+\rho)^2}{\sqrt{r^2-\rho^2}}\frac{r}{(r^2+t^2)^{m+\frac{1}{2}}}dr\nonumber\\
    &\quad \left.+\int_1^\infty \frac{r}{\sqrt{r^2-\rho ^2}(r^2+t^2)^{m+\frac{1}{2}}}dr\right)\nonumber\\
    &\leq C\frac{\rho ^{2m-1}}{\sqrt{\rho }}\left(\Big(\frac{1}{(\rho +t)^{2m-1}}+\frac{1}{(\rho +t)^{2m-2}}\Big)\int_\rho ^1\frac{dr}{\sqrt{r-\rho}}+\int_1^\infty \frac{dr}{r^{2m}}\right)\nonumber\\
    &\leq C\frac{\rho ^{2m-3/2}}{(\rho +t)^{2m-1}}\leq \frac{C}{\sqrt{\rho }},\quad \rho\in \big(0,\tfrac{1}{2}\big),\,t\in (0,1).
\end{align}
Then, 
$$
\Big|\int_0^t\int_{S^{n-1}}F_p(\rho ,w)d\sigma (w)R_1(\rho ,t)d\rho\Big|\leq C\int_0^t\frac{d\rho}{\sqrt{\rho }}=C\sqrt{t},\quad t\in \big(0,\tfrac{1}{2}\big),
$$
and consequently,
\begin{align*}
\lim_{t\rightarrow 0^+}J_{3,1}^{\rm e}(x,t)&=\lim_{t\rightarrow 0^+}J_{3,1,2}^{\rm e}(x,t)=\widetilde{A}_m\lim_{t\rightarrow 0^+}\int_0^t\int_{S^{n-1}}F_p(\rho ,w)d\sigma (w)R_2(\rho ,t )d\rho\\
&\hspace{-1cm}=\sqrt{2}\widetilde{A}_m\lim_{t\rightarrow 0^+}\int_0^t\int_{S^{n-1}}F_p(\rho ,w)d\sigma (w)\int_\rho ^\infty \frac{r}{\sqrt{r^2-\rho ^2}(r^2+t^2)^{m+\frac{1}{2}}}dr\rho ^{2m-1}d\rho\\
&\hspace{-1cm}=\sqrt{2}\widetilde{A}_m\lim_{t\rightarrow 0^+}\frac{1}{t}\int_0^t\int_{S^{n-1}}F_p(\rho, w)d\sigma (w)\int_{\rho/t}^\infty \frac{u}{\sqrt{u^2-(\rho/t)^2}(u^2+1)^{m+\frac{1}{2}}}du\Big(\frac{\rho}{t}\Big)^{2m-1}d\rho\\
&\hspace{-1cm}=\sqrt{2}\widetilde{A}_m\lim_{t\rightarrow 0^+}\int_0^1\int_{S^{n-1}}F_p(tz,w)d\sigma (w)\int_z^\infty \frac{u}{\sqrt{u^2-z^2}(u^2+1)^{m+\frac{1}{2}}}duz^{2m-1}dz.
\end{align*}
Since
$$
\int_0^1\int_z^\infty \frac{u}{\sqrt{u^2-z^2}(u^2+1)^{m+\frac{1}{2}}}duz^{2m-1}dz=:\eta_n<\infty,
$$
the dominated convergence theorem leads to
$$
\lim_{t\rightarrow 0^+}J_{3,1}^{\rm e}(x,t)=-\sqrt{2}\widetilde{A}_mc_n\eta_n\Phi_p(f(x)),
$$
which, jointly \eqref{J32even}, allow us to conclude that \eqref{J3} holds for $\mu_n=-\sqrt{2}\widetilde{A}_mc_n\eta_n$, when $n$ is even.
\qed

\subsection{Analysis of $J_2$} We are going to show that for certain $\theta_n$ and $\gamma_n \in \mathbb R$, 
\begin{equation}\label{J2}
\lim_{t\rightarrow 0^+}\big(J_2(x,t)+\gamma_n \Phi _p(f(x))\log t\big)=\int_{B(x,1)}\Phi _p(f(x)-f(y))P_n(d_{\mathbb H^n}(x,y),0)dy+\theta_n\Phi _p(f(x)).
\end{equation}
First observe that, since $f\in {\rm Lip}_{\rm loc}^\alpha (\mathbb H^n)$ we can write
\begin{align}\label{PhipLip}
    |G_p(\rho, w)|&:=|\Phi _p(f(x)-f(y))|_{|y=\tau_x z,\,z=(\cosh \rho,w\sinh \rho)}=|f(x)-f(y)|^{p-1}_{|y=\tau_x z,\,z=(\cosh \rho,w\sinh \rho)}\nonumber\\
    &\leq Cd_{\mathbb H^n}(\tau _xe_0,\tau _x z)_{|z=(\cosh \rho,w\sinh \rho)}^{\alpha (p-1)}\leq C\rho ^{\alpha (p-1)},\quad \rho \in (0,1),\,w\in S^{n-1}.
\end{align}

Let us assume now that $n=2m+1$, $m\in \mathbb N$, and decompose $J_2$ as follows: 
\begin{align}\label{J2decomp}
J_2(x,t)&=\int_{B(x,1)\setminus B(x,t)}\Phi _p(f(x)-f(y))P_n(d_{\mathbb H^n}(x,y),t)dy\nonumber\\
&\quad -\Phi_p(f(x))\int_{B(x,1)\setminus B(x,t)}P_n(d_{\mathbb H^n}(x,y),t)dy\nonumber\\
&=:J_{2,1}^{\rm o}(x,t)-\Phi _p(f(x))J_{2,2}^{\rm o}(x,t),\quad t\in (0,1).
\end{align}By \eqref{Pnodd} and Lemma \ref{formderiv} $(i)$ we have that
$$
|P_n(\rho ,t)|\leq \frac{C}{(\rho^2+t^2)^{m+\frac{1}{2}}},\quad \rho, t\in (0,1).
$$
Hence, 
$$
|G_p(\rho,w)P_n(\rho , t)(\sinh\rho )^{2m}|\leq C\rho ^{\alpha (p-1)-1},\quad \rho, t\in (0,1),
$$
and the dominated convergence theorem leads to
\begin{equation}\label{J21}
\lim_{t\rightarrow 0^+}J_{2,1}^{\rm o}(x,t)=\int_{B(x,1)}\Phi _p(f(x)-f(y))P_n(d_{\mathbb H^n}(x,y),0)dy.    
\end{equation}
On the other hand, from \eqref{Pnodd} and, as in the analysis of $J_3$, we consider the decomposition
\begin{align}\label{J22decom}
J_{2,2}^{\rm o}(x,t)&=c_n\int_t^1P_n(\rho ,t)(\sinh \rho )^{2m}d\rho\nonumber\\
&=c_n\Big(\sum_{(k,s_1,\ldots,s_k,\ell)\in \Omega_m\setminus \Lambda _m}+\sum_{(k,s_1,\ldots,s_k,\ell)\in \Lambda _m}\Big)c_{k,s_1,\ldots, s_k,\ell}^{(m)}\int_t^1\mathbb T_{k,s_1,\ldots,s_k,\ell}(\rho, t)(\sinh \rho )^{2m}d\rho\nonumber\\
&=J_{2,2,1}^{\rm o}(x,t)+J_{2,2,2}^{\rm o}(x,t),\quad t\in (0,1).
\end{align}
According to \eqref{cotabbT}, for every $(k,s_1,\ldots, s_k,\ell)\in \Omega_m\setminus \Lambda _m$,
$$
|\mathbb T_{k,s_1,\ldots, s_k,\ell}(\rho, t)(\sinh \rho )^{2m}|\leq C\frac{\rho ^{2m}}{(\rho +t)^{2m}}\leq C,\quad \rho, t\in (0,1).
$$
Then, dominated convergence theorem allows us to obtain that, for certain $\delta_n^{(1)}\in \mathbb R$, 
\begin{equation}\label{J221odd}
\lim_{t\rightarrow 0^+}J_{2,2,1}^{\rm o}(x,t)=\delta_n^{(1)}.
\end{equation}
To deal with $J_{2,2,2}^{\rm o}$ we write
\begin{align*}
    J_{2,2,2}^{\rm o}(x,t)&=c_nA_m\int_t^1\frac{\rho ^m(\sinh \rho )^me^{-m\sqrt{\rho ^2+t^2}}}{(\rho ^2+t^2)^{m+\frac{1}{2}}}d\rho\\
    &=c_nA_m\int_t^1\Big(\frac{\rho ^m(\sinh \rho )^me^{-m\sqrt{\rho ^2+t^2}}}{(\rho ^2+t^2)^{m+\frac{1}{2}}}-\frac{\rho ^{2m}}{(\rho ^2+t^2)^{m+\frac{1}{2}}}\Big)d\rho+c_nA_m\psi (t),\quad t\in (0,1),
\end{align*}
where $A_m=\sum_{(k,s_1,\ldots,s_k,\ell)\in \Lambda_m}c_{k,s_1,\ldots, s_k,\ell}^{(m)}$, and $\psi$ is the function defined in Lemma \ref{psivarphi}.
We have that
\begin{align*}
\left|\frac{\rho ^m(\sinh \rho )^me^{-m\sqrt{\rho ^2+t^2}}}{(\rho ^2+t^2)^{m+\frac{1}{2}}}-\frac{\rho ^{2m}}{(\rho ^2+t^2)^{m+\frac{1}{2}}}\right|&\\
&\hspace{-5cm}\leq \frac{\rho ^m}{(\rho ^2+t^2)^{m+\frac{1}{2}}}\Big((\sinh \rho )^m|e^{-m\sqrt{\rho ^2+t^2}}-1|+|(\sinh \rho )^m-\rho ^m|\Big)\\
&\hspace{-5cm}\leq \frac{C}{\rho ^{m+1}}(\rho ^m\sqrt{\rho ^2+t^2}+\rho ^{m+2})\leq C,\quad 0<t<\rho<1.
\end{align*}
Again the dominated convergence theorem assures that there exists $\delta_n^{(2)}\in \mathbb R$ such that
\begin{equation}\label{J222odd}
\lim_{t\rightarrow 0^+}(J_{2,2,2}^{\rm o}(x,t)-c_nA_m\psi (t))=\delta_n^{(2)}.
\end{equation}
From \eqref{J2decomp} and \eqref{J22decom} we can write
\begin{align*}
    J_2(x,t)-c_nA_m\Phi _p(f(x))\log t&=J_{2,1}^{\rm o}(x,t)-\Phi _p(f(x)\Big(J_{2,2,1}^{\rm o}(x,t)+J_{2,2,2}^{\rm o}(x,t)-c_nA_m\psi (t)\\
    &\quad +c_nA_m(\psi (t)+\log t)\Big),\quad t\in (0,1).
\end{align*}
Combining \eqref{psi}, \eqref{J21}, \eqref{J221odd}, and \eqref{J222odd}, we conclude that, when $n$ is odd, \eqref{J2} holds for $\gamma_n =-c_nA_m$, and $\theta_n =-(\delta_n^{(1)}+\delta_n^{(2)}+c_nA_m\alpha _m)$ (here $\alpha _m$ is the constant appearing in Lemma \ref{psivarphi}).

Suppose now that $n=2m$, $m\in \mathbb N$. We write
\begin{align}\label{J2even}
J_2(x,t)&=\int_{B(x,1)\setminus B(x,t)}\Phi_p(f(x)-f(y))(P_{n,1}(d_{\mathbb H^n}(x,y),t)
+P_{n,2}(d_{\mathbb H^n}(x,y),t)\big)dy\nonumber\\
&\quad -\Phi _p(f(x))\int_{B(x,1)\setminus B(x,t)}(P_{n,1}(d_{\mathbb H^n}(x,y),t)
+P_{n,2}(d_{\mathbb H^n}(x,y),t)\big)dy\nonumber\\
&=:J_{2,1}^{\rm e}(x,t)+J_{2,2}^{\rm e}(x,t)-\Phi _p(f(x))\big(J_{2,3}^{\rm e}(x,t)+J_{2,4}^{\rm e}(x,t)\big),\quad t\in (0,1),
\end{align}
where $P_{n,1}$ and $P_{n,2}$ are given by \eqref{PnPn1Pn2}. By proceeding as in \eqref{cotaPn} in the even case, we get 
$$
|P_{n,2}(\rho , t)|\leq C\int_1^\infty \frac{e^{-(2m-1)r}}{\sqrt{1-e^{-(r-\rho )}}}\frac{dr}{r}\leq C\frac{e^{-(2m-1)\rho}}{\rho}\int_0^\infty\frac{e^{-(2m-1)u}}{\sqrt{1-e^{-u}}}du\leq \frac{C}{\rho},\quad \rho \in (0,1).
$$
Taking into account \eqref{PhipLip} and that 
$$
\int_0^1\rho ^{\alpha (p-1)-1}(\sinh\rho )^{2m-1}d\rho +\int_0^1(\sinh \rho )^{2m-1}\frac{d\rho}{\rho}<\infty,
$$
applying the dominated convergence theorem we deduce that
\begin{align}\label{J22J24even}
\lim_{t\rightarrow 0^+}\big(J_{2,2}^{\rm e}(x,t)-\Phi _p(f(x))J_{2,4}^{\rm e}(x,t)\big)&\nonumber\\
&\hspace{-4cm}=\int_{B(x,1)}\Phi_p(f(x)-f(y))P_{n,2}(d_{\mathbb H^n}(x,y),0)dy-\eta_n^{(1)}\Phi _p(f(x)),
\end{align}
where
$$
\eta_n^{(1)}=\int_{B(x,1)}P_{n,2}(d_{\mathbb H^n}(x,y),0)dy<\infty .
$$
Note that $\eta_n^{(1)}$ does not depend on $x$. Now, considering \eqref{Pneven} we write
\begin{align*}
J_{2,1}^{\rm e}(x,t)&=\sum_{(k,s_1,\ldots,s_k,\ell)\in \Omega_m}\widetilde{c}_{k,s_1,\ldots, s_k,\ell}^{(m)}\int_t^1\int_{S^{n-1}}G_p(\rho , w)d\sigma(w)\\
&\quad \times\int_\rho ^1\frac{\sinh r}{\sqrt{\cosh r -\cosh \rho}}e^{\frac{\sqrt{r^2+t^2}}{2}}\mathbb T_{k,s_1,\ldots,s_k,\ell}(r, t)dr(\sinh \rho )^{2m-1}d\rho,\quad t\in (0,1).
\end{align*}
By \eqref{PhipLip} and proceeding as in \eqref{J311even}, for every $(k,s_1,\ldots,s_k,\ell)\in \Omega_m\setminus\Lambda _m$ and  $t, \rho \in (0,1)$,
\begin{equation}\label{rho12}
\Big|\int_{S^{n-1}}G_p(\rho , w)d\sigma(w)\int_\rho ^1\frac{\sinh r}{\sqrt{\cosh r -\cosh \rho}}e^{\frac{\sqrt{r^2+t^2}}{2}}\mathbb T_{k,s_1,\ldots,s_k,\ell}(r, t)dr(\sinh \rho )^{2m-1}\Big|\leq C\rho^{\alpha (p-1)-\frac{1}{2}}.
\end{equation}
When $(k,s_1,\ldots,s_k,\ell)\in \Lambda _m$ we obtain
\begin{align*}
    \Big|\int_{S^{n-1}}G_p(\rho , w)d\sigma(w)\int_\rho ^1\frac{\sinh r}{\sqrt{\cosh r -\cosh \rho}}e^{\frac{\sqrt{r^2+t^2}}{2}}\mathbb T_{k,s_1,\ldots,s_k,\ell}(r, t)dr(\sinh \rho )^{2m-1}\Big|&\\
    &\hspace{-11cm}\leq C\rho^{\alpha (p-1)+2m-1}\int_\rho ^1\frac{r}{\sqrt{r^2-\rho ^2}(r^2+t^2)^{m+\frac{1}{2}}}dr\leq C\frac{\rho ^{\alpha (p-1)+2m-1}}{(\rho +t)^{2m}}\int_\rho ^1\frac{dr}{\sqrt{r+\rho}\sqrt{r-\rho}}\\
    &\hspace{-11cm}\leq C\frac{\rho ^{\alpha (p-1)+2m-1}}{(\rho +t)^{2m}}\int_\rho ^1\frac{dr}{(r+\rho)^{\frac{1}{2}-\varepsilon}(r-\rho)^{\frac{1}{2}+\varepsilon}}\leq C\rho^{\alpha(p-1)+\varepsilon-\frac{3}{2}}\int_0^1\frac{du}{u^{\frac{1}
    {2}+\varepsilon}}\\
    &\hspace{-11cm}\leq C\rho^{\alpha (p-1)+\varepsilon-\frac{3}{2}},\quad t,\,\rho \in (0,1), \,\varepsilon \in (0,\tfrac{1}{2}).
\end{align*}
Since $\alpha>0$ and $p>1$, we can choose $\varepsilon \in (0,\tfrac{1}{2})$ such that $\alpha(p-1)>\tfrac{1}{2}-\varepsilon$. Then, the dominated convergence theorem allows us to obtain
\begin{equation}\label{J21even}
    \lim_{t\rightarrow 0^+}J_{2,1}^{\rm e}(x,t)=\int_{B(x,1)}\Phi _p(f(x)-f(y))P_{n,1}(d_{\mathbb H^n}(x,y),0)dy.
\end{equation}
We use again \eqref{Pneven} to decompose $J_{2,3}^{\rm e}$ as follows. 
\begin{align}\label{J23decom}
J_{2,3}^{\rm e}(x,t)&=c_n\Big(\sum_{(k,s_1,\ldots,s_k,\ell)\in \Omega_m\setminus \Lambda _m}+\sum_{(k,s_1,\ldots, s_k,\ell)\in \Lambda _m}\Big)\widetilde{c}_{k,s_1,\ldots, s_k,\ell}^{(m)}\nonumber\\
&\quad \times\int_t^1\int_\rho ^1\frac{\sinh r}{\sqrt{\cosh r -\cosh \rho}}e^{\frac{\sqrt{r^2+t^2}}{2}}\mathbb T_{k,s_1,\ldots,s_k,\ell}(r, t)dr(\sinh \rho )^{2m-1}d\rho\nonumber\\
&=:J_{2,3,1}^{\rm e}(x,t)+J_{2,3,2}^{\rm e}(x,t),\quad t\in (0,1).
\end{align}
As in \eqref{rho12} it follows that, for $(k,s_1,\ldots, s_k,\ell)\in \Omega_m \setminus \Lambda _m$,
$$
\Big|\int_\rho ^1\frac{\sinh r}{\sqrt{\cosh r -\cosh \rho}}e^{\frac{\sqrt{r^2+t^2}}{2}}\mathbb T_{k,s_1,\ldots,s_k,\ell}(r, t)dr(\sinh \rho )^{2m-1}\Big|\leq \frac{C}{\sqrt{\rho}},\quad t,\,\rho \in (0,1).
$$
Hence, for certain $\eta_n^{(2)}\in \mathbb R$, 
\begin{equation}\label{J231even}
\lim_{t\rightarrow 0^+}J_{2,3,1}^{\rm e}(x,t)=\eta_n^{(2)}.
\end{equation}
On the other hand,
$$
J_{2,3,2}^{\rm e}(x,t)=c_n\widetilde{A}_m\int_t^1\big(R_1(\rho, t)+R_2(\rho ,t)\big)d\rho=c_n\widetilde{A}_m\int_t^1R_1(\rho, t)d\rho +\sqrt{2}c_n\widetilde{A}_m\varphi (t),\quad t\in (0,1),
$$
being $\widetilde{A}_m=\sum_{(k,s_1,\ldots,s_k,\ell)\in \Lambda_m}\widetilde{c}_{k,s_1,\ldots, s_k,\ell}^{(m)}$, $R_1$ and $R_2$ as in \eqref{K1K2}, and $\varphi$ the function defined in Lemma \ref{psivarphi}. Then, taking into account \eqref{varphi}, and \eqref{cotaK1} we get
\begin{equation}\label{J232even}
\lim_{t\rightarrow 0^+}\big(J_{2,3,2}^{\rm e}(x,t)+\sqrt{2}c_n\widetilde{A}_m\sigma_m\log t\big)=\eta_n^{(3)}+\sqrt{2}c_n\widetilde{A}_m\lim_{t\rightarrow 0^+}(\varphi (t)+\sigma_m\log t)=\eta_n^{(3)}+\sqrt{2}c_n\widetilde{A}_m\beta_m,
\end{equation}
for certain $\eta_n^{(3)}\in \mathbb R$, and where $\sigma _m$, $\beta_m$ are the constants appearing in Lemma \ref{psivarphi}.

Putting together \eqref{J2even}, \eqref{J22J24even}, \eqref{J21even}, \eqref{J23decom}, \eqref{J231even}, and \eqref{J232even} we obtain that, in the case that $n$ is even, \eqref{J2} is satisfied with $\gamma_n=-\sqrt{2}c_n\widetilde{A}_m\sigma _m$, and $\theta_n=-(\eta_n^{(1)}+\eta_n^{(2)}+\eta_n^{(3)}+\sqrt{2}c_n\widetilde{A}_m\beta_m)$.

\qed

Finally, taking into account Theorem \ref{Th1.2} $(b)$, the decomposition \eqref{Epn} and properties \eqref{J1}, \eqref{J3}, and \eqref{J2} we can finish the proof.

\section{Proof of Theorem \ref{Th1.4}}\label{S6}
Suppose that $f\in C_c^\alpha (\mathbb R^n)$ for some $\alpha >0$. In \cite[Theorem 1.1]{DJF} it was established that
\begin{align}\label{LogDeltap}
    (\log (-\Delta)_p)f(x)&=\eta_{n,p}\int_{B(x,1)}\frac{\Phi _p(f(x)-f(y))}{|x-y|^n}dy\nonumber\\
    &\quad +\eta_{n,p}\int_{\mathbb R^n\setminus B(x,1)}\frac{\Phi _p(f(x)-f(y))-\Phi _p(f(x))}{|x-y|^n}dy +\delta_{n,p}\Phi _p(f(x)),\quad x\in \mathbb R^n,
\end{align}
where $\eta_{n,p}=p\Gamma (n/2)/(2\pi ^{n/2})=p/c_n$, and $\delta_{n,p}=2\log 2-\gamma+p\psi (n/2)/2$, being $\gamma=-\Gamma'(1)$ the Euler-Mascheroni constant and $\psi =\Gamma '/\Gamma$ the Digamma function.

We decompose ${\mathbb E}_n(\Phi _p(f(x)-f(y))-\Phi _p(f(x)))$, $x\in \mathbb R^n$, as follows:
\begin{align*}
{\mathbb E}_n(\Phi _p(f(x)-f(y))-\Phi _p(f(x)))&\\
&\hspace{-4cm}=\frac{1}{c_n}\left(\int_{\mathbb R^n\setminus B(x,1)}+ \int_{B(x,1)\setminus B(x,t)}+\int_{B(x,t)}\right)\frac{\Phi _p(f(x)-f(y))-\Phi _p(f(x))}{(|x-y|^2+t^2)^{n/2}}dy\\
&\hspace{-4cm}=I_1(x,t)+I_2(x,t)+I_3(x,t),\quad x\in \mathbb R^n, \mbox{ and }t\in (0,1).
\end{align*}
Let $x\in \mathbb R^n$. We take $R>0$ such that ${\rm supp }f\subseteq B(x,R)$. We have that
$$
I_1(x,t)=\frac{1}{c_n}\int_{B(x,R)\setminus B(x,1)}\frac{\Phi _p(f(x)-f(y))-\Phi _p(f(x))}{(|x-y|^2+t^2)^{n/2}}dy,\quad t\in (0,1).
$$
By using dominated convergence theorem we obtain
$$
\lim_{t\rightarrow 0^+}I_1(x,t)=\frac{1}{c_n}\int_{\mathbb R^n\setminus B(x,1)}\frac{\Phi _p(f(x)-f(y))-\Phi _p(f(x))}{|x-y|^n}dy.
$$
On the other hand, we have that
$$
\left|\frac{\Phi _p(f(x)-f(y))}{(|x-y|^2+t^2)^{n/2}}\right|\leq C\frac{|f(x)-f(y)|^{p-1}}{|x-y|^n}\leq C|x-y|^{\alpha (p-1)-n},\quad y\in \mathbb R^n,\,y\not=x,\mbox{ and }t>0,
$$
and the dominated convergence theorem leads to 
$$
\lim_{t\rightarrow 0^+}\int_{\mathbb B(x,1)\setminus B(x,t)}\frac{\Phi _p(f(x)-f(y))}{(|x-y|^2+t^2)^{n/2}}dy=\int_{B(x,1)}\frac{\Phi _p(f(x)-f(y))}{|x-y|^n}dy,
$$
and
$$
\lim_{t\rightarrow 0^+}\int_{\mathbb B(x,t)}\frac{\Phi _p(f(x)-f(y))}{(|x-y|^2+t^2)^{n/2}}dy=0.
$$
Observe also that
$$
\int_{B(x,t)}\frac{dy}{(|x-y|^2+t^2)^{n/2}}=c_n\int_0^t\frac{\rho ^{n-1}}{(\rho ^2+t^2)^{n/2}}d\rho=c_n\int_0^1\frac{u^{n-1}}{(1+u^2)^{n/2}}du,\quad t>0, 
$$
hence
$$
\lim_{t\rightarrow 0^+}I_3(x,t)=-\widetilde{q}_n\Phi_p(f(x)),
$$
where 
$$
\widetilde{q}_n=\int_0^1\frac{u^{n-1}}{(1+u^2)^{n/2}}du.
$$ 
According to \cite[Lemma 4.1]{CHW} we have
$$
\int_{B(x,1)\setminus B(x,t)}\frac{dy}{(|x-y|^2+t^2)^{n/2}}dy=\frac{c_n}{2}\big(-2\log t+q_n+g(t)\big),\quad t>0,
$$
where $\lim_{t\rightarrow 0^+}g(t)=0$, and 
$$
q_n=2\int_1^\infty \Big(\frac{1}{(\rho ^2+1)^{n/2}}-\frac{1}{\rho ^n}\Big)\rho ^{n-1}d\rho.
$$
We deduce that
$$
\lim_{t\rightarrow 0^+}\big(I_2(x,t)-\Phi_p(f(x))\log t\big)=\frac{1}{c_n}\int_{B(x,1)}\frac{\Phi _p(f(x)-f(y))}{|x-y|^n}dy-\frac{q_n}{2}\Phi _p(f(x)).
$$
Thus,  
\begin{align*}
\lim_{t\rightarrow 0^+}\big({\mathbb E}_n(\Phi _p(f(x)-f(\cdot))-\Phi _p(f(x)))-\Phi_p(f(x)\log t)\big)&=-\big(\frac{q_n}{2}+\widetilde{q}_n)\Phi _p(f(x))\\
&\hspace{-8cm}\quad +\frac{1}{c_n}\left(\int_{B(x,1)}\frac{\Phi _p(f(x)-f(y))}{|x-y|^n}dy+\int_{\mathbb R^n\setminus B(x,1)}\frac{\Phi _p(f(x)-f(y))-\Phi _p(f(x))}{|x-y|^n}dy\right).
\end{align*}

Taking into account that $c_n^{-1}=\eta_{n,p}/p$ and using \eqref{LogDeltap} we conclude that the statement in Theorem \ref{Th1.4} holds for $\alpha_{n,p}=\delta_{n,p}+p(\tfrac{q_n}{2}+\widetilde{q}_n)$.


\bibliographystyle{acm}

\end{document}